\theoremstyle{plain}
 \newtheorem{thm}{Theorem}[section]
 \newtheorem{lem}[thm]{Lemma}
 \newtheorem{cor}[thm]{Corollary}
 \newtheorem{prop}[thm]{Proposition}
\theoremstyle{definition}
\theoremstyle{remark}
  \newtheorem{rem}[thm]{Remark}
  \newtheorem{ex}[thm]{Example}
\newcommand{\C}{\mathbb{C}}
\newcommand{\N}{\mathbb{N}}
\newcommand{\Z}{\mathbb{Z}}
\newcommand{\calr}{\mathcal{R}}
\newcommand{\calb}{\mathcal{B}}
\newcommand{\calg}{\mathcal{G}}
\newcommand{\cals}{\mathcal{S}}
\newcommand{\calq}{\mathcal{Q}}
\newcommand{\cale}{\mathcal{E}}
\newcommand{\caln}{\mathcal{N}}
\newcommand{\calm}{\mathcal{M}}
\newcommand{\calu}{\mathcal{U}}
\newcommand{\sfs}{\mathsf{s}}
\newcommand{\ve}{\varepsilon}
\newcommand{\calp}{\mathcal{P}}
\newcommand{\wh}{\widehat}
\newcommand{\tsigma}{\tilde{\sigma}}
\newcommand{\T}{\mathbb{T}}
\newcommand{\aut}{\mathrm{Aut}}
\renewcommand{\c}{\curvearrowright}
\begin{document}

\title[Groups with infinite FC-center have the Schmidt property]{Groups with infinite FC-center have the Schmidt \\ [0.5 ex] property}
\author{Yoshikata Kida}
\address{Graduate School of Mathematical Sciences, the University of Tokyo, Komaba, Tokyo 153-8914, Japan}
\email{kida@ms.u-tokyo.ac.jp}
\author{Robin Tucker-Drob}
\address{Department of Mathematics, Texas A\&M University, College Station, TX 77843, USA}
\email{rtuckerd@math.tamu.edu}
\date{May 13, 2020}
\thanks{The first author was supported by JSPS Grant-in-Aid for Scientific Research, 17K05268.}
\thanks{The second author was supported by NSF Grant DMS 1855825}

\begin{abstract}
We show that every countable group with infinite FC-center has the Schmidt property, i.e., admits a free, ergodic, measure-preserving action on a standard probability space such that the full group of the associated orbit equivalence relation contains a non-trivial central sequence.
As a consequence, every countable, inner amenable group with property (T) has the Schmidt property.
\end{abstract}

\maketitle


\section{Introduction}\label{sec-intro}

Let $G$ be a countable group.
Throughout the paper, we equip each countable group with the discrete topology unless otherwise stated.
We say that $G$ is \textit{inner amenable} if there exists a sequence $(\xi_n)$ of non-negative unit vectors in $\ell^1(G)$ such that for each $g\in G$, we have $\Vert \xi_n^g - \xi_n\Vert_1\to 0$ and $\xi_n(g)\to 0$, where the function $\xi_n^g$ on $G$ is defined by $\xi_n^g(h)=\xi_n(ghg^{-1})$ for $h\in G$.
Inner amenability was introduced by Effros \cite{effros} as a necessary condition for the group von Neumann algebra of $G$ to have property Gamma when $G$ satisfies the ICC condition.
Inner amenability also arises in the context of p.m.p.\ actions of $G$.
For brevity, by a \textit{p.m.p.}\ action of $G$ we mean a measure-preserving action of $G$ on a standard probability space, where ``p.m.p."\ stands for ``probability-measure-preserving".
Let us say that a free ergodic p.m.p.\ action of $G$ is \textit{Schmidt} if the associated orbit equivalence relation admits a non-trivial central sequence in its full group.
We say that $G$ has the \textit{Schmidt property} if $G$ has a free ergodic p.m.p.\ action which is Schmidt.
While the Schmidt property of $G$ implies inner amenability of $G$ (\cite[p.113]{js}), the converse remains an open problem which was first posed by Schmidt \cite[Problem 4.6]{sch-prob}. Recent advances have lead to the resolution of some related long-standing problems concerning the relationship between inner amenability of groups and various kinds of central sequences (\cite{kida-inn} and \cite{vaes}).

If the functions $\xi_n$ witnessing the inner amenability of $G$ are further required to be $G$-conjugation invariant, i.e., they each satisfy $\xi_n^g=\xi_n$ for all $g\in G$, then an algebraic constraint is imposed on $G$.
In fact, the existence of such a sequence $(\xi_n )$ is equivalent to $G$ having infinite FC-center.
The \textit{FC-center} of $G$ is defined as the subgroup of elements $g\in G$ whose centralizer, denoted by $C_G(g)$, is of finite index in $G$.
The FC-center of $G$ is a normal (in fact, characteristic) subgroup of $G$.

In studying the structure of inner amenable groups, the second author \cite{td} introduced the \textit{AC-center} of $G$, which is defined as the subgroup of elements $g\in G$ for which the quotient group $G/\bigcap_{h\in G}hC_G(g)h^{-1}$ is amenable.
The AC-center of $G$ is also a characteristic subgroup of $G$ and contains the FC-center of $G$.
If $G$ has infinite AC-center, then $G$ is inner amenable; this follows from the fact that for each element $g$ in the AC-center of $G$, the conjugation action of $G$ on the conjugacy class of $g$ factors through an action of the amenable group $G/\bigcap_{h\in G}hC_G(g)h^{-1}$.
If $G$ is linear, or more generally fulfills a certain chain condition on its subgroups, then inner amenability of $G$ is equivalent to $G$ having infinite AC-center; in this case, the AC-center plays a crucial role in describing the structure of $G$, and this resulting structure can in turn be used to deduce that $G$ has the Schmidt property (\cite[Theorems 14 and 15]{td}).
However, there are many groups with infinite AC-center or FC-center, but which do not satisfy the relevant chain condition, so that the results of \cite{td} do not apply to these groups. In this paper, we solve Schmidt's problem for them affirmatively:

\begin{thm}\label{thm-fc}
Every countable group with infinite AC-center has the Schmidt property.
\end{thm}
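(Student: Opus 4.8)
\emph{Reduction to a concrete criterion.} The plan is to fix a free ergodic p.m.p.\ action $G\curvearrowright(X,\mu)$ with orbit equivalence relation $\calr$, equip the full group $[\calr]$ with the uniform metric $d(S,T)=\mu(\{x:Sx\neq Tx\})$, and construct a \emph{non-trivial central sequence}: a sequence $(U_n)$ in $[\calr]$ with $d(U_nV,VU_n)\to0$ for every $V\in[\calr]$ while $\inf_n d(U_n,\mathrm{id})>0$. Since $d$ is invariant under left and right translation, the set of $V$ for which $d(U_nV,VU_n)\to0$ is a closed subgroup of $[\calr]$, so it is enough to test asymptotic commutation on a uniformly generating family, and I would use the ``swaps'' $V_{g,A}$ (acting as $g$ on $A$, as $g^{-1}$ on $gA$, and trivially elsewhere, for $g\in G$ and $A\subseteq X$ with $A\cap gA=\emptyset$), which generate $[\calr]$. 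Writing $U_n(x)=\phi_n(x)\,x$ for measurable $\phi_n\colon X\to G$, a direct computation shows that $U_n$ commutes with $S_g\colon x\mapsto gx$ exactly when $\phi_n(gx)=g\phi_n(x)g^{-1}$, i.e.\ when $\phi_n$ is equivariant for the conjugation action of $G$ on its image; and, provided $\phi_n$ is (asymptotically) conjugation-equivariant, $U_n$ asymptotically commutes with every swap $V_{g,A}$ as soon as $U_n\to\mathrm{id}$ in the \emph{weak} topology, i.e.\ $\mu(U_nB\triangle B)\to0$ for all $B$. Thus it suffices to produce $U_n=\phi_n(\cdot)\,\cdot$ in $[\calr]$ that is (i) asymptotically conjugation-equivariant, (ii) weakly convergent to the identity, yet (iii) non-trivial, $\inf_n\mu(\{x:U_nx\neq x\})>0$. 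The prototype is a central element $z$ acting as rotation by an angle $\alpha$ on a circle factor: taking $k_n$ with $k_n\alpha\to0$, the maps $S_{z^{k_n}}$ move almost every point (so are non-trivial) but converge weakly to the identity.

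\emph{Construction from the AC-center.} I would choose distinct elements $c_1,c_2,\dots$ in the AC-center, and for each $n$ set $N_n=\bigcap_{h\in G}hC_G(c_n)h^{-1}$, so that $Q_n:=G/N_n$ is amenable and the conjugation action of $G$ on the conjugacy class $C_n$ of $c_n$ factors through $Q_n$. The transformations $U_n$ are then to be realized through these amenable quotients. Concretely, I would build $X$ as a product $X=Y\times\prod_n M_n$, where $G\curvearrowright(Y,\nu)$ is an auxiliary free, weakly mixing p.m.p.\ action (guaranteeing freeness and ergodicity of the ambient action) and each $M_n$ carries a p.m.p.\ $G$-action factoring through $Q_n$, modelled on a translation action of a compact group into which the relevant orbit in $C_n$ maps with dense image. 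Using a F\o lner sequence in the amenable group $Q_n$, one averages the translate of $c_n$ over a F\o lner set to obtain a measurable $\phi_n\colon X\to C_n$ that is approximately conjugation-equivariant, and tunes the compact model so that $U_n=\phi_n(\cdot)\,\cdot$ is a genuine measure-preserving element of $[\calr]$ that is weakly close to the identity while still displacing a definite proportion of points. When $c_n$ is central this reduces exactly to the rotation prototype above; amenability of $Q_n$ is precisely what lets the same mechanism run when $c_n$ only lies in the AC-center, the F\o lner averaging replacing the exact invariance available in the central case.

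\emph{Verification and the main difficulty.} Freeness and ergodicity come from the factor $Y$; non-triviality is built into the translation models; and asymptotic centrality follows from the criterion of the first paragraph, since $\phi_n$ is asymptotically equivariant and $U_n\to\mathrm{id}$ weakly. The hard part will be the simultaneous fulfilment of (i)--(iii): weak convergence to the identity (needed to commute asymptotically with the set-theoretic swaps) pulls against non-triviality and against the equivariance forced by a non-central conjugacy class, and these demands can be reconciled only through the averaging over $Q_n$. Making this averaging yield a \emph{bona fide} full-group element---measure-preserving and globally defined on $X$---while retaining a uniform lower bound on $\mu(\{x:U_nx\neq x\})$ is the technical heart of the argument. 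A secondary point needing care is that the AC-center may be torsion, infinite-order, or mixed, so the models $M_n$ must be chosen (tori, profinite groups, or products thereof) to admit sequences converging to the identity in every case.
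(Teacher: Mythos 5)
Your first paragraph is fine: the criterion you isolate (exact/asymptotic conjugation-equivariance of $\phi_n$ plus weak convergence of $U_n^\circ$ to the identity implies centrality) is exactly the Jones--Schmidt criterion the paper records in Remark \ref{rem-ai-ac}. The fatal gap is in the claim ``freeness and ergodicity come from the factor $Y$.'' Your $U_n^\circ$ multiplies \emph{every} coordinate of $X=Y\times\prod_k M_k$ --- including the $Y$-coordinate --- by the group element $\phi_n(x)\in C_n$. For $U_n^\circ\to\mathrm{id}$ weakly you therefore need $\nu(\phi_n(x)Y_0\bigtriangleup Y_0)\to 0$ for all Borel $Y_0\subset Y$, i.e.\ the conjugates of $c_n$ must converge weakly to the identity in $\aut(Y,\nu)$ for the free action $Y$. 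If $Y$ is, say, mixing, then instead $\nu(gY_0\cap Y_0)\to\nu(Y_0)^2$ as $g\to\infty$, so $\mu(U_n^\circ A\bigtriangleup A)\not\to 0$ for cylinder sets $A$ over $Y$ and the sequence is not central. Arranging a \emph{free} $G$-action on which the $\phi_n$-values act weakly trivially is not a side condition: when $G$ is residually finite this is Popa--Vaes \cite{pv}, and when it is not (the paper's Examples \ref{ex-ershov}--\ref{ex-caln}) it is essentially the whole problem. The only escape is to take $Y$ to be an action of a quotient on which the values of $\phi_n$ act trivially --- but then freeness of the product action requires the relevant normal subgroup (the FC/AC-center) to act essentially freely on the \emph{other} factor, which is again the original difficulty. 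The paper breaks this circularity by entirely different means: groupoid co-induction from a subgroupoid containing both the isotropy and the central sequence, arranged so the central sequence acts trivially on the co-inducing fiber (Proposition \ref{prop-co-induced}, Theorems \ref{thm-sch-ape} and \ref{thm-sch-per}, with the periodic-approximation Lemmas \ref{lem-de}, \ref{lem-en-qn-ape}); in the ultraproduct construction the product trick of your type does appear (Step 5), but it works only because the maps $S_b$ take values in the FC-center $A$, $W_1$ is a $G/A$-action, and essential freeness of $A\c W_0$ is separately proved in Step 6 --- the verification your proposal has no mechanism for.

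Two further problems. First, ``averaging the translate of $c_n$ over a F\o lner set'' is not a defined operation: one cannot average elements of a discrete conjugacy class and land in that class, and a full-group element requires an actual measurable choice of group elements, not a mean. (Means are how one proves inner amenability; they do not produce elements of $[\calr]$.) Second, for AC-center elements with infinite conjugacy classes your scheme has no workable substitute for this averaging, whereas the paper disposes of that case algebraically (Proposition \ref{prop-alt}): either some finite subset of the AC-center has infinite normal closure, in which case the results of \cite{td} give stability or the Schmidt property directly, or the AC-center coincides with the FC-center, where conjugacy classes are finite and exact equivariance is available via profinite models. Without that reduction, and without a replacement for the co-induction machinery, the proposal does not close.
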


In fact, the Schmidt property for groups with infinite AC-center but finite FC-center follows from the constructions in \cite{td} (see Subsection \ref{subsec-red}).
Thus, most of the proof of Theorem \ref{thm-fc} is devoted to the case of groups with infinite FC-center.

The following corollary is an immediate consequence of Theorem \ref{thm-fc} because every inner amenable group with property (T) has infinite FC-center.

\begin{cor}\label{cor-t}
Every countable, inner amenable group with property (T) has the Schmidt property.
\end{cor}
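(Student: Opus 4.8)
The plan is to deduce the corollary from Theorem \ref{thm-fc} by showing that a countable inner amenable group $G$ with property (T) necessarily has infinite FC-center; since the FC-center is contained in the AC-center, this gives $G$ infinite AC-center, and Theorem \ref{thm-fc} then immediately yields the Schmidt property. Thus the entire content of the corollary reduces to the implication that inner amenability together with property (T) forces the FC-center to be infinite.

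To establish this, I would pass to the conjugation representation $\pi$ of $G$ on $\ell^2(G)$, given by $\pi(g)\delta_h=\delta_{ghg^{-1}}$, and convert the $\ell^1$ vectors witnessing inner amenability into $\ell^2$ almost-invariant vectors for $\pi$. Starting from the sequence $(\xi_n)$ of non-negative unit vectors in $\ell^1(G)$ with $\Vert \xi_n^g-\xi_n\Vert_1\to 0$ and $\xi_n(g)\to 0$, I set $\eta_n:=\sqrt{\xi_n}\in\ell^2(G)$, so that $\Vert\eta_n\Vert_2=1$. The elementary pointwise inequality $(\sqrt{a}-\sqrt{b})^2\le|a-b|$ for $a,b\ge 0$ yields $\Vert\eta_n^g-\eta_n\Vert_2^2\le\Vert\xi_n^g-\xi_n\Vert_1\to 0$ for each $g$, and since $\eta_n^g-\eta_n$ has the same norm as $\pi(g)\eta_n-\eta_n$, the sequence $(\eta_n)$ is almost invariant for $\pi$. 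Crucially, the spreading condition persists: $\eta_n(g)=\sqrt{\xi_n(g)}\to 0$ for every fixed $g$.

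The next step identifies the fixed-point space $\ell^2(G)^{\pi}$. A vector in $\ell^2(G)^{\pi}$ is precisely a square-summable function that is constant on conjugacy classes, and such a function must vanish on every infinite conjugacy class; hence $\ell^2(G)^{\pi}$ is the closed span of the normalized indicators of the finite conjugacy classes, that is, $\ell^2$ of the FC-center. Let $P$ be the orthogonal projection onto $\ell^2(G)^{\pi}$, which commutes with $\pi$. Now I invoke property (T): the restriction of $\pi$ to $(\ell^2(G)^{\pi})^{\perp}$ has no non-zero invariant vector, so by property (T) it admits no almost-invariant unit vectors. Since $(1-P)\eta_n$ lies in this complement and $\Vert\pi(g)(1-P)\eta_n-(1-P)\eta_n\Vert_2\le\Vert\pi(g)\eta_n-\eta_n\Vert_2\to 0$, a normalization argument forces $\Vert(1-P)\eta_n\Vert_2\to 0$, whence $\Vert P\eta_n\Vert_2\to 1$.

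Finally I would reach a contradiction under the assumption that the FC-center is finite. In that case $P\eta_n$ lives in the fixed finite-dimensional space $\ell^2(\mathrm{FC}(G))$, and applying Cauchy--Schwarz over each finite conjugacy class gives $\Vert P\eta_n\Vert_2^2\le\sum_{g\in\mathrm{FC}(G)}|\eta_n(g)|^2$; as $\mathrm{FC}(G)$ is finite and each $\eta_n(g)\to 0$, the right-hand side tends to $0$, contradicting $\Vert P\eta_n\Vert_2\to 1$. Hence the FC-center is infinite and the corollary follows from Theorem \ref{thm-fc}. The only genuinely delicate point is the square-root passage from the $\ell^1$ inner-amenability vectors to $\ell^2$ almost-invariant vectors for the conjugation representation while retaining the spreading condition $\eta_n(g)\to 0$; once this is in place, everything reduces to the defining property of (T) combined with the explicit description of the fixed-point space of the conjugation representation.
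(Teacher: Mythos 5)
Your proposal is correct and follows exactly the paper's route: the paper also deduces the corollary immediately from Theorem \ref{thm-fc} via the fact that a countable, inner amenable group with property (T) has infinite FC-center (and hence infinite AC-center). The only difference is that the paper states this fact without proof, whereas you supply the standard argument for it --- passing from the $\ell^1$ inner-amenability vectors to $\ell^2$ almost-invariant vectors for the conjugation representation, identifying the fixed-point space with $\ell^2$-functions supported on finite conjugacy classes, and using property (T) together with the spreading condition $\eta_n(g)\to 0$ to force the FC-center to be infinite --- and this argument is sound.
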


It is widely known that property (T) is useful for constructing interesting examples regarding the \emph{non-existence} of non-trivial central sequences in various contexts (e.g., \cite{dv}, \cite{kida-inn}, \cite{ktd}, \cite{pv} and \cite{vaes}).
By contrast, Corollary \ref{cor-t} says that there exist no counterexamples to Schmidt's question among groups with property (T).




As mentioned above, the proof of Theorem \ref{thm-fc} is reduced to that for a countable group $G$ with infinite FC-center.
We present two constructions of a free p.m.p.\ Schmidt action of $G$.
The first construction, given throughout Sections \ref{sec-cent}--\ref{sec-com}, stems from analysing central sequences for translation groupoids associated with (not necessarily free) p.m.p.\ actions.
This analysis is of independent interest and yields by-products (Theorems \ref{thm-gm} and \ref{thm-cs-infinity}) which do not follow from the second construction.
The second construction, given in Section \ref{sec-td}, is by way of ultraproducts of p.m.p.\ actions.
While the first construction splits into cases depending on structure of $G$, the second construction does not split into cases and is more direct than the first.


\medskip

\noindent \textbf{A summary of the first construction.}
Let us describe some of the ingredients and by-products of the first construction.
The construction is divided into two cases, depending on whether the FC-center has finite or infinite center. 
Let $G$ be a countable group with infinite FC-center $R$.
If $R$ has finite center $C$, then $G$ admits a (not necessarily free) profinite action $G\c (X, \mu)$ such that the quotient group $R/C$, which is infinite by assumption, acts freely.
This action of $R/C$ leads us to find a central sequence in the full group of the groupoid $G\ltimes (X, \mu)$, similar to a construction of Popa-Vaes \cite{pv} for residually finite groups with infinite FC-center.
We need a further task to conclude that $G$ has the Schmidt property since the action $G\c (X, \mu)$ is not necessarily free.
We will return to this point after discussing the other case.

In the other case, the FC-center of $G$ has infinite center.
The following construction is carried out after choosing some infinite abelian normal subgroup $A$ of $G$ contained in the FC-center of $G$.
The group $A$ is not necessarily the center of the FC-center of $G$.
We set $\Gamma =G/A$ and fix a section of the quotient map from $G$ onto $\Gamma$.
The 2-cocycle $\sigma \colon \Gamma \times \Gamma \to A$ is then associated.
The heart of the construction is to introduce the groupoid extension
\[1\to \calu \to \calg_{\tsigma}\to X\rtimes \Gamma \to 1\]
defined as follows:
For some appropriate compact abelian metrizable group $L$, let $X$ be the group of homomorphisms from $A$ into $L$ and let $\mu$ be the normalized Haar measure on $X$.
The conjugation $\Gamma \c A$ induces the p.m.p.\ action $\Gamma \c (X, \mu)$.
We set $\calu = X\times L$ and regard it as the bundle over $X$ with fiber $L$.
Let $X\rtimes \Gamma$ be the translation groupoid and let $(X\rtimes \Gamma)^{(2)}$ be the set of composable pairs of $X\rtimes \Gamma$.
The 2-cocycle $\tsigma \colon (X\rtimes \Gamma)^{(2)}\to \calu$ is then defined by
\[\tsigma((\tau, g), (g^{-1}\tau, h))=(\tau, \tau(\sigma(g, h)))\]
for $\tau \in X$ and $g, h\in \Gamma$ (see \cite[Theorem 1.1]{jiang} for a related construction).
This 2-cocycle $\tsigma$ associates the groupoid $\calg_{\tsigma}$ that fits into the above exact sequence.
Let $G$ act on $X$ via the quotient map from $G$ onto $\Gamma$.
We then have a natural homomorphism $\eta \colon X\rtimes G\to \calg_{\tsigma}$ such that $\eta(\tau, a)=(\tau,  \tau(a))\in \calu$ for each $\tau \in X$ and $a\in A$.
A crucial point is that if we prepare a free p.m.p.\ action $\calg_{\tsigma}\c (Z, \zeta)$, then we can let $X\rtimes G$ and thus $G$ act on $(Z, \zeta)$ via $\eta$, so that the action of $A$ factors through the action of $\calu$, which is easily handled since $L$ is compact.
Moreover we can describe the stabilizer of a point of $Z$ in $G$ in terms of $\ker \eta$, which is contained in $X\rtimes A$.

Compact groups and their p.m.p.\ actions are utilized in many constructions of Schmidt actions such as in \cite{dv}, \cite{kida-stab}, \cite{kida-srt}, \cite{ktd}, \cite{pv} and \cite{td}.
They are useful on the basis of the following simple fact:
For each p.m.p.\ action $K\c (X, \mu)$ of a continuous (rather than compact) group $K$, each sequence converging to the identity in $K$ also converges to the identity in the automorphism group of $(X, \mu)$ in the weak topology.
This weak convergence is necessary for a sequence in the full group to be central and is also sufficient if the sequence asymptotically commutes with each element of the acting group $G$.

Turning back to the general setup, let $G$ be an arbitrary countable group with infinite FC-center.
Independent of whether the FC-center of $G$ has finite or infinite center, the above construction yields a p.m.p.\ action $G\c (W, \omega)$ and a central sequence $(T_n)$ in the full group of the translation groupoid $G\ltimes (W, \omega)$.
The sequence $(T_n)$ is non-trivial in the sense that the automorphism of $W$ induced by $T_n$ is nowhere the identity.
We cannot yet conclude that $G$ has the Schmidt property because the action $G\c (W, \omega)$ is not necessarily free.

Let us now simplify the setup as follows:
Let $G$ be a countable group with a normal subgroup $M$ and a p.m.p.\ action $G\c (X, \mu)$ such that $M$ acts on $X$ trivially and the quotient group $G/M$ acts on $X$ freely.
Suppose that the groupoid $\calg \coloneqq G\ltimes (X, \mu)$ is Schmidt, i.e., admits a central sequence $(T_n)$ in its full group such that the automorphism of $X$ induced by $T_n$ is nowhere the identity.
Under several additional assumptions, we then construct a \textit{free} p.m.p.\ Schmidt action of $G$ as follows:
After replacing $(T_n)$ by another central sequence appropriately, we obtain the product subgroupoid $M\times \calr < \calg$ such that $\calr$ is the groupoid generated by all $T_n$ and is also principal and hyperfinite.
Pick a free p.m.p.\ action $M\c (Y, \nu)$, let $M\times \calr$ act on $(Y, \nu)$ via the projection from $M\times \calr$ onto $M$, and co-induce the action $\calg \c (Z, \zeta)$ from the action $M\times \calr \c (Y, \nu)$.
Then we have the lift of $(T_n)$ into the translation groupoid $\calg \ltimes (Z, \zeta)$.
This lifted sequence is shown to be central in the full group, by using that $T_n$ acts on $Y$ trivially (see Proposition \ref{prop-co-induced} for treatment of this fact in a more general framework).
Moreover we can naturally define the p.m.p.\ action $G\c (Z, \zeta)$ such that the associated groupoid $G\ltimes (Z, \zeta)$ is identified with $\calg \ltimes (Z, \zeta)$.
The action $G\c (Z, \zeta)$ is free since the action $M\c (Y, \nu)$ is free.
Thus we obtain a free p.m.p.\ Schmidt action of $G$.
This construction is flexible enough to apply to the more general setup, and we are able to deduce the Schmidt property for all groups with infinite FC-center.
It also yields the following by-products: 

\begin{thm}[Corollary \ref{cor-gm}]\label{thm-gm}
Let $G$ be a countable group and $M$ a finite central subgroup of $G$.
Let $G/M\c (X, \mu)$ be a free ergodic p.m.p.\ action and let $G$ act on $(X, \mu)$ through the quotient map from $G$ onto $G/M$.
Suppose that the translation groupoid $G\ltimes (X, \mu)$ is Schmidt.
Then $G$ has the Schmidt property.
\end{thm}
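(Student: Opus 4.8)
The plan is to follow the co-induction scheme sketched above, using crucially that the isotropy of $\calg \coloneqq G\ltimes(X,\mu)$ is everywhere equal to $M$. Indeed, since $M$ acts trivially and $G/M$ acts freely on $(X,\mu)$, the stabilizer in $G$ of $\mu$-a.e.\ $x\in X$ is exactly $M$, so $\calg$ is a groupoid extension of the principal groupoid $(G/M)\ltimes(X,\mu)$ by the constant group bundle $X\times M$ coming from $M\ltimes(X,\mu)$. Let $(T_n)$ be a central sequence in the full group $[\calg]$ witnessing that $\calg$ is Schmidt, so that the automorphism of $X$ induced by each $T_n$ is nowhere the identity.

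First I would extract from $(T_n)$ a principal hyperfinite subgroupoid $\calr<\calg$ together with a product subgroupoid $M\times\calr<\calg$, after replacing $(T_n)$ by a suitably modified central sequence lying in $[M\times\calr]$. Here $\calr$ is generated by the modified $T_n$ and $M\times\calr$ is the subgroupoid whose isotropy bundle is $X\times M$ and whose image in the principal quotient $(G/M)\ltimes(X,\mu)$ is the equivalence relation generated by the induced maps $\bar T_n$; the point is to choose the lift so that the $M$-fibre and the $\calr$-direction genuinely split as a direct product. This is the step I expect to be the main obstacle: the central bundle $X\times M$ is a potential source of a cocycle obstruction to lifting the principal hyperfinite relation generated by the $\bar T_n$ back into $\calg$ as a complement to $M$, and one must use the freedom to perturb $(T_n)$ — together with finiteness of $M$ and the fact that the $T_n$ asymptotically commute with the (full-group) elements coming from $M$ — to kill this obstruction while keeping $\calr$ principal and hyperfinite and keeping $(T_n)$ central.

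Next I would pick a free p.m.p.\ action $M\c(Y,\nu)$, taking it weakly mixing (for instance a Bernoulli action of the finite group $M$) so that ergodicity survives co-induction, and let $M\times\calr$ act on $(Y,\nu)$ through the projection $M\times\calr\to M$, so that $\calr$ acts trivially. Co-inducing as in Proposition \ref{prop-co-induced} yields a p.m.p.\ action $\calg\c(Z,\zeta)$, and since each modified $T_n\in[\calr]$ acts trivially on $Y$, that proposition provides a lift of $(T_n)$ to a central sequence in the full group of $\calg\ltimes(Z,\zeta)$ whose induced automorphism of $Z$ is again nowhere the identity. Moreover $\calg\ltimes(Z,\zeta)$ is canonically identified with $G\ltimes(Z,\zeta)$ for a natural p.m.p.\ action $G\c(Z,\zeta)$, so I obtain a non-trivial central sequence in $[G\ltimes(Z,\zeta)]$.

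Finally I would verify that $G\c(Z,\zeta)$ is free and ergodic. Freeness uses both hypotheses simultaneously: an element $g\in G\setminus M$ has non-trivial image in $G/M$, which acts freely on the base $(X,\mu)$ and hence moves $\zeta$-a.e.\ point of $Z$; an element $g\in M\setminus\{e\}$ fixes the base but, because the co-induced action encodes the free action $M\c(Y,\nu)$ on the fibre, moves the $Y$-coordinate and hence again moves $\zeta$-a.e.\ point. Ergodicity follows from the weak mixing of $M\c(Y,\nu)$; failing that, one restricts to an ergodic component of $(Z,\zeta)$, on which $G$ still acts freely and, since the lifted $T_n$ act trivially in the fibre directions, the restriction of the central sequence remains non-trivial and central. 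This produces a free ergodic p.m.p.\ Schmidt action of $G$, so $G$ has the Schmidt property.
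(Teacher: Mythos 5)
Your outline coincides with the paper's own scheme (it is essentially the sketch given in Section \ref{sec-intro}), and the later steps — co-induction via Proposition \ref{prop-co-induced}, identification of $\calg\ltimes(Z,\zeta)$ with $G\ltimes(Z,\zeta)$, freeness from freeness of $M\c (Y,\nu)$, and passing to ergodic components via Lemma \ref{lem-erg-dec} — are all correct and are exactly what the paper does. The problem is that the step you yourself flag as ``the main obstacle,'' namely replacing $(T_n)$ by a central sequence that generates a principal hyperfinite subgroupoid $\calr$ with $M\times\calr<\calg$, is left entirely unproven, and it is precisely what occupies most of Section \ref{sec-cent}. Worse, the one concrete tool you invoke for it is vacuous: since $M$ is central in $G$ and acts trivially on $X$, the asymptotic commutation of $T_n$ with the constant section $\phi_h$ for $h\in M$ reads $T_n(x)h=hT_n(x)$, which holds identically; commutation with elements of $M$ therefore gives no leverage whatsoever on the cocycle obstruction you describe.

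The missing content is the following. One must first split into cases according to the sets $A_n^p$ of $p$-periodic points of $S_n^\circ$ (this dichotomy is the actual proof of Corollary \ref{cor-gm}): after passing to a subsequence, either $\mu(A_n^p)\to 0$ for every $p\geq 2$, or $\mu(A_n^p)$ is uniformly positive for some fixed $p\geq 2$. In the first case, Lemma \ref{lem-de} converts $S_n$ into a genuinely periodic section $T_n$ — meaning $T_n^p x=e$ in $G$, not merely $(T_n^\circ)^p x=x$ — by Rokhlin-tower surgery, redefining the section on the top level of each tower to be the inverse of the product along the tower; Lemma \ref{lem-en-qn-ape} then inductively makes the $T_n$ commute and generate a subgroupoid $\cale$ isomorphic to the relation it generates, which is exactly the trivialization of your obstruction. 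In the second case no perturbation of this kind can work: on a set of uniformly positive measure the cycle product $(S_n)^p x$ is a non-trivial element of $M$. The paper kills this persistent holonomy by showing the level sets $A_n^{p,h}=\{\, x\in X \mid x\in A_n^p,\ (S_n)^p x=h \,\}$ are asymptotically invariant (Lemma \ref{lem-sigma-ai}, using centrality of $h$), fixing one dominant $h\in M$ by finiteness of $M$, patching restrictions together so that $\mu(A_n^{p,h})\to 1$ (Lemma \ref{lem-ab1}, which needs ergodicity of the base action), and then cancelling the holonomy with the composition $R_n=(S_m)^{-1}\circ S_n$ for $m\gg n$, whose $p$-th power is $h^{-1}h=e$ on a large set (Lemma \ref{lem-ab2}, Corollary \ref{cor-anbn}); Theorem \ref{thm-sch-per} then runs the inductive splitting in this case. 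None of these ideas appears in your proposal, so your first step is an assertion rather than a proof. A smaller local error: there is no weakly mixing p.m.p.\ action of the finite group $M$ on a non-trivial space, and Bernoulli actions of finite groups are not even ergodic, so your primary ergodicity argument fails; your fallback of restricting to an ergodic component is the correct (and the paper's) resolution.
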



\begin{rem}\label{rem-finite-center}
Let $G$ be a countable group and $M$ a finite central subgroup of $G$.
It remains unsolved whether the Schmidt property of $G/M$ implies the Schmidt property of $G$ (\cite[Question 5.16]{ktd}).
If $G/M$ has infinite AC-center, then $G$ also has the same property as well and thus has the Schmidt property (see Proposition \ref{prop-fc-ac} (ii) and related Remark \ref{rem-cs-infinity}).

Theorem \ref{thm-gm} might be used to answer this question affirmatively: if there exists a free ergodic p.m.p.\ action $G/M\c (X, \mu)$ which is Schmidt, along with a non-trivial central sequence in the full group of $(G/M)\ltimes (X, \mu)$ which lifts to a central sequence in the full group of $G\ltimes (X, \mu)$, then we can apply Theorem \ref{thm-gm} and conclude that $G$ has the Schmidt property.
While this lifting problem of central sequences is unsolved in full generality, we note that it is solved affirmatively for stability sequences in \cite{kida-sce}.
\end{rem}

A sequence $(g_n)$ of elements of a countable group $G$ is called \textit{central} if for each $h\in G$, $g_n$ commutes with $h$ for all sufficiently large $n$.

\begin{thm}[Corollary \ref{cor-cs}]\label{thm-cs-infinity}
If a countable group $G$ admits a central sequence diverging to infinity, then $G$ has the Schmidt property.
\end{thm}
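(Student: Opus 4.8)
The plan is to deduce the statement from Theorem~\ref{thm-fc} by showing that $G$ has infinite AC-center. First I would pass to a subsequence in order to arrange that $A:=\langle g_n : n\in\N\rangle$ is abelian: for each fixed $k$ the central sequence satisfies $g_ng_k=g_kg_n$ for all large $n$, so a diagonal choice produces a subsequence whose terms pairwise commute, and this subsequence still diverges to infinity and is still central. Since $g_n\to\infty$, the group $A$ is infinite, and because the FC-center is contained in the AC-center it suffices to show that $A$ is contained in the AC-center.

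The crux is to prove that every $a\in A$ lies in the AC-center, i.e.\ that the conjugation action of $G$ on the conjugacy class $\mathrm{Cl}(a)=\{xax^{-1}:x\in G\}$ has amenable image $Q_a:=G/\bigcap_{h\in G}hC_G(a)h^{-1}$. Here I would use that amenability is a local property, namely that a countable group is amenable precisely when each of its finitely generated subgroups is, together with the following consequence of centrality: for every finite $F\subseteq G$ the tail $\{g_m : m \text{ large}\}$ lies in $\bigcap_{h\in F}C_G(h)$, so each element of $G$ centralizes a cofinite part of the sequence and, dually, each point of $\mathrm{Cl}(a)$ is fixed by $g_m$ for all large $m$. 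Given a finitely generated $H\le G$ one then has $H\subseteq C_G(g_m)$ for all large $m$, and I would leverage this to show that the permutation group induced by $H$ on $\mathrm{Cl}(a)$ is amenable: morally, $H$ can only move the conjugates of $a$ through the abelian, asymptotically central subgroup $A$, so its action is finitary modulo an amenable kernel. Taking the directed union over all such $H$ would then give that $Q_a$ is amenable, whence $a$ lies in the AC-center; since $A$ is infinite, the AC-center is infinite, and Theorem~\ref{thm-fc} finishes the proof.

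The main obstacle is exactly this amenability claim for $Q_a$. A priori a finitely generated non-amenable subgroup $H$, say a copy of a free group, might act faithfully by conjugation on $\mathrm{Cl}(a)$, and one must show that the central sequence forbids this; I expect the key point to be that any such $H$ commutes with a cofinite tail of $(g_m)$, which should force its action on the conjugates of $a$ to factor through a locally finite, hence amenable, permutation group. If one prefers to stay within the co-induction framework summarized above rather than invoke Theorem~\ref{thm-fc} as a black box, the same amenability input can be fed in directly: writing $Q_a=G/M$ with $M:=\bigcap_{h\in G}hC_G(a)h^{-1}$ and choosing (for an $a$ with infinite conjugacy class) any free ergodic p.m.p.\ action of the infinite amenable group $Q_a$ on $(X,\mu)$, we let $G$ act through $G\to Q_a$; then $M$ acts trivially, $G/M=Q_a$ acts freely, the relation of $G\ltimes (X,\mu)$ is hyperfinite and so its full group carries a nowhere-trivial central sequence, and co-inducing a free action of $M$ upgrades this to a free ergodic Schmidt action of $G$.
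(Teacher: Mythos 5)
Your proposed reduction to Theorem \ref{thm-fc} cannot succeed, because the claim it hinges on is false: a central sequence diverging to infinity does not imply that the AC-center is infinite, or even nontrivial. Consider $G=\bigoplus_{n\in\N}H_n$, where each $H_n$ is a copy of the free group $F_2$, and let $g_n$ be a fixed nontrivial element of the $n$-th summand. Every $h\in G$ is supported on finitely many summands, so $h$ commutes with $g_n$ for all large $n$; thus $(g_n)$ is a central sequence diverging to infinity, its terms already commute pairwise, and $A=\langle \, g_n\mid n\in\N \, \rangle$ is infinite abelian — exactly your setup. Now take $a=g_k\in A$. Since the other summands centralize $a$, we get $C_G(a)=C_{H_k}(a)\times\bigoplus_{n\neq k}H_n$, and hence $\bigcap_{h\in G}hC_G(a)h^{-1}=N_k\times\bigoplus_{n\neq k}H_n$, where $N_k$ is the normal core of $C_{H_k}(a)$ in $H_k$. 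The centralizer $C_{H_k}(a)$ is cyclic, so $N_k$ is a cyclic normal subgroup of $F_2$ and must be trivial (a nontrivial cyclic normal subgroup $\langle w\rangle$ would have centralizer of index at most two in $F_2$, forcing $F_2$ to be virtually cyclic). Therefore $Q_a=G/\bigcap_{h\in G}hC_G(a)h^{-1}\cong H_k\cong F_2$ is non-amenable, so $a$ does not lie in the AC-center; the same computation for an arbitrary nontrivial element shows the AC-center of this $G$ is trivial. Consequently no argument routed through Theorem \ref{thm-fc} can prove the Schmidt property for this group, which is consistent with the paper's introduction, where Theorems \ref{thm-gm} and \ref{thm-cs-infinity} are explicitly described as by-products that do \emph{not} follow from the other construction.

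The precise step where your heuristic breaks is the passage from ``every finitely generated $H\leq G$ centralizes a tail of $(g_m)$'' to ``the image of $H$ in $Q_a$ is amenable.'' Centrality constrains the tail of the sequence relative to each fixed group element; it puts no constraint whatsoever on how a fixed subgroup permutes the conjugates of a fixed element $a$. In the example, $H=H_k$ centralizes $g_m$ for every $m\neq k$, yet $H$ acts on the conjugacy class of $a=g_k$ with image isomorphic to all of $F_2$; the conjugates $hah^{-1}$ do not move ``through $A$'' in any sense. The same unfounded amenability of $Q_a$ underlies your alternative co-induction ending, which in addition glosses over the genuinely hard part — upgrading a nowhere-trivial central sequence over a non-free action to a free Schmidt action, which is what Corollary \ref{cor-gm} accomplishes via Theorems \ref{thm-sch-ape} and \ref{thm-sch-per}. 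For comparison, the paper's proof avoids the AC-center entirely: it takes the conjugation Bernoulli action $G\c (X,\mu)$ with $X=\prod_{G\setminus\{e\}}[0,1]$, on which the center $C$ of $G$ acts trivially and $G/C$ acts essentially freely, observes that $(g_n)$ (arranged to avoid $C$) is a nowhere-trivial central sequence in $[G\ltimes(X,\mu)]$, and then applies Corollary \ref{cor-gm} when $C$ is finite, quoting \cite[Example 8.8]{ktd} when $C$ is infinite.
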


\begin{rem}
Let $G$ be a countable group which admits a central sequence diverging to infinity.
If $G$ has trivial center, then the Schmidt property for $G$ can be proved immediately as follows (\cite[Proposition 9.5]{kec}):
Let $G$ act on the set $G\setminus \{ e\}$ by conjugation, which induces the p.m.p.\ action of $G$ on the product space $X\coloneqq \prod_{G\setminus \{ e\}}[0, 1]$ equipped with the product measure $\mu$ of the Lebesgue measure.
Then a central sequence in $G$ gives rise to a central sequence in the full group of $G\ltimes (X, \mu)$, and the action $G\c (X, \mu)$ is essentially free since $G$ has trivial center.

Let $G$ be a countable group with infinite FC-center.
Then given a sequence $(g_n)$ in its FC-center diverging to infinity, each centralizer $C_G(g_n)$ is of finite index in $G$, although the index of $C_G(g_n)$ in $G$ possibly grows to infinity.
In a sense, the $g_n$ may become less and less central in $G$ as $n$ increases.
In this case, the above Bernoulli-like action of $G$ via conjugation $G\c G\setminus \{ e\}$ is not suitable for establishing the Schmidt property, and another approach must be taken.
\end{rem}

\noindent \textbf{An organization of the paper.}
In Section \ref{sec-cent}, we fix notation and terminology for discrete p.m.p.\ groupoids and describe co-induction of p.m.p.\ actions of discrete p.m.p.\ groupoids, extending the co-induction construction for actions of countable groups.
As an application, we deduce the Schmidt property for a countable group $G$ under the assumption that $G$ admits a (not necessarily free) p.m.p.\ action $G\c (X, \mu)$ such that the translation groupoid $G\ltimes (X, \mu)$ is Schmidt, together with some additional assumptions.
In Section \ref{sec-ac}, we collect elementary properties of groups with infinite AC-center and reduce the proof of Theorem \ref{thm-fc} to that for groups with infinite FC-center.
Sections \ref{sec-noncom} and \ref{sec-com} are devoted to the first proof that groups with infinite FC-center have the Schmidt property.
The proof in these two sections is divided into several cases, depending on the existence and structure of an infinite abelian normal subgroup of $G$ contained in the FC-center of $G$.
An outline of the proof is given in Subsection \ref{subsec-outline}.
In Subsection \ref{subsec-ex}, we exhibit examples of groups $G$ corresponding to each of the cases considered in Sections \ref{sec-noncom} and \ref{sec-com}.


In Section \ref{sec-td}, for a countable group with infinite FC-center, we give the second construction of a free p.m.p.\ Schmidt action, by way of ultraproducts.


In Appendix \ref{sec-app}, given an arbitrary countable abelian group $A$, we present a countable group with property (T) whose center is isomorphic to $A$.
Our construction relies on the construction of Cornulier \cite{cor} and property (T) of the group $\mathit{SL}_3(\Z[t])\ltimes \Z[t]^3$, where $\Z[t]$ is the polynomial ring over $\Z$ in one indeterminate $t$.
This result is useful in constructing interesting examples of groups with infinite FC-center along with Examples \ref{ex-prufer} and \ref{ex-caln}, while not being necessary for proving Theorem \ref{thm-fc}.

Throughout the paper, unless otherwise mentioned, all relations among Borel sets and maps are understood to hold up to null sets.
Let $\N$ denote the set of positive integers.



\medskip

\noindent \textbf{Acknowledgments.} We thank the anonymous referee for his/her careful reading of the paper and helpful corrections and suggestions, especially for Remark \ref{rem-action} and Lemma \ref{lem-en-qn-ape}.


\section{Central sequences in translation groupoids}\label{sec-cent}

\subsection{Groupoids}

We fix notation and terminology.
Let $\calg$ be a groupoid.
We denote by $\calg^0$ the unit space of $\calg$ and denote by $r, s\colon \calg \to \calg^0$ the range and source maps of $\calg$, respectively.
For $x\in \calg^0$, we set $\calg^x=r^{-1}(x)$ and $\calg_x=s^{-1}(x)$.
For a subset $A\subset \calg^0$, we set $\calg_A = r^{-1}(A)\cap s^{-1}(A)$.
The set $\calg_A$ is then a groupoid with unit space $A$, with respect to the product inherited from $\calg$.
A groupoid $\calg$ is called \textit{Borel} if $\calg$ is a standard Borel space, $\calg^0$ is a Borel subset of $\calg$, and the following maps are all Borel: the range and source maps, the multiplication map $(\gamma ,\delta )\mapsto \gamma \delta$ defined for $\gamma, \delta \in \calg$ with $s(\gamma)=r(\delta)$, and the inverse map $\gamma \mapsto \gamma ^{-1}$.
If the range and source maps are countable-to-one further, then $\calg$ is called \textit{discrete}.
We mean by a \textit{discrete p.m.p.\ groupoid} a pair $(\calg,\mu )$ of a discrete Borel groupoid $\calg$ and a Borel probability measure $\mu$ on $\calg^0$ such that $\int _{\calg^0}c^r_x \, d\mu(x) = \int _{\calg^0} c^s_x\, d\mu(x)$, where $c^r_x$ and $c^s_x$ are the counting measures on $\calg^x$ and $\calg_x$, respectively.
The space $\calg$ is then equipped with this common measure $\int _{\calg^0}c^r_x \, d\mu(x) = \int _{\calg^0} c^s_x\, d\mu(x)$.

A discrete p.m.p.\ groupoid is called \textit{principal} if the map $\gamma \mapsto (r(\gamma ), s(\gamma ))$ is injective.
Let $\mathcal{R}$ be a p.m.p.\ countable Borel equivalence relation on a standard probability space $(X,\mu )$.
Then the pair $(\mathcal{R},\mu )$ is naturally a principal discrete p.m.p.\ groupoid with unit space $\mathcal{R}^0 = \{ \, (x,x) \mid x\in X \, \}$, which are simply identified with $X$ itself when there is no cause for confusion.
The range and source maps are given by $r(x, y)=x$ and $s(x, y)=y$, respectively, and the multiplication and inverse operations are given by $(x,y)(y,z)=(x,z)$ and $(x,y)^{-1}=(y,x)$, respectively.
We mean by a \textit{discrete p.m.p.\ equivalence relation} on a standard probability space $(X, \mu)$ a p.m.p.\ countable Borel equivalence relation on $(X, \mu)$ equipped with this structure of a discrete p.m.p.\ groupoid.


Let $(\calg,\mu )$ be a discrete p.m.p.\ groupoid.
A Borel subset $A\subset \calg^0$ is called \textit{$\calg$-invariant} if $r(\calg_x)\subset A$ for $\mu$-almost every $x\in A$.
We say that $(\calg, \mu)$ is \textit{ergodic} if each $\calg$-invariant Borel subset $A$ of $\calg^0$ is $\mu$-null or $\mu$-conull.
A \textit{local section} of $\calg$ is a Borel map $\phi \colon \mathrm{dom}(\phi ) \rightarrow \calg$, where $\mathrm{dom}(\phi )$ is a Borel subset of $\calg^0$, such that $\phi (x)\in \calg_x$ for each $x\in \mathrm{dom}(\phi )$ and the associated map $\phi ^\circ \colon \mathrm{dom}(\phi )\rightarrow \calg^0$, given by $\phi ^\circ =r\circ \phi$, is injective.
Two local sections are identified if their domains and values agree up to a $\mu$-null set.
For two local sections $\phi \colon A\rightarrow \calg$, $\psi \colon B \rightarrow \calg$, the \textit{composition} of them is the local section $\psi \circ \phi \colon (\phi ^\circ)^{-1}(\phi ^\circ(A)\cap B) \rightarrow \calg$ defined by $(\psi \circ \phi )(x)= \psi (\phi ^\circ(x))\phi (x)$.
The \textit{inverse} of a local section $\phi \colon A\rightarrow \calg$ is the local section $\phi ^{-1}\colon \phi ^\circ(A) \rightarrow \calg$ defined by $\phi ^{-1}(x) = \phi ((\phi ^\circ)^{-1}(x))^{-1}$.

We denote by $[\calg]$ the group of all local sections $\phi$ of $\calg$ with $\mathrm{dom}(\phi )=\calg^0$, and call $[\calg]$ the \textit{full group} of $(\calg, \mu)$.
If the measure $\mu$ should be specified, then we denote it by $[(\calg, \mu)]$.
In fact the full group is a group such that the product and inverse operations are given by the composition and inverse, respectively.
For $\phi \in [\calg]$ and a positive integer $n$, let $\phi^n$ denote the $n$ times composition of $\phi$ with itself, and let $\phi^{-n}$ denote the inverse of $\phi^n$.
Let $\phi^0$ denote the trivial element of $[\calg]$, i.e., the identity map on $\calg^0$.
We draw attention to distinction between the trivial element $\phi^0$ of $[\calg]$ and the associated map $\phi^\circ =r\circ \phi$.


To each action $G\curvearrowright X$ of a group $G$ on a set $X$, the \textit{translation groupoid} $\calg = G\ltimes X$ is associated as follows:
The set of groupoid elements is defined as $\calg = G\times X$ with unit space $\{ e \} \times X$, which is identified with $X$ if there is no cause of confusion.
The range and source maps $r, s\colon \calg \rightarrow \calg^0$ are given by $r(g,x)=gx$ and $s(g,x)=x$, respectively.
The multiplication and inverse operations are given by $(g,hx)(h,x)= (gh,x)$ and $(g,x)^{-1}= (g^{-1}, gx )$, respectively.
Suppose that $G$ is a countable group and $X$ is a standard Borel space equipped with a Borel probability measure $\mu$.
If the action $G\c X$ is further Borel and preserves $\mu$, then the pair $(G\ltimes X, \mu)$ is a discrete p.m.p.\ groupoid and is denoted by $G\ltimes (X, \mu)$.
It is also denoted by $G\ltimes X$ for brevity if $\mu$ is understood from the context.
If the action $G\c (X, \mu)$ is essentially free, i.e., the stabilizer of almost every point of $X$ is trivial, then the groupoid $G\ltimes (X,\mu )$ is isomorphic to the associated orbit equivalence relation $\{ \, (gx,x) \mid g\in G,\, x \in X\, \}$ via the map $(g, x)\mapsto (gx, x)$. 

For each action $G\c X$, we similarly define the groupoid $X\rtimes G$ such that the set of groupoid elements is $X\times G$ and the range and source of $(x, g)\in X\times G$ are $x$ and $g^{-1}x$, respectively.
Then $X\rtimes G$ is isomorphic to $G\ltimes X$ via the map $(x, g)\mapsto (g, g^{-1}x)$.

Let $p\colon G\times X \rightarrow G$ be the projection.
Then each local section $\phi$ of the groupoid $G\ltimes X$ is completely determined by the composed map $p\circ \phi \colon \mathrm{dom}(\phi )\rightarrow G$.
Thus we will abuse notation and identify $\phi$ with $p\circ \phi$ if there is no cause of confusion.
The group $G$ embeds into $[G\ltimes X]$ via the map $g\mapsto \phi _g$, where $\phi _g \colon X\rightarrow G$ is the constant map with value $g$.

\subsection{Central sequences}

Let $(\calg, \mu)$ be a discrete p.m.p.\ groupoid.
A sequence $(A_n)$ of Borel subsets of the unit space $\calg^0$ is called \textit{asymptotically invariant} for $(\calg, \mu)$ if
\[\mu(T^\circ A_n\bigtriangleup A_n)\to 0\]
for every $T\in [\calg]$.
A sequence $(T_n)$ in the full group $[\calg]$ is called \textit{central} in $[\calg]$ if $T_n$ asymptotically commutes with every $S\in [\calg]$, i.e.,
\[\mu(\{ \, x\in \calg^0\mid (T_n\circ S)x\neq (S\circ T_n)x\, \})\to 0\]
for every $S\in [\calg]$.

\begin{rem}\label{rem-ai-ac}
Let $G$ be a countable subgroup of $[\calg]$ and suppose that $G$ generates $\calg$, i.e., the minimal subgroupoid of $\calg$ containing $G$ in its full group is equal to $\calg$.
Then a sequence $(A_n)$ of Borel subsets of $\calg^0$ is asymptotically invariant for $(\calg, \mu)$ if $\mu(g A_n\bigtriangleup A_n)\to 0$ for every $g\in G$ (\cite[p.93]{js}).
Moreover a sequence $(T_n)$ in $[\calg]$ is central if and only if $T_n$ asymptotically commutes with every $g\in G$ and $\mu(T_n^\circ A\bigtriangleup A)\to 0$ for every Borel subset $A\subset X$ (\cite[Remark 3.3]{js} or \cite[Lemma 2.3]{kida-sce}).
While these assertions are verified only for translation groupoids $G\ltimes (X, \mu)$ in the cited papers, the same proof is available for the above generalization.
\end{rem}

We say that a discrete p.m.p.\ groupoid $(\calg, \mu)$ is \textit{Schmidt} if there exists a central sequence $(T_n)$ in $[\calg]$ such that $\mu(\{ \, x\in X\mid T_n^\circ x\neq x\, \})\to 1$.
We say that a p.m.p.\ action $G\c (X, \mu)$ of a countable group $G$ is \textit{Schmidt} if the groupoid $G\ltimes (X, \mu)$ is Schmidt.
If a countable group $G$ admits a free ergodic p.m.p.\ action which is Schmidt, then we say that $G$ has the \textit{Schmidt property}.
(N.B. A countable group, being a discrete p.m.p.\ groupoid on a singleton, is never Schmidt.)
The following lemma implies that the Schmidt property of $G$ follows once we find a free p.m.p.\ Schmidt action of $G$ which may not be ergodic.
We refer to \cite[Section 6]{hahn} for the ergodic decomposition of discrete p.m.p.\ groupoids.

\begin{lem}\label{lem-erg-dec}
Let $(\calg, \mu)$ be a discrete p.m.p.\ groupoid with the ergodic decomposition map $\pi \colon (\calg^0, \mu)\to (Z, \zeta)$ and the disintegration $\mu =\int_Z \mu_z\, d\zeta(z)$.
Suppose that $(\calg, \mu)$ is Schmidt and let $(T_n)$ be a central sequence in $[(\calg, \mu)]$ such that $\mu(\{ \, x\in X\mid T_n^\circ x\neq x\, \})\to 1$.
Then there exists a subsequence $(T_{n_i})$ of $(T_n)$ such that for $\zeta$-almost every $z\in Z$, $(T_{n_i})$ is a central sequence in $[(\calg, \mu_z)]$ such that $\mu_z(\{ \, x\in X\mid T_{n_i}^\circ x\neq x\, \})\to 1$.
Thus for $\zeta$-almost every $z\in Z$, the ergodic component $(\calg, \mu_z)$ is Schmidt.
\end{lem}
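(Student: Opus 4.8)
The plan is to push everything down to the fibers of $\pi$ via the disintegration $\mu=\int_Z\mu_z\,d\zeta(z)$, converting the defining conditions of a non-trivial central sequence into $L^1(\zeta)$-convergence statements, and then to extract a single subsequence along which all of them hold $\zeta$-almost everywhere. First I would record the structural facts about the ergodic decomposition. Since the factor map $\pi$ is $\calg$-invariant, for almost every $\gamma\in\calg$ one has $\pi(r(\gamma))=\pi(s(\gamma))$, so $\calg$ splits (modulo null sets) as a measurable bundle of groupoids $\calg_{\pi^{-1}(z)}$ over $Z$, with $(\calg,\mu_z)$ ergodic for $\zeta$-almost every $z$. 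Moreover every $T\in[\calg]$ satisfies $\pi\circ T^\circ=\pi$, and since $T^\circ$ is a $\mu$-preserving automorphism of $\calg^0$ commuting with $\pi$, uniqueness of disintegration gives $(T^\circ)_*\mu_z=\mu_z$ for $\zeta$-almost every $z$; hence $T$ restricts to an element of $[(\calg,\mu_z)]$ for almost every $z$. This applies simultaneously to any fixed countable subset of $[\calg]$.

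Next I would fix a countable subgroup $\Gamma\leqslant[\calg]$ generating $\calg$ and a countable Boolean algebra $\mathcal{A}$ of Borel subsets of $\calg^0$ generating its Borel $\sigma$-algebra, and apply the previous paragraph to $\{T_n\}\cup\Gamma$ so that all restrictions are defined on a common $\zeta$-conull set. Disintegrating the groupoid measure $\int_{\calg^0}c^s_x\,d\mu(x)$ over $Z$, the hypothesis that the subgroupoid generated by $\Gamma$ is conull in $\calg$ yields that $\Gamma$ generates $\calg_{\pi^{-1}(z)}$ for $\zeta$-almost every $z$; and the traces of $\mathcal{A}$ on any fiber generate its Borel $\sigma$-algebra. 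By Remark \ref{rem-ai-ac}, it therefore suffices to control, for each fixed $g\in\Gamma$ and $A\in\mathcal{A}$, the fiberwise quantities
\begin{gather*}
f^g_n(z)=\mu_z(\{\, x\mid (T_n\circ g)x\neq (g\circ T_n)x\,\}),\quad h^A_n(z)=\mu_z(T_n^\circ A\bigtriangleup A),\\
\phi_n(z)=\mu_z(\{\, x\mid T_n^\circ x\neq x\,\}).
\end{gather*}
By the disintegration formula and the hypotheses on $(T_n)$, we have $\int_Z f^g_n\,d\zeta\to 0$, $\int_Z h^A_n\,d\zeta\to 0$, and $\int_Z(1-\phi_n)\,d\zeta\to 0$, so that $f^g_n,\ h^A_n,\ 1-\phi_n\to 0$ in $L^1(\zeta)$.

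Finally, since $\Gamma$ and $\mathcal{A}$ are countable, I would run a diagonal argument: $L^1$-convergence gives almost-everywhere convergence along subsequences, and successive extraction followed by diagonalization produces a single subsequence $(n_i)$ along which $f^g_{n_i}(z)\to 0$ for every $g\in\Gamma$, $h^A_{n_i}(z)\to 0$ for every $A\in\mathcal{A}$, and $\phi_{n_i}(z)\to 1$, for $\zeta$-almost every $z$. For such $z$, Remark \ref{rem-ai-ac} applied to $(\calg,\mu_z)$ shows that $(T_{n_i})$ is central in $[(\calg,\mu_z)]$, while $\phi_{n_i}(z)\to 1$ is exactly the required non-triviality, so $(\calg,\mu_z)$ is Schmidt. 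I expect the main obstacle to be the two measurability/disintegration steps—that a generating family for $\calg$ restricts to a generating family for almost every fiber, and that full-group elements restrict to $\mu_z$-preserving local sections—both of which hinge on careful handling of the groupoid measure and uniqueness of disintegration rather than on any genuinely hard analysis; the diagonal extraction itself is routine.
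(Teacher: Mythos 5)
Your proposal is correct and takes essentially the same route as the paper's proof: disintegrate the two defining conditions of a non-trivial central sequence over $(Z,\zeta)$, reduce to countably many conditions (a countable generating family of Borel sets together with a countable family of groupoid elements), pass from $L^1(\zeta)$-convergence to $\zeta$-almost everywhere convergence along a diagonal subsequence, and conclude by the characterization of centrality in Remark~\ref{rem-ai-ac}. The only cosmetic difference is that the paper covers $\calg$ by countably many local sections obtained from the Lusin--Novikov uniformization theorem rather than invoking a countable generating subgroup $\Gamma\leqslant[\calg]$ (whose existence you assert without justification, though it is standard), which also lets the paper bypass your fiberwise-generation step, since an exact covering of $\calg$ restricts to a covering of each ergodic component automatically.
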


\begin{proof}
Let $\calb$ be the sigma field of Borel subsets of $\calg^0$.
Let $\{ A_k\}$ be a countable subfamily of $\calb$ which generates $\calb$.
Then for every $z\in Z$, the family $\{ A_k\}$ generates a dense subfield in $\calg^0$ with respect to $\mu_z$.
Since $(T_n)$ is central in $[(\calg, \mu)]$, we have $\int_Z\mu_z(T_n^\circ A_k\bigtriangleup A_k)\, d\zeta(z)=\mu(T_n^\circ A_k\bigtriangleup A_k)\to 0$ for each $k$.
Thus after passing to a subsequence of $(T_n)$, for $\zeta$-almost every $z\in Z$, we have $\mu_z(T_n^\circ A_k\bigtriangleup A_k)\to 0$ for each $k$.

Applying the Lusin-Novikov uniformization theorem (\cite[Theorem 18.10]{kec-set}), we obtain a countable collection $\{ \phi_l \}$ of local sections of $\calg$ such that $\bigcup_l\phi_l(\mathrm{dom}(\phi_l))=\calg$.
Similarly to the above, after passing to a subsequence of $(T_n)$, for $\zeta$-almost every $z\in Z$, we have $\mu_z(\{ \, x\in X\mid (\phi_l\circ T_n)x=(T_n\circ \phi_l)x\, \})\to 1$ for each $l$ and $\mu_z(\{ \, x\in X\mid T_n^\circ x\neq x\, \})\to 1$.
The first convergence together with the convergence obtained in the last paragraph implies that $(T_n)$ is a central sequence in $[(\calg, \mu_z)]$ for $\zeta$-almost every $z\in Z$.
\end{proof}


\subsection{Co-induced actions}\label{subsec-coind}

Co-induction is a canonical method to obtain a p.m.p.\ action of a countable group from a p.m.p.\ action of its subgroup.
We generalize this for p.m.p.\ actions of discrete p.m.p.\ groupoids.

\begin{rem}\label{rem-action}
Formally we mean by an action of a groupoid $\calg$ an action of $\calg$ on a space $Z$ fibered over $\calg^0$ such that each $g\in \calg$ gives rise to an isomorphism from the fiber at the source of $g$ onto the fiber at the range of $g$.
Then we say that $\calg$ acts on the fibered space $Z$.
We often obtain such an action of $\calg$ from a groupoid homomorphism $\alpha \colon \calg \to \aut(Y)$ for some space $Y$, as follows:
Let $Z=\calg^0\times Y$ and regard it to be fibered over $\calg^0$ via the projection.
Then $\calg$ acts on $Z$ by $g(s(g), y)=(r(g), \alpha(g)y)$.
For simplicity we will often abuse terminology of actions, and call this action on the fibered space $Z$ an action of $\calg$ on the space $Y$ (which is not fibered over $\calg^0$ though) unless there is cause of confusion.
\end{rem}

Let $(\calg, \mu)$ be a discrete p.m.p.\ groupoid and set $X=\calg^0$.
Let $\cals$ be a Borel subgroupoid of $\calg$ and suppose that $\cals$ admits the measure-preserving action on a standard probability space $(Y, \nu)$ arising from a Borel homomorphism $\alpha \colon \cals \to \mathrm{Aut}(Y, \nu)$.
From this action of $\cals$, we co-induce a p.m.p.\ action $\calg \c (Z, \zeta)$ as follows:
For each $x\in X$, we set
\[Z_x=\{ \, f\colon \calg^x\to Y\mid f(gh^{-1})=\alpha(h)f(g)\ \text{for each}\ g\in \calg^x\ \text{and each}\ h\in \cals_{s(g)}\, \}\]
and define $Z$ as the disjoint union $Z= \bigsqcup_{x\in X}Z_x$.
The set $Z$ is fibered with respect to the projection $p\colon Z\to X$ sending each element of $Z_x$ to $x$.
The groupoid $\calg$ acts on $Z$ by
\[(gf)(g')=f(g^{-1}g')\]
for $g\in \calg_x$, $g'\in \calg^{r(g)}$ and $f\in Z_x$ with $x\in X$.

A measure-space structure on $Z$ is defined as follows:
We have the decomposition of the unit space, $X=\bigsqcup_{m\in \N \cup \{ \infty \}}X_m$, into the $\calg$-invariant Borel subsets $X_m$ such that the index of $\cals_{X_m}$ in $\calg_{X_m}$ is the constant $m$.
First suppose that $X=X_m$ for some $m\in \N \cup \{ \infty\}$.
Let $\{ \psi_i \}_{i=1}^m$ be a family of choice functions for the inclusion $\cals <\calg$, i.e., a family of Borel maps $\psi_i \colon X\to \calg$ such that for each $x\in X$, we have $\psi_i(x)\in \calg^x$ and the family $\{ \psi_i(x)\}_{i=1}^m$ is a complete set of representatives of all the equivalence classes in $\calg^x$, where the equivalence relation on $\calg^x$ is associated to the inclusion $\cals <\calg$ as follows: two elements $g, h\in \calg^x$ are equivalent if and only if $g^{-1}h\in \cals$.
Then $Z$ is identified with the product space $X\times \prod_{i=1}^m Y$ under the map sending each $f\in Z_x$ with $x\in X$ to $(x, (f(\psi_i(x)))_i)$.
The measure-space structure on $Z$ is induced by this identification, where the space $X\times \prod_{i=1}^m Y$ is equipped with the product measure $\mu \times \prod_{i=1}^m \nu$.
The action of $\calg$ on $Z$ is Borel and preserves the probability measure on $Z$.

If $X$ is not necessarily equal to $X_m$ for some $m\in \N$, then as already stated, we have the decomposition $X=\bigsqcup_{m\in \N \cup \{ \infty \}}X_m$ into $\calg$-invariant Borel subsets.
The set $Z$ is decomposed into the $\calg$-invariant subsets $p^{-1}(X_m)$, on which the measure-space structure is given in the way in the previous paragraph.
Then the measure-space structure is also induced on $Z$, so that each $p^{-1}(X_m)$ is Borel and the projection $p\colon Z\to X$ is measure-preserving.

Let $\zeta$ be the induced probability measure on $Z$.
We define a discrete p.m.p.\ groupoid $(\calg, \mu) \ltimes (Z, \zeta)=(\tilde{\calg}, \tilde{\mu})$ as follows:
The set of groupoid elements is the fibered product $\tilde{\calg}\coloneqq \calg \times_X Z$ with respect to the source map $s\colon \calg \to X$ and the projection $p\colon Z\to X$.
The unit space is $\tilde{\calg}^0\coloneqq Z$ with measure $\tilde{\mu}\coloneqq \zeta$.
The range and source maps are given by $\tilde{r}(g, z)=gz$ and $\tilde{s}(g, z)=z$, respectively, with groupoid operations given by $(gh, z)=(g, hz)(h, z)$ and $(g, z)^{-1}=(g^{-1}, gz)$.
Each element $T\in [\calg]$ lifts to the element $\tilde{T}\in [\tilde{\calg}]$ defined by $\tilde{T}z=(Tx, z)$ for $z\in Z_x$ with $x\in X$.

Let us recall the following fact from the proof of \cite[Theorem 15]{td} or \cite[Example 8.8]{ktd}:
Let $G$ be a countable group, $C$ a central subgroup of $G$, and $C\c (Y, \nu)$ a p.m.p.\ action.
We define $G\c (Z, \zeta)$ as the action co-induced from the action $C\c (Y, \nu)$.
Then each sequence of elements of $C$ that converges to the identity in $\mathrm{Aut}(Y, \nu)$ is central in the full group of the groupoid $G\ltimes (Z, \zeta)$.
We generalize this fact to the following:

\begin{prop}\label{prop-co-induced}
Let $(\calg, \mu)$ be a discrete p.m.p.\ groupoid and set $X=\calg^0$.
Let $\cals$ be a Borel subgroupoid of $\calg$, $(Y, \nu)$ a standard probability space, and $\alpha \colon \cals \to \mathrm{Aut}(Y, \nu)$ a Borel homomorphism.
Let $\calg \c (Z, \zeta)$ denote the action co-induced from the action $\cals \c (X\times Y, \mu \times \nu)$ via $\alpha$.
Let $(T_n)$ be a central sequence in $[\calg]$ such that each $T_n$ belongs to $[\cals]$ and for each Borel subset $B\subset Y$, we have
\[\int_X\nu(\alpha(T_nx)B\bigtriangleup B)\, d\mu(x)\to 0\]
as $n\to \infty$.
Then the sequence $(\tilde{T}_n)$ of the lifts of $T_n$ is central in the full group of the groupoid $(\calg, \mu)\ltimes (Z, \zeta)$ defined above.
\end{prop}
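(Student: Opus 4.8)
The plan is to verify, for the lifted sequence $(\tilde T_n)$ in the full group of $\tilde\calg:=(\calg,\mu)\ltimes(Z,\zeta)$, the two conditions characterizing centrality in Remark~\ref{rem-ai-ac}. Fix a countable subgroup $G_0\le[\calg]$ generating $\calg$. Since every element of $\tilde\calg$ has the form $(g,z)$ with $s(g)=p(z)$, and since writing $g$ piecewise as a composition of sections drawn from $G_0$ exhibits $(g,z)$ as a composition of the corresponding lifts, the lifts $\{\tilde g:g\in G_0\}$ generate a countable subgroup of $[\tilde\calg]$ that generates $\tilde\calg$. By Remark~\ref{rem-ai-ac} it thus suffices to prove: \textbf{(i)} $\tilde T_n$ asymptotically commutes with $\tilde g$ for each $g\in G_0$; and \textbf{(ii)} $\tilde\mu(\tilde T_n^\circ\tilde A\bigtriangleup\tilde A)\to0$ for every Borel set $\tilde A\subset Z$.

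For \textbf{(i)}, the key point is that $T\mapsto\tilde T$ is a homomorphism $[\calg]\to[\tilde\calg]$: for $z\in Z_x$ associativity of the $\calg$-action on $Z$ gives $(\tilde S\circ\tilde T)^\circ z=(S(T^\circ x))\bigl((Tx)z\bigr)=\bigl((S\circ T)(x)\bigr)z=\widetilde{S\circ T}^{\,\circ}z$. Hence $\tilde T_n\circ\tilde g=\widetilde{T_n\circ g}$ and $\tilde g\circ\tilde T_n=\widetilde{g\circ T_n}$, and these act identically on $z\in Z_x$ whenever the groupoid elements $(T_n\circ g)(x)$ and $(g\circ T_n)(x)$ coincide. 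As $p$ is measure-preserving, the set of $z$ on which $\tilde T_n$ and $\tilde g$ fail to commute projects into $\{x:(T_n\circ g)(x)\neq(g\circ T_n)(x)\}$, whose $\mu$-measure tends to $0$ because $(T_n)$ is central in $[\calg]$. Thus \textbf{(i)} uses only centrality of $(T_n)$, not the hypothesis on $\alpha$.

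The substance is \textbf{(ii)}, where the hypothesis enters. It suffices to verify asymptotic invariance on a generating family for $\calb(Z)$: the base sets $p^{-1}(A)$ and, after fixing choice functions $\{\psi_i\}_{i=1}^m$ for $\cals<\calg$ on each piece $X_m$ and recording $z$ by its coordinates $z_i:=z(\psi_i(p(z)))$, the cylinder sets $\{z:z_{i_0}\in B\}$ ($A\subset X$, $B\subset Y$ Borel). Base sets are immediate: $p$ is measure-preserving and satisfies $p\circ\tilde T_n^\circ=T_n^\circ\circ p$, so $\tilde\mu(\tilde T_n^\circ p^{-1}(A)\bigtriangleup p^{-1}(A))=\mu(T_n^\circ A\bigtriangleup A)\to0$ by centrality. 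For a cylinder set, the hypothesis $T_n\in[\cals]$ means $\gamma:=T_nx\in\cals_x$ acts within fibres, and a direct computation in the above coordinates yields
\[\bigl((T_nx)z\bigr)_i=\alpha(k_i^{-1})\,z_{\sigma(i)},\]
where $\sigma$ is the permutation of $\{1,\dots,m\}$ induced by left multiplication by $T_nx$ on the coset space $\calg^x/\cals$ and $k_i\in\cals$ is the associated difference term. Consequently $\tilde T_n^\circ\{z:z_{i_0}\in B\}$ is again a cylinder set, in coordinate $\sigma^{-1}(i_0)$ with $B$ replaced by $\alpha(k_{\sigma^{-1}(i_0)})^{-1}B$.

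The computation therefore reduces to two facts whose proof I expect to be the main obstacle: the permutation $\sigma$ is asymptotically trivial, and the twist satisfies $\int_X\nu(\alpha(k_{i_0})B\bigtriangleup B)\,d\mu(x)\to0$. In the model case of a central subgroup $\cals=C\le G$, left multiplication by $c=T_nx\in C$ fixes every coset $gC$ and the difference term is the central element $c$ itself, so $\sigma=\mathrm{id}$ and the fibre estimate is exactly $\int_X\nu(\alpha(T_nx)B\bigtriangleup B)\,d\mu(x)\to0$, which is the hypothesis. In the general groupoid setting neither simplification is automatic, and the plan is to extract both from centrality of $(T_n)$: the $\calg$-action on the coset bundle $\calg/\cals$ is a discrete p.m.p.\ structure on which $(T_n)$ projects to an asymptotically invariant sequence, forcing $\sigma$ to fix each coordinate off a set of vanishing measure; once $\sigma$ is asymptotically trivial, centrality further allows one to compare the conjugated difference terms $k_{i_0}$ with $T_nx$ and bound the fibre estimate by the hypothesis. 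Combining the base and fibre estimates gives \textbf{(ii)} on the generating family, and a routine approximation argument extends it to all Borel $\tilde A\subset Z$.
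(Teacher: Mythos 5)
Your overall architecture coincides with the paper's: commutation of $\tilde T_n$ with lifts is immediate from centrality and the homomorphism property of lifting; by Remark \ref{rem-ai-ac} everything reduces to asymptotic invariance of cylinder sets; and in coordinates the action of $T_nx$ is a permutation $\sigma$ of the $\cals$-cosets of $\calg^x$ composed with twists by $\alpha$ of the difference terms. But at precisely this point the proof stops: you state the two facts that constitute the entire content of the proposition --- $\sigma$ is asymptotically trivial, and the twist is asymptotically controlled by the hypothesis --- and offer only a plan for them, a plan which does not work as stated. You claim that centrality makes $(T_n)$ ``project to an asymptotically invariant sequence'' on the coset bundle $\calg/\cals$, forcing $\sigma$ to fix coordinates. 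Centrality of $(T_n)$ in $[\calg]$ is a statement about commutation with elements of $[\calg]$ and about Borel subsets of $X$; it says nothing directly about the coordinate sections of the coset bundle. Indeed the desired conclusion fails for central sequences not lying in $[\cals]$ (take $\cals=\calg^0$, so that cosets are single groupoid elements and left translation by $T_nx$ never fixes them), and it also fails for non-central sequences in $[\cals]$ (take $\calg=G\ltimes X$, $\cals=H\ltimes X$ with $H<G$ non-normal and $T_n$ a constant map valued in $H$). So any proof must genuinely interleave the two hypotheses, and asserting asymptotic invariance on the coset bundle merely renames the problem to be solved.

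The missing device, which is the heart of the paper's proof, is to test centrality against the maps $\phi_i\coloneqq\psi_i^{-1}$, $\phi_i(x)=\psi_i(x)^{-1}$, which are countable unions of local sections of $\calg$. On the set where $(\phi_i\circ T_n)x=(T_n\circ\phi_i)x$ --- a set of measure tending to $1$ by centrality --- one gets the exact identity $(T_nx)^{-1}\psi_i(T_n^\circ x)=\psi_i(x)\,T_n(\bar\psi_i(x))^{-1}$, where $\bar\psi_i=s\circ\psi_i$. Since $T_n\in[\cals]$, the element $T_n(\bar\psi_i(x))$ lies in $\cals_{\bar\psi_i(x)}$, so this single identity simultaneously shows $\sigma(i)=i$ and identifies the twist: by the equivariance defining $Z_x$, $(\tilde T_n^\circ f)(\psi_i(T_n^\circ x))=\alpha(T_n(\bar\psi_i(x)))f(\psi_i(x))$. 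Even granting this, your fact about the twist needs one more argument you do not supply: the hypothesis controls $\int_X\nu(\alpha(T_nx)B\bigtriangleup B)\,d\mu(x)$, whereas the twist involves $T_n$ evaluated at $\bar\psi_i(x)$, so one must transfer the integral along $\bar\psi_i$ (e.g.\ using that $(\bar\psi_i)_*\mu$ is absolutely continuous with respect to $\mu$, together with convergence in measure and dominated convergence); your phrase about ``comparing the conjugated difference terms $k_{i_0}$ with $T_nx$'' does not address this change of variables. In short, your reductions are correct and match the paper, but the proof of the two facts you yourself flag as ``the main obstacle'' is exactly what is missing, and the route you sketch for them is not viable without the $\phi_i$-commutation identity above.
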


\begin{proof}
Since $(T_n)$ is central in $[\calg]$, by the definition of lifts, $\tilde{T}_n$ asymptotically commutes with the lift of each $S\in [\calg]$, i.e., $\zeta(\{ \, z\in Z\mid (\tilde{S}\circ \tilde{T}_n)z\neq (\tilde{T}_n\circ \tilde{S})z\, \})\to 0$
for each $S\in [\calg]$.
Hence it suffices to show that for each Borel subset $C\subset Z$, we have $\zeta(\tilde{T}_n^\circ C\bigtriangleup C)\to 0$ (Remark \ref{rem-ai-ac}).
We may suppose that the index of $\cals$ in $\calg$ is the constant $m\in \N \cup \{ \infty \}$.
Let $\{ \psi_i\}_{i=1}^m$ be a family of choice functions for the inclusion $\cals <\calg$ and identify $Z$ with the product space $X\times \prod_{i=1}^m Y$ as being before the proposition.
Then it suffices to show that $\zeta(\tilde{T}_n^\circ C\bigtriangleup C)\to 0$ for each cylindrical subset
\[C=\{ \,  (x, (y_i)_{i=1}^m)\in X\times \textstyle{\prod_{i=1}^m}Y\mid x\in A\ \text{and}\ y_i\in B_i \ \text{for each}\ i\in \{ 1,\ldots, l\}\, \},\]
where $A\subset X$ and $B_1,\ldots, B_l\subset Y$ are Borel subsets and $l$ is a positive integer with $l\leq m$.

Let $\ve >0$.
We set $\bar{\psi}_i=s\circ \psi_i$ and set $\phi_i(x)=\psi_i(x)^{-1}$ for $x\in X$.
Since $\phi_i$ is the union of local sections of $\calg$, the assumption on the central sequence $(T_n)$ implies that there exists an $N\in \N$ such that if $n\geq N$, then
\begin{enumerate}
\item \label{ta-js1} $\mu(T_n^\circ A\bigtriangleup A)<\ve$,
\item $\int_X\nu(\alpha(T_n(\bar{\psi}_i(x)))B_i\bigtriangleup B_i)\, d\mu(x)<\ve/l$ for each $i\in \{ 1,\ldots, l\}$, and
\item \label{a1a} $\mu(A_1)>\mu(A)-\ve$,
\end{enumerate}
where $A_1$ is defined as the set of all elements $x\in A$ such that $(\phi_i\circ T_n)x=(T_n\circ \phi_i)x$ for each $i\in \{ 1,\ldots, l\}$.
Fix $n\in \N$ with $n\geq N$.
We show that $\tilde{T}_n^\circ f\in C$ if $f$ belongs to the set $C_1$, which is slightly smaller than $C$, of all elements $(x, (y_i)_{i=1}^m)\in X\times \prod_{i=1}^mY$ such that
\begin{itemize}
\item $x\in A_1\cap (T_n^\circ )^{-1}A$ and
\item $y_i\in \alpha(T_n(\bar{\psi}_i(x)))^{-1}B_i\cap B_i$ for each $i\in \{ 1,\ldots, l\}$.
\end{itemize}
We pick $f=(x, (y_i)_{i=1}^m)\in C_1$ and set $y=T_n^\circ x$.
For each $i\in \{ 1,\ldots, l\}$, regarding $f$ as a map from $\calg^x$ to $Y$ belonging to the set $Z_x$, we have
\begin{align*}
(\tilde{T}_n^\circ f)(\psi_i(y))&=f((T_nx)^{-1}\psi_i(y))=f(\psi_i(x)T_n(\bar{\psi}_i(x))^{-1})\\
&=\alpha(T_n(\bar{\psi}_i(x)))f(\psi_i(x)),
\end{align*}
where the second equation follows from $x\in A_1$ and $\phi_i^\circ(x)=\bar{\psi}_i(x)$.
The right hand side belongs to $B_i$ because $f(\psi_i(x))=y_i\in \alpha(T_n(\bar{\psi}_i(x)))^{-1}B_i$.
Moreover $\tilde{T}_n^\circ f\in Z_y$ and $y\in A$ because $x\in (T_n^\circ )^{-1}A$.
Therefore $\tilde{T}_n^\circ f\in C$.
As a result, we obtain the inequality
\[\zeta(C\cap (\tilde{T}_n^\circ )^{-1}C)\geq \zeta(C_1)=\int_{A_1\cap (T_n^\circ )^{-1}A} \prod_{i=1}^l\nu(\alpha(T_n(\bar{\psi}_i(x)))^{-1}B_i\cap B_i))\, d\mu(x).\]
The left hand side of this inequality is equal to $\zeta(C)-\zeta(\tilde{T}_n^\circ C\bigtriangleup C)/2$, and the right hand side is equal to
\begin{align*}
&\int_{A_1\cap (T_n^\circ )^{-1}A} \prod_{i=1}^l\biggl(\nu(B_i)-\frac{1}{2}\nu(\alpha(T_n(\bar{\psi}_i(x)))^{-1}B_i\bigtriangleup B_i) \biggr)\, d\mu(x)\\
&>\zeta(C)-\mu(A\setminus (A_1\cap (T_n^\circ )^{-1}A))-\ve/2\\
&\geq \zeta(C)-(\mu(A\setminus A_1)+\mu(A\setminus (T_n^\circ )^{-1}A))-\ve/2>\zeta(C)-2\ve
\end{align*}
by (\ref{ta-js1})--(\ref{a1a}), where to deduce the first inequality, we use the inequality $|\prod_{i=1}^l a_i-\prod_{i=1}^l b_i |\leq \sum_{i=1}^l|a_i-b_i|$ for $a_i, b_i\in [0, 1]$.
Therefore $\zeta(\tilde{T}_n^\circ C\bigtriangleup C)<4\ve$.
\end{proof}


\subsection{Construction of a free action}\label{subsec-sch-ape}

Under the assumption that a countable group $G$ admits a p.m.p.\ Schmidt action, in Theorem \ref{thm-sch-ape}, we present a sufficient condition for $G$ to admit a free p.m.p.\ Schmidt action.
Another sufficient condition will be given in Theorem \ref{thm-sch-per} in Subsection \ref{subsec-var}.
We remark that the analogous problem for stability in place of the Schmidt property is solved in \cite[Theorem 1.4]{kida-srt} with a much simpler method.

For $p\in \N$ and a Borel automorphism $T$ of a standard Borel space $X$, we call a point $x\in X$ a \textit{$p$-periodic point} of $T$ if $T^px=x$ and $T^ix\neq x$ for all $i\in \N$ less than $p$.
If a point $x\in X$ is a $p$-periodic point of $T$ for some $p\in \N$, then $x$ is called a \textit{periodic point} of $T$ and the number $p$ is called the \textit{period} of $x$. 
For possible constraints on periods of $T_n^\circ$ for a central sequence $(T_n)$ in the full group, we refer to \cite[Proposition 8.7]{ktd}.

\begin{thm}\label{thm-sch-ape}
Let $G$ be a countable group, $G \c (X, \mu)$ a p.m.p.\ action and $\pi \colon (X, \mu)\to (\Omega, \eta)$ a $G$-equivariant measure-preserving map into a standard probability space $(\Omega, \eta)$.
Suppose that for $\mu$-almost every $x\in X$, the stabilizer of $x$ in $G$ depends only on $\pi(x)$ and we thus have a subgroup $M_\omega$ of $G$ indexed by $\eta$-almost every $\omega \in \Omega$ such that for $\mu$-almost every $x\in X$, the stabilizer of $x$ in $G$ is equal to $M_{\pi(x)}$.
We set $(\calg, \mu) =G\ltimes (X, \mu)$.

Suppose that there exists a central sequence $(S_n)$ in $[\calg]$ such that
\begin{itemize}
\item for all $n$, $S_n^\circ$ preserves each fiber of $\pi$, i.e., we have $\pi(S_n^\circ x)=\pi(x)$ for $\mu$-almost every $x\in X$, and
\item $\mu(\{ \, x\in X\mid S_n^\circ x\neq x,\, S_nx\in C_G(M_{\pi(x)})\, \})\to 1$ as $n\to \infty$,
\end{itemize}
where for a subgroup $M<G$, we denote by $C_G(M)$ the centralizer of $M$ in $G$.
For $p\in \N$, let $A_n^p\subset X$ be the set of $p$-periodic points of $S_n^\circ$.
Suppose further that for each $p\in \N$, we have $\mu(A_n^p)\to 0$ as $n\to \infty$.
Then $G$ has the Schmidt property.
\end{thm}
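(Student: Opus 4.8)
The plan is to build a \emph{free} p.m.p.\ Schmidt action of $G$ by co-inducing, following the scheme sketched in the introduction but adapted to the varying stabilizers $M_\omega$. Write $\calm$ for the isotropy bundle of $\calg=G\ltimes(X,\mu)$, the group bundle over $X$ with fibre $\calm_x=\{\,(m,x):m\in M_{\pi(x)}\,\}$ over $x$; this is a Borel subgroupoid precisely because the stabilizer of $x$ depends only on $\pi(x)$. The two structural hypotheses on $(S_n)$ are present to make $\calm$ interact trivially with the central sequence. Since $S_n^\circ$ preserves fibres of $\pi$ we have $M_{\pi(S_n^\circ x)}=M_{\pi(x)}$, and since the $G$-value of $S_n$ at $x$ centralizes $M_{\pi(x)}$ on a set of measure tending to $1$, a direct computation in the groupoid yields the commutation relation $S_n(x)\,(m,x)\,S_n(x)^{-1}=(m,S_n^\circ x)$ for $m\in M_{\pi(x)}$. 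Thus the groupoid $\calr$ generated by the $S_n$ normalizes $\calm$ and fixes the fibre labels, so that the subgroupoid $\cals$ generated by $\calm$ and $\calr$ has product-type structure $\calm\cdot\calr$ with $\calm\cap\calr=\calg^0$ (as $\calr$ is to be principal).

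The first substantive step is to replace $(S_n)$ by a central sequence $(T_n)$ for which $\calr$, now generated by all the $T_n$, is principal and hyperfinite, while retaining $\mu(\{\,x:T_n^\circ x\neq x\,\})\to1$ and the commutation relation (up to null sets). This is exactly where the hypothesis $\mu(A_n^p)\to0$ for every $p$ enters: asymptotic aperiodicity of the $S_n^\circ$ is what lets one push the central sequence into a single aperiodic, hence hyperfinite, equivalence relation and arrange that $\calr$ carries no isotropy, so that $\cals\cong\calm\cdot\calr$ really is a product subgroupoid. I expect this replacement, together with the verification that $\cals$ is a well-defined Borel subgroupoid on which the data below assemble into a genuine homomorphism, to be the main obstacle.

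Next I would co-induce. Fix the Bernoulli action $G\c(Y,\nu):=([0,1]^{G},\mathrm{Leb}^{\otimes G})$ and define a Borel homomorphism $\alpha\colon\cals\to\aut(Y,\nu)$ by letting $\alpha(m,x)$ be the shift by $m$ for $(m,x)\in\calm$ and $\alpha\equiv\mathrm{id}$ on $\calr$. This is consistent: applying $\alpha$ to the commutation relation gives $\mathrm{shift}_m$ on both sides, while on $\calm$ it is a fibrewise group homomorphism depending only on $m$. Let $\calg\c(Z,\zeta)$ be the action co-induced from $\cals\c(X\times Y,\mu\times\nu)$ via $\alpha$. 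Since $\alpha(T_n(x))=\mathrm{id}$ for a.e.\ $x$, the integral condition in Proposition \ref{prop-co-induced} holds trivially, so the lifts $(\tilde T_n)$ form a central sequence in $[\calg\ltimes(Z,\zeta)]$.

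Finally I would identify $\calg\ltimes(Z,\zeta)$ with a translation groupoid of an honest $G$-action and check the two remaining properties. Because $\calg=G\ltimes X$, the co-induced groupoid action of $\calg$ on $Z$ is covered by a genuine action $G\c(Z,\zeta)$ via $g\cdot z=(g,p(z))z$, and $G\ltimes(Z,\zeta)=\calg\ltimes(Z,\zeta)$. The stabilizer in $G$ of $z\in Z_x$ consists of those $g\in M_{\pi(x)}$ for which $\mathrm{shift}_g$ fixes the value $f(u_x)\in Y$ of the corresponding $f\in Z_x$ (where $u_x$ is the unit at $x$); freeness of the Bernoulli action forces $g=e$, so $G\c(Z,\zeta)$ is free. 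For non-triviality, if $T_n^\circ x\neq x$ then $\tilde T_n^\circ z$ and $z$ lie in different fibres of $p$, whence $\zeta(\{\,z:\tilde T_n^\circ z\neq z\,\})\ge\mu(\{\,x:T_n^\circ x\neq x\,\})\to1$. Thus $(Z,\zeta)$ carries a free, possibly non-ergodic, p.m.p.\ Schmidt action of $G$; passing to an ergodic component through Lemma \ref{lem-erg-dec}, with freeness inherited, yields the Schmidt property.
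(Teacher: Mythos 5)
Your architecture is exactly the paper's: replace $(S_n)$ by a better central sequence $(T_n)$, form the product subgroupoid $\calm \times_X \cale$ where $\cale$ is the subgroupoid generated by the $T_n$, co-induce from an action of $\calm \vee \cale$ that is free on the $\calm$-part and trivial on the $\cale$-part, apply Proposition \ref{prop-co-induced} to get centrality of the lifts, identify $\calg \ltimes (Z, \zeta)$ with $G\ltimes (Z, \zeta)$, and finish with Lemma \ref{lem-erg-dec}. Your endgame (consistency of $\alpha$, freeness via the free action on $Y$, non-triviality of the lifted sequence) is correct and matches the paper. But the step you flag as ``the main obstacle'' and do not carry out is precisely the heart of the paper's proof, occupying Lemmas \ref{lem-de} and \ref{lem-en-qn-ape}, and it does not follow from generalities. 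Moreover, what is needed is stronger than your phrasing suggests: it is not enough that the equivalence relation generated by the $T_n^\circ$ be principal and hyperfinite as an abstract relation; one needs the subgroupoid $\cale < \calg$ generated by the $T_n$ \emph{as full-group elements} to have trivial isotropy, i.e., whenever a word $T^k$ in the $T_n$ satisfies $(T^k)^\circ x = x$, its $G$-value must satisfy $T^k x = e$, not merely $T^k x \in M_{\pi(x)}$. This is exactly what makes your $\alpha$ well defined: if some $(g, x)\in \cale$ had $gx=x$ and $g\neq e$, then $(g,x)$ would lie in $\calm \cap \cale$, and $\alpha$ would have to be simultaneously the identity and $\mathrm{shift}_g$, which is impossible for a free action on $Y$.

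Arranging this triviality of isotropy is a genuine construction. The paper replaces each $S_n$, on a large set and without destroying centrality, by a \emph{periodic} element $T_n$ satisfying $T_n^p x = e$ on each $p$-periodic orbit (closing up long and aperiodic orbits via Rokhlin towers and explicit inverses at the top of each tower), and then runs an induction making the $T_n$ commute, preserve nested Borel transversals $E_{n+1}^l$, and be $(l, T)$-periodic, so that $\cale$ is isomorphic to the principal relation $\calq$ under the quotient map. The hypothesis $\mu(A_n^p)\to 0$ enters there, and not quite in the way you describe (``aperiodic, hence hyperfinite''): for a fixed $\ve$, it makes negligible, for large $n$, the set of points of period at most $1/\ve$ at which $S_n^p x\neq e$ --- these are the points whose isotropy cannot be removed by any modification, and that regime is handled by the separate Theorem \ref{thm-sch-per}. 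Since your proposal offers no argument for this construction, it has a genuine gap at its core, even though the surrounding co-induction scheme is correct and identical to the paper's.
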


The proof of this theorem will be given after proving Lemmas \ref{lem-de} and \ref{lem-en-qn-ape} below.
For a discrete p.m.p.\ groupoid $(\calg, \mu)$ and an element $T\in [\calg]$, we say that $T$ is \textit{periodic} if for $\mu$-almost every $x\in \calg^0$, there exists a $p\in \N$ such that $x$ is a $p$-periodic point of $T^\circ$ and $T^px=e$.
We should emphasize that $T$ is not necessarily periodic even if every point of $X$ is a periodic point of the induced automorphism $T^\circ$.

\begin{lem}\label{lem-de}
Let $G$ be a countable group, $G \c (X, \mu)$ a p.m.p.\ action and $\pi \colon (X, \mu)\to (\Omega, \eta)$ a $G$-equivariant measure-preserving map satisfying the assumption in the first paragraph in Theorem \ref{thm-sch-ape}.
We set $(\calg, \mu) =G\ltimes (X, \mu)$.

Pick $\ve >0$ and $S\in [\calg]$ such that $S^\circ$ preserves each fiber of $\pi$.
Let $D$ and $E$ be Borel subsets of $X$ with $D\subset E$, and suppose that the following three conditions hold:
\begin{enumerate}
\item \label{D} If $x\in D$, then $S^\circ x\neq x$ and $Sx\in C_G(M_{\pi(x)})$, and if $x\in D$ is further a $p$-periodic point of $S^\circ$ for some $p\in \N$, then either $p>1/\ve$ or $S^px=e$.
\item \label{ede} The inequality $\mu(E\setminus D)<\ve \mu(E)$ holds.
\item \label{SDE} The inclusion $S^\circ D\subset E$ holds.
\end{enumerate}
Then there exists an element $T\in [\calg_E]$ such that
\begin{enumerate}\setcounter{enumi}{3}
\item $T$ is periodic,
\item $T^\circ$ preserves each fiber of $\pi$ and $Tx\in C_G(M_{\pi(x)})$ for each $x\in E$, and
\item $\mu(\{ \, x\in E\mid Tx\neq Sx\, \})<5\ve \mu(E)$.
\end{enumerate}
\end{lem}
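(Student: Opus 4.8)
The plan is to build $T$ by following $S$ along the orbits of the automorphism $S^\circ$ and then ``closing up'' these orbits into finite cycles, so that $T$ becomes genuinely periodic (i.e.\ $T^px=e$ in $G$ for the period $p$) while differing from $S$ only on a small, controlled set. To organize this I would pass to the partial transformation on $E$ that restricts $S^\circ$ to $D$: form the Borel graph on $E$ with an edge $x\to S^\circ x$ exactly when $x\in D$, so that $S^\circ x\in E$ by (\ref{SDE}). Since $S^\circ$ is injective, this partial map has in- and out-degree $\le 1$, and the graph decomposes (Borel-measurably) into finite paths, finite cycles, infinite paths, and isolated points. A point has an outgoing edge iff it lies in $D$, so the points of $E\setminus D$ are precisely the terminal points of paths and the isolated points. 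Crucially, $\pi$ is constant along each component because $S^\circ$ preserves the fibers of $\pi$, and $Sx\in C_G(M_{\pi(x)})$ for every $x\in D$ by (\ref{D}).

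\textbf{Defining $T$ componentwise.} On a finite cycle $x_0,\dots,x_{p-1}$ (necessarily inside $D$, so $x_0$ is a $p$-periodic point of $S^\circ$) I would use (\ref{D}): if $p\le 1/\ve$ then $S^px_0=e$ is forced, and setting $T=S$ on the whole cycle already gives $T^px_0=S^px_0=e$ with no error; if $p>1/\ve$, I keep $Tx_j=Sx_j$ for $j<p-1$ and close up by $Tx_{p-1}=(S^{p-1}x_0)^{-1}$, which returns $x_{p-1}$ to $x_0$ and yields $T^px_0=e$, the only change being at one point, a fraction $1/p<\ve$ of the cycle. On a finite path $x_0,\dots,x_k$ (with $x_0,\dots,x_{k-1}\in D$ and $x_k\in E\setminus D$) I again set $T=S$ on $x_0,\dots,x_{k-1}$ and close up by $Tx_k=(S^kx_0)^{-1}$, turning it into a $(k+1)$-cycle; the only change is at the terminal point $x_k\in E\setminus D$. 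On isolated points I set $T=e$. Each closing element has the form $(S^jx_0)^{-1}=(S(x_{j-1})\cdots S(x_0))^{-1}$, a product of values of $S$ at points of $D$, hence lies in $C_G(M_{\pi(x)})$; combined with the constancy of $\pi$ along components and the fact that $T^\circ$ cyclically permutes each component, this gives conclusions (4) and (5) and shows $T\in[\calg_E]$.

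\textbf{Error bound.} For (6) I would observe that $\{x\in E\mid Tx\ne Sx\}$ is contained in the union of the single cut-points of the long cycles and the infinite-path segments, together with the terminal and isolated points. The cut-points lie in disjoint invariant sets inside $D$ and each contributes at most an $\ve$-fraction, so together at most $\ve\mu(E)$; the terminal and isolated points lie in $E\setminus D$, contributing at most $\mu(E\setminus D)<\ve\mu(E)$ by (\ref{ede}). This gives $\mu(\{x\in E\mid Tx\ne Sx\})<5\ve\mu(E)$ with room to spare.

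\textbf{Main obstacle.} The delicate part is the aperiodic (infinite-path) portion, where the orbits must be cut into finite pieces of \emph{bounded} length in a uniformly Borel way. Infinite runs that possess a canonical Borel endpoint (the unique point with no $D$-predecessor or no $D$-successor) can be cut into blocks of length $\lceil 1/\ve\rceil$ relative to that endpoint, and the resulting terminal errors fall into $E\setminus D$; the genuinely bi-infinite runs lie in an $S^\circ$-invariant aperiodic set, to which I would apply a standard Borel marker argument to produce the finite segments before closing them up as above. The role of the periodicity hypothesis (\ref{D}) is exactly to forbid short cycles on which $S$ fails to be groupoid-periodic, so that no uncontrolled error can arise there, while the marker argument supplies the analogous control on the aperiodic part.
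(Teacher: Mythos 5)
Your proposal is correct and follows essentially the same route as the paper's proof: keep $T=S$ except at one closing point per orbit piece, use hypothesis (1) to handle short cycles, cut long cycles at a Borel fundamental domain, and cut the aperiodic part into long blocks (the paper uses the Rokhlin lemma where you invoke Borel markers), with the same error accounting. The only cosmetic differences are that the paper closes each finite run at its last point inside $D$ and sets $T=e$ on all of $E\setminus D$, and it disposes of your one-sided infinite runs as null sets via measure preservation rather than treating them as a separate case.
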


\begin{proof}
For a positive integer $k$, we set
\[Z_k=\{ \, x\in D\mid  S^\circ x,\, (S^\circ)^2x, \ldots,\, (S^\circ)^{k-1}x\in D,\, (S^\circ)^kx\not\in D\, \}.\]
The sets $Z_k$ are mutually disjoint and satisfy $S^\circ Z_{k+1}\subset Z_k$ and $Z_1= D\setminus (S^\circ)^{-1}D$.
Thus
\[\mu(Z_1)= \mu(D\setminus (S^\circ)^{-1}D)=\mu(S^\circ D\setminus D)\leq \mu(E\setminus D)<\ve \mu(E)\]
by conditions (\ref{ede}) and (\ref{SDE}).

We define a local section $T$ of $\calg$ on $Z_k$ for $k\geq 2$, on $S^\circ Z_2$, and on $Z_1\setminus S^\circ Z_2$, respectively, as follows:
It is defined so that $T$ is periodic and equal to $S$ on a subset as large as possible.
If $x\in Z_k$ and $k\geq 2$, then we set $Tx=Sx$.
For almost every $x\in S^\circ Z_2$, there is a maximal integer $k\geq 2$ such that $x\in (S^\circ )^{k-1}Z_k$, and we let $y\in Z_k$ be the point with $x=(S^\circ)^{k-1}y$ and set $Tx=(S^{k-1}y)^{-1}$.
On $Z_1\setminus S^\circ Z_2$, we set $Tx=e$ for each point $x$ of that set.
We defined the local section $T$ on the union $Z\coloneqq \bigcup_{k=1}^\infty Z_k$ and have the inequality
\begin{equation}\label{t-on-z}
\mu(\{ \, x\in Z\mid Tx\neq Sx\, \})\leq \mu(Z_1)<\ve \mu(E).
\end{equation}

We set $D_1=D\setminus Z$, which is $S^\circ$-invariant.
Let $B$ be the set of points of $D_1$ that are $p$-periodic points of $S^\circ$ for some $p\in \N$.
Let $C$ be the complement of $B$ in $D_1$, i.e., the set of aperiodic points of $S^\circ$ in $D_1$.
For an integer $p\geq 2$, let $B_p$ denote the set of $p$-periodic points of $S^\circ$ in $B$.
Then each $B_p$ is $S^\circ$-invariant, and $B$ is the disjoint union of the sets $B_p$ with $p\geq 2$ since $S^\circ x\neq x$ for each $x\in D$ by condition (\ref{D}).

We extend the domain of $T$ to the set $B$ as follows.
If $p\leq 1/\ve$, then for each $x\in B_p$, we have $S^px=e$ by condition (\ref{D}) and we thus set $T=S$ on $B_p$, so that $T$ is periodic on it.
Otherwise, i.e., if $p>1/\ve$, then pick a Borel fundamental domain $B_p'\subset B_p$ of the periodic automorphism $S^\circ|_{B_p}$.
We set $Tx=Sx$ for $x\in B_p\setminus (S^\circ)^{-1}B_p'$ and set $Tx=(S^{p-1}(S^\circ x))^{-1}$ for $x\in (S^\circ)^{-1}B_p'$.
Then $T^px=e$ for each $x\in B_p$, and we have
\begin{equation}\label{t-on-b}
\mu(\{ \, x\in B\mid Tx\neq Sx\, \})<\ve \mu(E)
\end{equation}
because
\[\mu(\{ \, x\in B\mid Tx\neq Sx\, \})\leq \sum_{p>1/\ve}\mu((S^\circ)^{-1}B_p')=\sum_{p>1/\ve}p^{-1}\mu(B_p)\leq \ve \mu(B)\leq \ve \mu(E).\]

We next define $T$ on $C$, the set of aperiodic points of $S^\circ$ in $D_1$.
Let $N$ be a positive integer with $1/N<\ve \mu(E)$.
By the Rokhlin lemma, we can find a Borel subset $C_0\subset C$ such that $C_0, S^\circ C_0,\ldots, (S^\circ)^{N-1}C_0$ are mutually disjoint and $\mu(C\setminus \bigcup_{n=0}^{N-1}(S^\circ)^nC_0)<\ve \mu(E)$.
We define $T$ on $C$ as follows:
For $x\in C_0$ and $n\in \{ 0, 1,\ldots, N-2\}$, we set $T((S^\circ)^nx)=S((S^\circ)^nx)$ and $T((S^\circ)^{N-1}x)=(S^{N-1}x)^{-1}$.
If $x\in C\setminus \bigcup_{n=0}^{N-1}(S^\circ)^nC_0$, then we set $Tx=e$.
Then $T$ is periodic on $C$ in the sense that each $x\in C$ is a $p$-periodic point of $T^\circ$ for some $p\in \N$ and we then have $T^px=e$.
We also have
\begin{equation}\label{t-on-c}
\mu(\{ \, x\in C\mid Tx\neq Sx\, \})\leq \mu((S^\circ)^{N-1}C_0)+\mu( C\setminus \textstyle{\bigcup_{n=0}^{N-1}}(S^\circ)^nC_0)<2\ve \mu(E).
\end{equation}

Finally we define $T$ on $E\setminus D$ by $Tx=e$ for each $x\in E\setminus D$.
By construction $T^\circ$ is an automorphism of each of $Z$, $B$, $C$ and $E\setminus D$ and hence of $E$.
Thus we defined $T\in [\calg_E]$, which is periodic.
This is a desired one.
Indeed for each $x\in E$, the element $Tx$ is either $e$ or the product of some values of $S$, which belongs to $C_G(M_{\pi(x)})$ by condition (\ref{D}).
Therefore $T$ fulfills condition (5).
By inequalities (\ref{t-on-z})--(\ref{t-on-c}) and condition (2), we have
\[\mu(\{ \, x\in E\mid Tx\neq Sx\, \})<4\ve \mu(E)+\mu(E\setminus D)<5\ve \mu(E).\qedhere\]
\end{proof}

In order to state the next lemma, we prepare the following terminology.
Let $(\calg, \mu)$ be a discrete p.m.p.\ groupoid.
For $T, S\in [\calg]$, we say that $T$ and $S$ \textit{commute} if $T\circ S=S\circ T$.
Let $T=(T_1,\ldots, T_n)$ be a finite sequence of elements of $[\calg]$ such that $T_i$ and $T_j$ commute for all $i$ and $j$.
For $k=(k_1,\ldots, k_n)\in \N^n$, we set
\[T^k=(T_n)^{k_n}\circ \cdots \circ (T_1)^{k_1}.\]
For $l=(l_1,\ldots, l_n)\in \N^n$, we say that a point $x\in \calg^0$ is \textit{$(l, T)$-periodic} if the following two conditions hold:
\begin{itemize}
\item For every $k=(k_1,\ldots, k_n)\in \N^n$, we have $(T^k)^\circ x=x$ if and only if $k_i\equiv 0$ modulo $l_i$ for all $i\in \{ 1,\ldots, n\}$.
\item If this equivalent condition holds, then we have $T^kx=e$ further.
\end{itemize}
For a discrete p.m.p.\ equivalence relation $\calq$ on a standard probability space $(X, \mu)$, we mean by a \textit{Borel transversal} of $\calq$ a Borel subset of $X$ which meets each equivalence class of $\calq$ at exactly one point.


\begin{lem}\label{lem-en-qn-ape}
With the notation and the assumption in Theorem \ref{thm-sch-ape}, let $\calr$ be the orbit equivalence relation associated with the action $G\c (X, \mu)$. 
Then there exists a central sequence $(T_n)_{n\in \N}$ in $[\calg]$ satisfying the following four conditions:
\begin{enumerate}
\item[(i)] We have $\mu(\{ \, x\in X\mid T_n^\circ x\neq x\, \})\to 1$.
\item[(ii)] For each $n$, $T_n^\circ$ preserves each fiber of $\pi$ and $T_nx\in C_G(M_{\pi(x)})$ for all $x\in X$.
\item[(iii)] For each $m$ and $n$, $T_m$ and $T_n$ commute.
\item[(iv)] Let $\calq_n$ be the subrelation of $\calr$ generated by $T_1^\circ,\ldots, T_n^\circ$.
Then there exists a Borel transversal $E_{n+1}\subset X$ of $\calq_n$ and its Borel partition $E_{n+1}=\bigsqcup_{l\in \N^n}E_{n+1}^l$ such that for each $l=(l_1,\ldots, l_n)\in \N^n$,
\begin{itemize}
\item every point of $E_{n+1}^l$ is $(l, T)$-periodic, where $T=(T_1,\ldots, T_n)$,
\item $T_{n+1}^\circ E_{n+1}^l=E_{n+1}^l$, and
\item if $n\geq 2$, then $E_{n+1}^l\subset E_n^{(l_1,\ldots, l_{n-1})}$.
\end{itemize}
In particular, for each $n$, if $\cale_n$ denotes the subgroupoid of $\calg$ generated by $T_1,\ldots, T_n$ (i.e., the minimal subgroupoid of $\calg$ containing $T_1,\ldots, T_n$ in its full group), then $\cale_n$ and $\calq_n$ are isomorphic under the quotient map from $\calg$ onto $\calr$.
\end{enumerate}
\end{lem}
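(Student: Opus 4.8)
The plan is to build $(T_n)$ by induction, the inductive step producing $T_{n+1}$ together with the transversal $E_{n+1}$ and its partition required by (iv) out of $T_1,\dots,T_n$ and the previously built transversal $E_n$. The terms of the given central sequence $(S_n)$ serve as raw material: at each stage I take a term $S_m$ with $m$ large and reshape it into a periodic element that commutes with all previously constructed $T_i$, invoking Lemma \ref{lem-de} to install periodicity. Properties (ii) and the non-triviality in (i) are carried along from $S_m$, whereas (iii) and (iv) are imposed structurally. The hypothesis $\mu(A_m^p)\to 0$ for each fixed $p$ plays a double role: it guarantees that $S_m^\circ$ has asymptotically no short periodic orbits, which is what both supplies the non-triviality in (i) and lets the ``$p>1/\ve$'' branch of Lemma \ref{lem-de} be used on a set of nearly full measure.

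The device that makes exact commutativity (iii) attainable is reduction to a transversal. Since $\calq_{n-1}$ has finite classes, I restrict to the transversal $E_n$; among $T_1^\circ,\dots,T_n^\circ$ only $T_n^\circ$ preserves $E_n$, so inside the reduced groupoid $\calg_{E_n}$ there is a single distinguished periodic element to commute with. Passing further to a transversal $E_{n+1}\subset E_n$ of the relation generated by $T_n^\circ|_{E_n}$ — hence a transversal of $\calq_n$, with the nesting $E_{n+1}\subset E_n$ built in — I extract from $S_m$ the induced element on $E_{n+1}$, an element of $[\calg_{E_{n+1}}]$. Any element constructed in $[\calg_{E_{n+1}}]$ extends uniquely to an element of $[\calg]$ commuting with $\cale_n=\langle T_1,\dots,T_n\rangle$ by copying it along $\calq_n$-orbits, via $T_{n+1}\big((T^k)^\circ x\big)=(T^k)_{T_{n+1}^\circ x}\,T_{n+1}(x)\,((T^k)_x)^{-1}$ for $x\in E_{n+1}$ and $k$ ranging over the relevant product of cyclic groups; this is exactly what forces $T_{n+1}^\circ$ to preserve $E_{n+1}$ and to commute with $T_1,\dots,T_n$.

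To the extracted element of $[\calg_{E_{n+1}}]$ I apply Lemma \ref{lem-de}, with $E=E_{n+1}$ and $D$ the subset on which it is non-trivial, sends points into the centralizers $C_G(M_{\pi(x)})$, and has no periodic points of period at most $1/\ve$; choosing $m$ large makes hypotheses (1)--(3) hold off a set of measure $<\ve\mu(E_{n+1})$. Lemma \ref{lem-de} then returns a periodic element $U$, close to the extracted one, whose $p$-periodic orbits satisfy $U^p=e$. Extending $U$ by the displayed formula yields $T_{n+1}$. The relation $U^p=e$ on periodic orbits transfers to the $(l,T)$-periodicity of every point of the corresponding piece, which simultaneously produces the partition $E_{n+1}=\bigsqcup_l E_{n+1}^l$ (by intersecting $E_{n+1}$ with the sets $E_n^{(l_1,\dots,l_{n-1})}$ and the $T_{n+1}^\circ$-period level sets), the invariance $T_{n+1}^\circ E_{n+1}^l=E_{n+1}^l$, and the isomorphism $\cale_{n+1}\cong\calq_{n+1}$ under the quotient map. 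Condition (ii) and the non-triviality of $T_{n+1}^\circ$ descend from the corresponding properties of $S_m$ on the large good set, preserved up to the small error permitted by Lemma \ref{lem-de}.

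The main obstacle is checking that the finished sequence $(T_n)$ is central in $[\calg]$. Because each $T_n$ is a thoroughly reshaped version of $S_{m(n)}$ — extracted on a transversal and then spread out by commutativity — it is not close to $S_{m(n)}$ in measure on all of $X$, so centrality cannot simply be inherited. I would instead argue through the correspondence between central sequences in $[\calg]$ and in the reduced full groups $[\calg_{E_{n+1}}]$: the element extracted on $E_{n+1}$ is central there because reduction of a central sequence is central (\cite{js}), and by Remark \ref{rem-ai-ac} it then suffices to verify asymptotic commutation with a fixed countable generating subgroup of $[\calg]$ together with asymptotic invariance of Borel sets. Driving the error parameter $\ve=\ve_n\to 0$ and the indices $m=m(n)\to\infty$ along the recursion, and using that $\calg$ is generated by $\cale_n$ — with which $T_{n+1}$ commutes exactly — together with $\calg_{E_{n+1}}$, forces asymptotic commutation and $\mu(\{\,x\mid T_n^\circ x\neq x\,\})\to 1$ in the limit, giving (i). Keeping these estimates consistent across the nested reductions is the delicate bookkeeping at the heart of the proof.
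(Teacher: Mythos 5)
Your construction of the $T_n$ themselves coincides in essence with the paper's: induction along nested transversals, Lemma \ref{lem-de} applied on transversal pieces to install periodicity, and the exact-commutation extension formula $T_n((T^k)^\circ x)=((T^k\circ T_n)x)(T^kx)^{-1}$, which yields (iii) and (iv). The genuine gap is in the centrality verification, and it originates in a claim of yours that is false: that the finished $T_n$ ``is not close to $S_{m(n)}$ in measure on all of $X$, so centrality cannot simply be inherited.'' The paper's key point is exactly the opposite. Before applying Lemma \ref{lem-de}, one puts into the good set $D$ on each transversal piece $E_n^l$ not only non-triviality, centralizer membership and absence of short periods, but also the condition $(S_m\circ T^k)x=(T^k\circ S_m)x$ for the finitely many multi-indices $k$ with $0\leq k_i<l_i$; since $(S_m)$ is central, this extra condition costs only measure $<\ve_n\mu(E_n^l)$ for $m$ large. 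Then, for any good point $x$ at which the modified element agrees with $S_n$, one computes $S_n((T^k)^\circ x)=((T^k\circ S_n)x)(T^kx)^{-1}=((T^k\circ T_n)x)(T^kx)^{-1}=T_n((T^k)^\circ x)$, so the spread-out $T_n$ agrees with $S_n$ on the \emph{entire} $\calq_{n-1}$-class of every good point. Summing over classes gives $\mu(\{\, x\in X\mid T_nx\neq S_nx\,\})<7\ve_n$, and both condition (i) and centrality of $(T_n)$ are then inherited directly from $(S_n)$. You omitted the commutation condition from your $D$, and with it you lost the single estimate that makes the proof close.

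The substitute argument you propose does not repair this. First, ``the element extracted on $E_{n+1}$ is central there'' does not typecheck: you extract one element from one $S_{m(n)}$, centrality is a property of sequences in a fixed full group, and your ambient sets $E_{n+1}$ change with $n$; what is true is only that for each \emph{fixed} finite family in $[\calg_{E_{n+1}}]$ and each $\delta>0$ a large enough $m$ gives $\delta$-commutation, so the finite family must be named before $m(n)$ is chosen. Second, Remark \ref{rem-ai-ac} requires a fixed countable generating subgroup of $[\calg]$, whereas your generating families $\cale_n\cup[\calg_{E_{n+1}}]$ vary with $n$: decomposing a fixed test element $S\in[\calg]$ through the $n$-th transversal produces elements of $[\calg_{E_{n+1}}]$ that are not known until stage $n$, and you give no uniform estimate showing that the chosen $m(n)$ handles them. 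One could conceivably rescue this by fixing in advance a countable dense family in $[\calg]$ and of Borel sets, decomposing each test element relative to each stage's transversal, and feeding the resulting finitely many pieces into the choice of $m(n)$; but that diagonalization is a substantial argument you have not supplied, and it is entirely avoided by the closeness estimate $\mu(\{T_nx\neq S_nx\})<7\ve_n$ on which the paper's proof rests.
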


\begin{proof}
Fix a decreasing sequence $(\ve_n)_{n\in \N}$ of positive numbers converging to $0$.
We inductively construct a sequence $(T_n, E_{n+1})_{n\in \N}$ of pairs satisfying conditions (ii)--(iv) and the inequality $\mu(\{ \, x\in X\mid T_nx\neq S_nx\, \})<7\ve_n$ for all $n$.
This inequality implies condition (i) and also implies that the sequence $(T_n)_{n\in \N}$ is central in $[\calg]$.

In Theorem \ref{thm-sch-ape}, we assume that for each $p\in \N$, we have $\mu(A_n^p)\to 0$ as $n\to \infty$, where $A_n^p$ is the set of $p$-periodic points of $S_n^\circ$.
After replacing $S_1$ with $S_n$ for a large $n$, we may assume that $\mu(X\setminus D_1)<\ve_1$, where $D_1$ is defined as the set of points $x\in X$ such that $S_1^\circ x\neq x$, $S_1x\in C_G(M_{\pi(x)})$, and if $x$ is a $p$-periodic point of $S_1^\circ$ for some $p\in \N$, then $p>1/\ve_1$.
Letting $D=D_1$ and $E=X$, we apply Lemma \ref{lem-de}.
We then obtain a periodic $T_1\in [\calg]$ such that $T_1^\circ$ preserves each fiber of $\pi$; we have $T_1x\in C_G(M_{\pi(x)})$ for almost every $x\in X$; and $\mu(\{ \, x\in X\mid T_1x\neq S_1x\, \})<5\ve_1<7\ve_1$.
Since $T_1$ is periodic, we can find a Borel fundamental domain $E_2\subset X$ for the automorphism $T_1^\circ$ of $X$ and its Borel partition $E_2=\bigsqcup_{l\in \N}E_2^l$ such that $\calq_1E_2^l$ is equal to the set of $l$-periodic points of $T_1^\circ$, where $\calq_1$ is the subrelation of $\calr$ generated by $T_1^\circ$.
The first step of the induction completes.

Assuming that we have constructed $T_1,\ldots, T_{n-1}$ and $E_2,\ldots, E_n$, we construct $T_n$ and $E_{n+1}$.
By induction hypothesis, the equivalence relation $\calq_{n-1}$ generated by $T_1^\circ,\ldots, T_{n-1}^\circ$ admits a Borel transversal $E_n\subset X$ and its Borel partition $E_n=\bigsqcup_{l\in \N^{n-1}}E_n^l$ such that for each $l\in (l_1,\ldots, l_{n-1})\in \N^{n-1}$, every point of $E_n^l$ is $(l, T)$-periodic, where we set $T=(T_1,\ldots, T_{n-1})$.
We choose a finite subset $L_n\subset \N^{n-1}$ such that $\mu(E_n^l)>0$ for all $l\in L_n$ and
\begin{equation}\label{qf}
\mu(X\setminus \calq_{n-1}F_n)<\ve_n,
\end{equation}
where we set $F_n=\bigsqcup_{l\in L_n}E_n^l$.
After replacing $S_n$ with $S_m$ for a large $m$, we may assume that
\begin{equation}\label{enldnl}
\mu(E_n^l\setminus D_n^l)<\ve_n\mu(E_n^l)
\end{equation}
for each $l\in L_n$ if $D_n^l$ is defined as the set of points $x\in E_n^l$ such that
\begin{itemize}
\item $x\in E_n^l\cap ((S_n^\circ)^{-1}E_n^l)$, $S_n^\circ x\neq x$ and $S_nx\in C_G(M_{\pi(x)})$,
\item if $x$ is a $p$-periodic point of $S_n^\circ$ for some $p\in \N$, then $p>1/\ve_n$, and
\item $(S_n\circ T^k)x=(T^k\circ S_n)x$ for each $k=(k_1,\ldots, k_{n-1})\in \Phi_l$,
\end{itemize}
where we set
\[\Phi_l=\{ 0, 1,\ldots, l_1-1\} \times \{ 0, 1,\ldots, l_2-1\} \times \cdots \times \{ 0, 1,\ldots, l_{n-1}-1\}.\]
Letting $D=D_n^l$ and $E=E_n^l$, we apply Lemma \ref{lem-de} for each $l\in L_n$.
Then there exists a periodic $T_n\in [\calg_{F_n}]$ such that $T_n^\circ$ preserves each $E_n^l$ with $l\in L_n$; we have $T_nx\in C_G(M_{\pi(x)})$ for almost every $x\in F_n$; and for each $l\in L_n$, we have
\begin{equation}\label{enltnx}
\mu(\{ \, x\in E_n^l\mid T_nx\neq S_nx\, \})<5\ve_n\mu(E_n^l).
\end{equation}

We extend the local section $T_n$ to the set $\calq_{n-1}F_n$ so that it commutes with $T_1,\ldots, T_{n-1}$.
That is, if $l\in (l_1,\ldots, l_{n-1})\in L_n$ and $x\in E_n^l$, then we set
\[T_n((T^k)^\circ x)=((T^k\circ T_n)x)(T^kx)^{-1}\]
for $k=(k_1,\ldots, k_{n-1})\in \Phi_l$. 
We note that by condition (iv) for $T_1,\ldots, T_{n-1}$, which is an induction hypothesis, each point of $\calq_{n-1}F_n$ is uniquely written as $(T^k)^\circ x$ for some $k\in \Phi_l$ and $x\in E_n^l$ with $l\in L_n$.
Finally we define $T_n$ on $X\setminus \calq_{n-1}F_n$ by $T_nx=e$ for each point $x$ in that set.
Then the element $T_n\in [\calg]$ satisfies conditions (ii) and (iii).
By construction, $T_n^\circ$ preserves each $E_n^l$ with $l\in L_n$ and also preserves the other $E_n^l$ with $l\in \N^{n-1}\setminus L_n$ since $T_n^\circ$ is the identity on it.

Let $\calq_n$ be the subrelation of $\calr$ generated by $T_1^\circ,\ldots, T_n^\circ$.
We find a Borel transversal $E_{n+1}\subset X$ of $\calq_n$ satisfying condition (iv).
Since $T_n^\circ$ preserves each $E_n^l$ with $l\in \N^{n-1}$ and is periodic, we can choose a Borel fundamental domain $B_n^l$ for the automorphism $T_n^\circ$ of $E_n^l$ and its Borel partition $B_n^l=\bigsqcup_{m\in \N}E_n^{l, m}$ such that $E_n^{l, m}$ consists of $m$-periodic points of $T_n^\circ$.
Pick $l=(l_1,\ldots, l_{n-1})\in \N^{n-1}$ and $m\in \N$ and put $k=(l_1,\ldots, l_{n-1}, m)\in \N^n$.
If $l\in L_n$, we set $E_{n+1}^k=E_n^{l, m}$.
Otherwise we have $B_n^l=E_n^{l, 1}$.
We then set $E_{n+1}^k=E_n^l$ or $E_{n+1}^k=\emptyset$, depending on $m=1$ or $m\neq 1$, respectively, and set $E_{n+1}=\bigsqcup_{k\in \N^n}E_{n+1}^k$.
This partition fulfills condition (iv), except for the equation involving $T_{n+1}$ still not defined.

Finally we estimate the measure $\mu(\{ \, x\in X\mid T_nx\neq S_nx\, \})$.
If $x\in D_n^l$ with $l\in L_n$ and $T_nx= S_nx$, then for each $k=(k_1,\ldots, k_{n-1})\in \Phi_l$, we have
\begin{align*}
S_n((T^k)^\circ x)=((T^k\circ S_n)x)(T^kx)^{-1}=((T^k\circ T_n)x)(T^kx)^{-1}=T_n((T^k)^\circ x),
\end{align*}
where the first equation follows from $x\in D_n^l$, the second one follows from $T_nx=S_nx$, and the third one holds by the definition of $T_n$.
Hence we have $T_n=S_n$ on the equivalence class of $x$ in $\calq_{n-1}$.
The set $\{ \, x\in X\mid T_nx\neq S_nx\, \}$ is thus contained in the union
\[(X\setminus \calq_{n-1}F_n)\cup \bigcup_{l\in L_n}\calq_{n-1}\{ \, x\in E_n^l\mid x\not\in D_n^l \ \text{or}\ T_nx\neq S_nx\, \}.\]
By inequalities (\ref{qf}), (\ref{enldnl}) and (\ref{enltnx}), the measure of this union is less than
\begin{align*}
&\ve_n+\sum_{l=(l_1,\ldots, l_{n-1})\in L_n}(l_1+\cdots +l_{n-1})(\mu(E_n^l\setminus D_n^l)+\mu(\{ \, x\in E_n^l\mid T_nx\neq S_nx\, \}))\\
& <  \ve_n+\sum_{l=(l_1,\ldots, l_{n-1})\in L_n}(l_1+\cdots +l_{n-1})(\ve_n+5\ve_n)\mu(E_n^l)\leq 7\ve_n,
\end{align*}
where the sum $\sum_l(l_1+\cdots +l_{n-1})\mu(E_n^l)$ over $l=(l_1,\ldots, l_{n-1})\in L_n$ is equal to $\mu(\calq_{n-1}F_n)$ by condition (iv) and hence at most $1$.
We thus have $\mu(\{ \, x\in X\mid T_nx\neq S_nx \, \})<7\ve_n$.
The induction completes.
\end{proof}

\begin{proof}[Proof of Theorem \ref{thm-sch-ape}]
By Lemma \ref{lem-en-qn-ape}, we obtain a central sequence $(T_n)$ in $[\calg]$ satisfying conditions (i)--(iv) in the lemma.
Let $\cale$ and $\calq$ be the unions $\bigcup_n\cale_n$ and $\bigcup_n\calq_n$, respectively, where we use the symbols $\cale_n$, $\calq_n$ in the lemma.
Then $\calq$ is a subrelation of $\calr$, and by condition (iv), $\cale$ is a subgroupoid of $\calg$ isomorphic to $\calq$ via the quotient map from $\calg$ onto $\calr$.
Let $\calm$ be the isotropy subgroupoid of $\calg$, which is the bundle $\bigsqcup_{x\in X}M_{\pi(x)}$ over $X$.
Let $\calm \times_X\cale$ be the fibered product with respect to the range map of $\cale$.
Then $(\calm \times_X \cale, \mu)$ is a discrete p.m.p.\ groupoid with unit space $X$.
Indeed the range and source of $(m, (g, x) )\in \calm \times_X\cale$ are defined to be $gx$ and $x$, respectively.
The product operation in $\calm \times_X\cale$ is defined by $(m, (g, hx))(l, (h, x))=(ml, (gh, x))$ for $(g, hx), (h, x)\in \cale$ and $m, l\in M_{\pi(x)}$, where we note that $\pi(ghx)=\pi(hx)=\pi(x)$ since all $T_n^\circ$ preserve each fiber of $\pi$.
Let $\calm \vee \cale$ be the subgroupoid of $\calg$ generated by $\calm$ and $\cale$.
By condition (ii), if $(g, x)\in \cale$, then $g$ commutes with each element of $M_{\pi(x)}$.
Therefore the map from $\calm \times_X \cale$ to $\calm \vee \cale$ sending $(m, (g, x))$ to $(mg, x)$ is a homomorphism and thus an isomorphism.

Let $\bar{\calm}$ be the subgroupoid of $G\ltimes (\Omega, \eta)$ that is the bundle $\bigsqcup_{\omega \in \Omega}M_\omega$.
We obtain the homomorphism from $\calm \vee \cale$ onto $\bar{\calm}$ as the composition of the isomorphism from $\calm \vee \cale$ onto $\calm \times_X \cale$, with the projection from $\calm \times_X \cale$ onto $\bar{\calm}$.
Pick a Borel homomorphism $\alpha_0\colon \bar{\calm} \to \mathrm{Aut}(Y, \nu)$ with some standard probability space $(Y, \nu)$ such that the associated action of $\bar{\calm}$ on $(Y, \nu)$ is essentially free, i.e., we have $\alpha_0(m)y\neq y$ for almost every $y\in Y$ and almost every $m\in \bar{\calm} \setminus \bar{\calm}^0$, where $\bar{\calm}$ is equipped with the measure $\int_\Omega c_\omega \, d\eta(\omega)$ with $c_\omega$ the counting measure on $M_\omega$.
Such $\alpha_0$ is obtained as follows:
Pick a free p.m.p.\ action $G\c (Y, \nu)$.
Via the projection from $G\ltimes (\Omega, \eta)$ onto $G$, we obtain the homomorphism from $G\ltimes (\Omega, \eta)$ into $\mathrm{Aut}(Y, \nu)$.
Let $\alpha_0$ be its restriction to $\bar{\calm}$.
Then the action $\alpha_0$ is essentially free.
Let $\calm \vee \cale$ act on $(Y, \nu)$ via the homomorphism from $\calm \vee \cale$ onto $\bar{\calm}$, and denote this action by $\alpha \colon \calm \vee \cale \to \mathrm{Aut}(Y, \nu)$.

We now apply Proposition \ref{prop-co-induced} by letting $\cals =\calm \vee \cale$.
Note that the central sequence $(T_n)$ satisfies the assumption in the proposition, that is, for each Borel subset $B\subset Y$, we have $\int_X \nu(\alpha(T_nx)B\bigtriangleup B)\, d\mu(x)\to 0$ as $n\to \infty$, because $\cale$ acts on $Y$ trivially and thus $\alpha(T_nx)$ is the identity for every $x\in X$.
By the proposition, the sequence $(\tilde{T}_n)$ of the lift of $T_n$ is central in the full group of the groupoid $(\tilde{\calg}, \tilde{\mu})$, where we let $\calg \c (Z, \zeta)$ be the action co-induced from the action $\alpha \colon \calm \vee \cale \to \mathrm{Aut}(Y, \nu)$ and let $(\tilde{\calg}, \tilde{\mu})=(\calg, \mu)\ltimes (Z, \zeta)$ be the groupoid associated with this co-induced action, introduced right before the proposition.
Recall that $\tilde{\calg}$ is the fibered product $\calg \times_X Z$ with respect to the source map $s\colon \calg \to X$ and is a groupoid with unit space $Z$.

If we define an action of $G$ on $Z$ by $gz=(g, x)z$ for $g\in G$ and $z\in Z_x$ with $x\in X$, then this action preserves the measure $\zeta$ and $(\tilde{\calg}, \tilde{\mu})$ is identified with the translation groupoid $G\ltimes (Z, \zeta)$ via the map $((g, x), z)\mapsto (g, z)$ for $g\in G$ and $z\in Z_x$ with $x\in X$.
The action $G\c (Z, \zeta)$ is free because the action of $\bar{\calm}$ on $(Y, \nu)$ is free.
Therefore we obtained the free p.m.p.\ action $G\c (Z, \zeta)$ such that the groupoid $G\ltimes (Z, \zeta)$ is Schmidt.
By Lemma \ref{lem-erg-dec}, $G$ admits a free ergodic p.m.p.\ action which is Schmidt.
\end{proof}


\subsection{Central sequences and periodic points}\label{subsec-cent-per}

In Theorem \ref{thm-sch-ape}, we assumed the central sequence $(S_n)$ to satisfy the property that for each $p\in \N$, the set of $p$-periodic points of the automorphism $S_n^\circ$ has measure approaching $0$.
On the other hand, in Theorem \ref{thm-sch-per} in the next subsection, we focus on a central sequence $(S_n)$ without this property.
This subsection deals with such a central sequence toward the proof of Theorem \ref{thm-sch-per}.

In the rest of this subsection, we fix the following notation:
Let $G$ be a countable group and $M$ a normal subgroup of $G$.
Let $G/M\c (X, \mu)$ be a free ergodic p.m.p.\ action and let $G$ act on $(X, \mu)$ through the quotient map from $G$ onto $G/M$.
We set $(\calg, \mu) =G\ltimes (X, \mu)$.

\begin{lem}\label{lem-sigma-ai}
Let $(S_n)_{n\in \N}$ be a central sequence in $[\calg]$.
For $n, p\in \N$ and $h\in M$, we set
\[A_n^p  = \{ \, x\in X\mid \text{$x$ is a $p$-periodic point of $S_n^\circ$}\, \} \ \  \text{and} \ \ A_n^{p, h} =\{ \, x\in A_n^p\mid (S_n)^px=h\, \}.\]
Then
\begin{enumerate}
\item[(i)] the sequence $(A_n^p)_n$ is asymptotically invariant for $\calg$.
\item[(ii)] If $h$ is central in $G$, then the sequence $(A_n^{p, h})_n$ is asymptotically invariant for $\calg$.
\end{enumerate}
\end{lem}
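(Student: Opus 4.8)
The plan is to check asymptotic invariance only against the generating subgroup $G<[\calg]$, which by Remark \ref{rem-ai-ac} suffices since $G$ generates the translation groupoid $\calg$. Throughout I identify each element of $[\calg]$ with its associated map $X\to G$, so that $g\in G$ is the constant section $\phi_g$ with $\phi_g^\circ x=gx$ and $gA_n^p=\phi_g^\circ(A_n^p)$. Fix $g\in G$ and $p\in\N$. Unwinding the composition rule, centrality of $(S_n)$ against $\phi_g$ amounts to $(S_n\circ\phi_g)(x)=S_n(gx)\,g$ and $(\phi_g\circ S_n)(x)=g\,S_n(x)$ agreeing off a set whose measure tends to $0$; that is, the good set
\[ B_n=\{\,x\in X\mid S_n(gx)=g\,S_n(x)\,g^{-1}\,\} \]
satisfies $\mu(X\setminus B_n)\to 0$.

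The heart of the argument is to propagate this relation along $S_n^\circ$-orbit segments. On $B_n$ one computes $S_n^\circ(gy)=S_n(gy)\,gy=g\,(S_n^\circ y)$, so $S_n^\circ$ commutes with left translation by $g$ wherever $B_n$ holds. I would therefore take the good set
\[ G_n=\bigcap_{i=0}^{p-1}(S_n^\circ)^{-i}B_n, \]
which satisfies $\mu(X\setminus G_n)\le p\,\mu(X\setminus B_n)\to 0$ because $S_n^\circ$ is measure preserving and $p$ is fixed. An immediate induction gives $(S_n^\circ)^i(gx)=g\,(S_n^\circ)^i x$ for all $x\in G_n$ and $0\le i\le p$; and writing $(S_n)^p(x)=S_n((S_n^\circ)^{p-1}x)\cdots S_n(S_n^\circ x)\,S_n(x)$ and applying the relation defining $B_n$ at each point $(S_n^\circ)^i x$ collapses the conjugations into
\[ (S_n)^p(gx)=g\,(S_n)^p(x)\,g^{-1}\qquad(x\in G_n). \]

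For part (i), the orbit identity together with injectivity of left translation by $g$ shows that for $x\in G_n$ the point $gx$ is $p$-periodic for $S_n^\circ$ precisely when $x$ is, so $A_n^p\bigtriangleup g^{-1}A_n^p\subset X\setminus G_n$ and, $\phi_g^\circ$ being measure preserving, $\mu(gA_n^p\bigtriangleup A_n^p)=\mu(A_n^p\bigtriangleup g^{-1}A_n^p)\le\mu(X\setminus G_n)\to 0$. For part (ii), the conjugation formula gives $(S_n)^p(gx)=g\,(S_n)^p(x)\,g^{-1}$ for $x\in G_n\cap A_n^p$, which equals $h$ if and only if $(S_n)^p(x)=g^{-1}hg$; since $h$ is central in $G$ we have $g^{-1}hg=h$, so together with part (i) this yields $x\in A_n^{p,h}\iff gx\in A_n^{p,h}$ on $G_n$, and the same estimate gives $\mu(gA_n^{p,h}\bigtriangleup A_n^{p,h})\to 0$. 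I expect the only delicate point to be the uniform control of the exceptional set under the $p$ iterations, which is exactly why the good set is taken to be the intersection of the $p$ preimages $(S_n^\circ)^{-i}B_n$ (harmless as $p$ and $g$ are fixed while $n\to\infty$); the conjugation identity for the isotropy value $(S_n)^p$, where the centrality hypothesis on $h$ enters, is the crux of part (ii).
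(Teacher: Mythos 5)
Your proof is correct and takes essentially the same approach as the paper's: both propagate the asymptotic commutation of $S_n$ with $g$ along $S_n^\circ$-orbit segments of length $p$ (via the intersected preimages of the good set), deduce that $g$ carries $p$-periodic points to $p$-periodic points, and collapse the conjugation $(S_n)^p(gx)=g\,(S_n)^p(x)\,g^{-1}$ to $h$ using centrality of $h$. The only cosmetic difference is that the paper verifies part (i) against arbitrary $\phi\in[\calg]$ directly, whereas you reduce both parts to the constant sections $\phi_g$ via Remark \ref{rem-ai-ac}, which is equally valid.
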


\begin{proof}
Pick $\phi \in [\calg]$.
If $n$ is large, then the set
\[\{ \, x\in X\mid (\phi \circ (S_n)^i) x=((S_n)^i \circ \phi) x\ \text{for each $i\in \{ 1,\ldots, p\}$}\, \}\]
has measure close to $1$.
If $x\in A_n^p$ belongs to this set, then $(S_n^\circ)^i(\phi^\circ x)=\phi^\circ ((S_n^\circ)^i x)$ for each $i\in \{ 1, \ldots, p\}$.
The right hand side of this equation is not equal to $\phi^\circ x$ if $i<p$, and is equal to $\phi^\circ x$ if $i=p$.
Hence $\phi^\circ x$ is a $p$-periodic point of $S_n^\circ$ and belongs to $A_n^p$.
We thus have $\mu(\phi^\circ A_n^p \bigtriangleup A_n^p)\to 0$ as $n\to \infty$.
Assertion (i) follows.

To prove assertion (ii), we pick $g\in G$.
If $n$ is large, then the set
\[\{ \, x\in X\mid (\phi_g \circ (S_n)^p) x=((S_n)^p \circ \phi_g) x\, \}\]
has measure close to $1$.
If a point $x\in A_n^{p, h}$ belongs to this set, then
\[((S_n)^p(gx))g=((S_n)^p\circ \phi_g)x=(\phi_g\circ (S_n)^p)x=gh\]
and thus $(S_n)^p(gx)=ghg^{-1}=h$ if $h$ is central in $G$.
Combining this with assertion (i), we have $\mu(gA_n^{p, h}\bigtriangleup A_n^{p, h})\to 0$ as $n\to \infty$.
Assertion (ii) follows.
\end{proof}

\begin{lem}\label{lem-cgm-ai}
Let $(S_n)_{n\in \N}$ be a central sequence in $[\calg]$ and let $N$ be a normal subgroup of $G$.
Then the sequence $(A_n)$ defined by $A_n=\{ \, x\in X\mid S_nx\in N\, \}$ is asymptotically invariant for $\calg$.
\end{lem}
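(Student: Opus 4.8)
The plan is to verify asymptotic invariance on a generating set and then invoke Remark \ref{rem-ai-ac}. Viewing $G$ inside $[\calg]$ through the constant sections $\phi_g$, the group $G$ generates the translation groupoid $\calg=G\ltimes(X,\mu)$, since every groupoid element $(g,x)$ equals $\phi_g(x)$. Thus, by Remark \ref{rem-ai-ac}, it suffices to prove that $\mu(\phi_g^\circ A_n\bigtriangleup A_n)\to 0$ for each fixed $g\in G$.

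The engine is the same commutation computation that underlies Lemma \ref{lem-sigma-ai}. Identifying each full-group element with its associated map into $G$, the composition formula gives $(\phi_g\circ S_n)(x)=g\,S_n(x)$ and $(S_n\circ \phi_g)(x)=S_n(gx)\,g$, these being equal as groupoid elements precisely when $S_n(gx)=g\,S_n(x)\,g^{-1}$ in $G$. Hence, setting $C_n^g=\{\,x\in X\mid S_n(gx)=g\,S_n(x)\,g^{-1}\,\}$, centrality of $(S_n)$ yields $\mu(X\setminus C_n^g)\to 0$ for every fixed $g\in G$, and in particular for $g^{-1}$ in place of $g$.

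It remains to transfer membership in $A_n$ along $g$. Since $\phi_g^\circ$ is a p.m.p.\ automorphism of $X$, a point $y$ lies in $\phi_g^\circ A_n$ if and only if $g^{-1}y\in A_n$, i.e.\ $S_n(g^{-1}y)\in N$, whereas $y\in A_n$ if and only if $S_n(y)\in N$. On the set $C_n^{g^{-1}}$ we have $S_n(g^{-1}y)=g^{-1}S_n(y)\,g$, and because $N$ is normal in $G$, this conjugate lies in $N$ exactly when $S_n(y)$ does. Therefore the two conditions ``$y\in \phi_g^\circ A_n$'' and ``$y\in A_n$'' agree on $C_n^{g^{-1}}$, so $\phi_g^\circ A_n\bigtriangleup A_n\subseteq X\setminus C_n^{g^{-1}}$ and hence $\mu(\phi_g^\circ A_n\bigtriangleup A_n)\le \mu(X\setminus C_n^{g^{-1}})\to 0$, as desired.

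The only point demanding care is the bookkeeping of the conjugation direction: one must apply the commutation relation for $g^{-1}$ (not $g$) to rewrite $S_n(g^{-1}y)$, after which the normality of $N$ is exactly what allows the resulting conjugate to be pushed in and out of $N$. No deeper obstacle arises; notably, the triviality of the $M$-action and the freeness and ergodicity of $G/M\c(X,\mu)$ are part of the standing setup of the subsection but are not actually invoked in this argument, so the statement in fact holds for any p.m.p.\ action $G\c(X,\mu)$.
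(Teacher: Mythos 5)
Your proof is correct and follows essentially the same route as the paper's: test centrality against the constant sections $\phi_g$, use normality of $N$ to move the conjugate $g^{-1}S_n(y)g$ in and out of $N$, and reduce to group elements via Remark \ref{rem-ai-ac}. The only difference is cosmetic — the paper checks the one-sided inclusion $gA_n\setminus A_n\subset g(X\setminus C_n^g)$ and lets measure-preservation handle the symmetric difference, while you capture both sides at once by working on $C_n^{g^{-1}}$ — and your closing observation that the standing hypotheses on $M$ are never used is also accurate.
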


\begin{proof}
Pick $g\in G$.
If $n$ is large, then for every point $x\in X$ outside a set of small measure, we have $(\phi_g\circ S_n)x=(S_n\circ \phi_g)x$, that is, $g(S_nx)=(S_n(gx))g$.
Therefore if $x\in A_n$ further, then $S_n(gx)$ belongs to $gNg^{-1}=N$ and thus $gx\in A_n$.
\end{proof}

\begin{rem}
Lemma \ref{lem-cgm-ai} will be used in the proof of Lemma \ref{lem-ab1}, by letting $N$ be the centralizer $C_G(M)$ of $M$ in $G$.

Let $(S_n)_{n\in \N}$ be a central sequence in $[\calg]$ and set $A_n=\{ \, x\in X\mid S_nx\in C_G(M)\, \}$.
While $(A_n)$ is asymptotically invariant for $\calg$ by Lemma \ref{lem-cgm-ai}, we further have $\mu(A_n)\to 1$ if $M$ is finitely generated.
Indeed if $F$ is a finite generating set of $M$ and $n$ is large enough, then for all $x\in X$ outside a set of small measure, we have $(\phi_g\circ S_n)x=(S_n\circ \phi_g)x$ for all $g\in F$ and hence $g(S_nx)=(S_nx)g$ since $M$ acts on $X$ trivially.
Thus $S_nx$ commutes with every element of $M$.
\end{rem}

\begin{lem}\label{lem-ab1}
Let $(S_n)_{n\in \N}$ be a central sequence in $[\calg]$ and $p\geq 2$ an integer.
Let $h\in M$ and suppose that $h$ is central in $G$.
We define $A_n\subset X$ as the set of $p$-periodic points $x$ of $S_n^\circ$ such that $(S_n)^ix\in C_G(M)$ for all $i\in \{ 1,\ldots, p-1\}$ and $(S_n)^px=h$.
Suppose that $\mu(A_n)$ is uniformly positive.

Then there exists a central sequence $(R_n)$ in $[\calg]$ such that if we define $B_n\subset X$ as the set of $p$-periodic points $x$ of $R_n^\circ$ such that $(R_n)^ix\in C_G(M)$ for all $i\in \{ 1,\ldots, p-1\}$ and $(R_n)^px=h$, then $\mu(B_n)\to 1$.
\end{lem}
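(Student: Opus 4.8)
The plan is to first extract the structural content of the set $A_n$, then to manufacture $R_n$ by cloning the good $p$-cycles of $S_n$ throughout $(X,\mu)$ using ergodicity of $\calg$, and finally to verify centrality by arranging that the cloned cycles can be taken arbitrarily fine.

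\emph{Step 1: $A_n$ is a union of good $p$-cycles.} Writing $g_i=S_n((S_n^\circ)^i x)$ one has $(S_n)^k x=g_{k-1}\cdots g_0$, and for $y=S_n^\circ x$ the cyclic identity $(S_n)^k y=g_k\cdots g_1$. A direct computation then shows that $x\in A_n$ implies $S_n^\circ x\in A_n$: for $i<p$ we get $(S_n)^i y=((S_n)^{i+1}x)g_0^{-1}$, and $(S_n)^p y=g_0\,h\,g_0^{-1}=h$, which use only that $h$ is central (so $h\in C_G(M)$ and $g_0hg_0^{-1}=h$) and that $C_G(M)$ is a subgroup containing each $(S_n)^i x$. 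Thus $A_n$ is $S_n^\circ$-invariant, i.e.\ a union of full $p$-cycles on each of which $S_n$ realizes the prescribed pattern $(S_n)^i\in C_G(M)$, $(S_n)^p=h$. (That $(A_n)_n$ is moreover asymptotically invariant for $\calg$ would follow by intersecting the asymptotically invariant sequences supplied by Lemmas \ref{lem-sigma-ai} and \ref{lem-cgm-ai}, the latter applied to the central sequences $((S_n)^i)_n$ with $N=C_G(M)$.)

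\emph{Step 2: cloning the cycles.} Since $\calr$ is ergodic and $\mu(A_m)\ge\delta>0$ for every $m$, I would tile $(X,\mu)$ mod null, in finitely many stages for each fixed $n$ and leaving an uncovered remainder of measure $<1/n$, by pieces obtained as follows: pick a union $K\subset A_m$ of full $S_m$-cycles and a partial $\calr$-isomorphism $w$ carrying $K$ onto a tile, lift $w$ to a local section of $\calg$, and set $R_n:=w\circ S_m\circ w^{-1}$ on that tile. Each tile is a union of transported $p$-cycles, hence $R_n^\circ$-invariant, and conjugation preserves the good pattern: $(R_n)^p=w\,h\,w^{-1}=h$ because $h$ is central, while $(R_n)^i=w\,(S_m)^i\,w^{-1}\in C_G(M)$ because $(S_m)^i\in C_G(M)$ and $C_G(M)\trianglelefteq G$ (here normality of $C_G(M)$, which holds as $M\trianglelefteq G$, is essential). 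Thus every tile lies in $B_n$; setting $R_n=e$ on the remainder gives $\mu(B_n)\ge 1-1/n\to 1$.

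\emph{Step 3: centrality.} By Remark \ref{rem-ai-ac} it suffices that for each fixed Borel $A\subset X$ and each $g\in G$ one has $\mu(R_n^\circ A\bigtriangleup A)\to 0$ and $R_n$ asymptotically commutes with $\phi_g$. The decisive point, and the reason the hypothesis $\mu(A_m)\ge\delta$ for \emph{all} $m$ is used, is that the seed indices $m$ may be chosen as large as desired \emph{after} the finitely many transport maps $w$ for a given $n$ have been fixed. For a fixed $w$ and a fixed $A$ we have $\mu(w\,S_m^\circ\,w^{-1}A\bigtriangleup A)=\mu(S_m^\circ(w^{-1}A)\bigtriangleup(w^{-1}A))$, and since $w^{-1}A$ is a fixed set and $(S_m)$ is central this tends to $0$ as $m\to\infty$; likewise $w\,S_m\,w^{-1}$ asymptotically commutes with $\phi_g$ because $S_m$ asymptotically commutes with the fixed element $w^{-1}\phi_g w$ of $[\calg]$. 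Choosing the seeds large enough to control these quantities for the first $n$ members of fixed countable generating families of Borel sets and of $G$, and summing the finitely many tile contributions together with the remainder of measure $<1/n$, yields centrality by a diagonal argument.

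I expect the main obstacle to be exactly this last step: organizing the tiling so that simultaneously whole $p$-cycles are transported (forcing the partial isomorphisms to respect the cycle order, handled by Lusin--Novikov together with ergodicity) and the cross-tile bookkeeping for the commutator $[R_n,\phi_g]$ stays negligible. Both are manageable once the seeds are chosen last, but the tension between ``spreading $A_m$ across $X$'' and ``keeping $R_n^\circ$ weakly close to the identity''---which would be contradictory for fixed seeds, since $(A_n)$ is asymptotically invariant---is the delicate heart of the argument, and is resolved precisely by letting the fineness of $S_m^\circ$ outrun the coarseness of the fixed transport maps.
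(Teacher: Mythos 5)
There is a genuine gap, and it sits at the heart of Step 2. Your tiles are images of cycle-unions under local sections $w$ of $\calg$, and a local section is not a single group element: writing $x=(w^{-1})^\circ y$ for a point $y$ of a tile, the composition rule in $[\calg]$ gives
\[(w\circ S_m\circ w^{-1})^i(y)=w\bigl((S_m^\circ)^i x\bigr)\,(S_m)^i(x)\,w(x)^{-1},\]
so the two $w$-values are evaluated at \emph{different} points of the cycle. Since $C_G(M)$ is normal in $G$, this element lies in $C_G(M)$ if and only if $w((S_m^\circ)^i x)\,w(x)^{-1}\in C_G(M)$; your claim that $(R_n)^i=w(S_m)^iw^{-1}\in C_G(M)$ ``by normality'' silently treats $w$ as one group element, i.e., assumes the values of $w$ along each cycle agree modulo $C_G(M)$. (Only the $p$-th power survives conjugation for free: there both $w$-values are taken at the same point $x$, and $h$ is central.) Imposing this coset constraint on $w$ destroys exactly the freedom the tiling needs: $C_G(M)$ may be no larger than the center of $G$, and then such constrained transports cannot spread the cycles of $A_m$ over $X$, so ergodicity of $\calr$ plus Lusin--Novikov no longer produce the tiling. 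A second, related defect is the quantifier order in Step 3: the domain of each transport $w$ must be a union of full $S_m$-cycles inside $A_m$, so $w$ cannot be fixed \emph{before} the seed $m$; but your centrality estimate requires $w^{-1}A$ to be a fixed set as $m\to\infty$, and centrality of $(S_m)$ gives $\mu(S_m^\circ E\bigtriangleup E)\to 0$ only for each fixed $E$, never uniformly in $E$ (the supremum over $E$ stays bounded away from $0$ once $\mu(\{S_m^\circ\neq \mathrm{id}\})\to 1$). So the resolution you propose for the tension you yourself identify is circular.

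The paper's proof shows that no cloning is needed at all. After your Step 1 (which matches the paper: $S_n^\circ A_n=A_n$), one replaces $S_n$ by its restriction to $A_n$ (trivial off $A_n$); this is legitimate because $(A_n)$ is asymptotically invariant for $\calg$ by Lemmas \ref{lem-sigma-ai} and \ref{lem-cgm-ai}. The key input is then \cite[Lemma 5.1]{ktd}: in an ergodic groupoid, an asymptotically invariant sequence of uniformly positive measure has conull union (along a subsequence). Thus the sets $A_n$ spread over $X$ \emph{by themselves} --- the asymptotic invariance that your closing paragraph treats as the obstruction is precisely what replaces the transport maps. One then patches the restrictions $S_n|_{A_n}$ literally, setting $R=S_n$ on the disjointified pieces $Y_n=A_n\setminus\bigcup_{i=0}^{p-1}(S_n^\circ)^iC_n$ with $C_n=\bigcup_{k<n}A_k$, and $R=e$ elsewhere; since no conjugation occurs, the periodic pattern $(R)^i\in C_G(M)$, $(R)^p=h$ is inherited verbatim on each $Y_n$, and centrality is obtained by passing to a subsequence of $(S_n)$ with summable commutation and invariance errors. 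If you want to salvage your construction, you would have to restrict to transports that are constant (as group elements) on each cycle, and then prove a tiling lemma for translated cycles; that is a substantially harder statement than what ergodicity and Lusin--Novikov give, and it is exactly what the paper's route circumvents.
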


\begin{proof}
We follow the proof of \cite[Lemma 5.3]{ktd}, patching the restrictions $S_n|_{A_n}$ together to obtain a desired $R\in [\calg]$ after passing to an appropriate subsequence of $(S_n)$.

Note that the equation $S_n^\circ A_n=A_n$ holds.
Indeed let $x\in A_n$ and put $y=S_n^\circ x$.
Then $y$ is a $p$-periodic point of $S_n^\circ$.
The condition that $(S_n)^ix\in C_G(M)$ for all $i\in \{ 1,\ldots, p-1\}$ and $(S_n)^px=h\in C_G(M)$ implies that the value of $S_n$ at each point of the orbit of $x$ under iterations of $S_n^\circ$ belongs to $C_G(M)$.
Thus $(S_n)^iy\in C_G(M)$ for all $i\in \{ 1,\ldots, p-1\}$.
We also have $((S_n)^py)(S_nx)=(S_n)^{p+1}x=(S_nx)h=h(S_nx)$ and thus $(S_n)^py=h$.
Therefore $y\in A_n$ and $S_n^\circ A_n\subset A_n$.
The converse inclusion follows from this because $S_n^\circ$ is measure-preserving or we have $(S_n^\circ)^{-1}=(S_n^\circ)^{p-1}$ on $A_n$.

Since $A_n$ is asymptotically invariant for $\calg$ by Lemmas \ref{lem-sigma-ai} and \ref{lem-cgm-ai}, the sequence $(S_n')$ in $[\calg]$, defined by $S_n'=S_n$ on $A_n$ and $S_n'x=e$ for all $x\in X\setminus A_n$, is central in $[\calg]$.
After replacing $S_n$ with $S_n'$, we may assume that $S_nx=e$ for all $x\in X\setminus A_n$.
Then $(S_n^\circ)^p$ is the identity on $X$.
It suffices to show that for every $\ve >0$ and every finite subset $F \subset [\calg]$, there exists an $R\in [\calg]$ such that $\mu(\{ g\circ R\neq R\circ g\}) <\ve$ and $\mu(B)>1-\ve$, where for $u, v\in [\calg]$, we let $\{ u\circ v\neq v\circ u\}$ be the set of points of $X$ on which $u\circ v$ and $v\circ u$ are not equal, and we define $B\subset X$ as the set of $p$-periodic points of $R^\circ$ such that $R^ix\in C_G(M)$ for all $i\in \{ 1,\ldots, p-1\}$ and $R^px=h$.

Passing to a subsequence of $(S_n)$, we may assume that the following conditions hold:
\begin{enumerate}
\item \label{ga-js1} $\sum_n\mu(g^\circ A_n\bigtriangleup A_n)<\ve$ for all $g\in F$.
\item \label{gsn} $\sum_n\mu(\{ g\circ S_n\neq S_n\circ g \} )<\ve$ for all $g\in F$.
\item \label{snaki} $\sum_n \sum_{k<n}\sum_{i=1}^{p-1}\mu((S_n^\circ)^iA_k\bigtriangleup A_k)<\ve$.
\end{enumerate}
Inequality (\ref{ga-js1}) holds since the sequence $(A_n)$ is asymptotically invariant for $\calg$.
The other two inequalities hold since the sequence $(S_n)$ is central in $[\calg]$.
We set $C_n=\bigcup_{k<n}A_k$ and also set
\[Y_1=A_1,\  Y_n=A_n\setminus \bigcup_{i=0}^{p-1}(S_n^\circ)^iC_n \  \text{for}\  n\geq 2,\ \text{and} \ Y=\bigcup_{n=1}^\infty Y_n.\]
Note that the last union is disjoint.
For each $n$, we have $S_n^\circ Y_n=Y_n$ because $(S_n^\circ)^p$ is the identity on $X$ and $S_n^\circ A_n=A_n$.
Then $Y_n\subset A_n\setminus C_n$ and $\sum_n \sum_{i=1}^{p-1} \mu((S_n^\circ)^iC_n\bigtriangleup C_n)<\ve$ by inequality (\ref{snaki}).
Thus $\sum_n \mu((A_n\setminus C_n)\setminus Y_n)<\ve$ and $\mu(\bigcup_n(A_n\setminus C_n)\setminus Y)<\ve$.
By the definition of $C_n$, we have $\bigcup_n (A_n\setminus C_n)=\bigcup_nA_n$, and this is equal to $X$ by \cite[Lemma 5.1]{ktd}, where we use the assumption that $\mu(A_n)$ is uniformly positive.
Thus
\begin{enumerate}\setcounter{enumi}{3}
\item \label{muxyve} $\mu(X\setminus Y)<\ve$.
\end{enumerate}

We pick $g\in F$ and estimate $\sum_n\mu(g^\circ Y_n\bigtriangleup Y_n)$.
Pick $y\in Y_n\setminus g^\circ Y_n$.
Since $(g^\circ)^{-1}y\not \in Y_n$, either $(g^\circ)^{-1}y\not \in A_n$ or $(g^\circ)^{-1}y\in D_n$, where we set $D_n=\bigcup_{i=0}^{p-1}(S_n^\circ)^iC_n$.
In the former case, we have $y\in A_n\setminus g^\circ A_n$.
In the latter case, we have
\begin{align*}
y  \in (g^\circ D_n\setminus D_n)\cap Y_n \subset \bigcup_{i=0}^{p-1} \bigcup_{k<n}(g^\circ (S_n^\circ)^iA_k\setminus (S_n^\circ)^iA_k)\cap Y_n.
\end{align*}
Let $N$ be a positive integer.
We have
\begin{align*}
\sum_{n=1}^N\mu(Y_n\setminus g^\circ Y_n)\leq \sum_{n=1}^N\mu(A_n\setminus g^\circ A_n)+\sum_{i=0}^{p-1}\sum_{n=1}^N\sum_{k=1}^{n-1}\mu((g^\circ (S_n^\circ)^i A_k\setminus (S_n^\circ)^iA_k)\cap Y_n).
\end{align*}
By inequality (\ref{ga-js1}), in the right hand side, the first term is less than $\ve$.
In general, for all Borel subsets $A, A', B, B'\subset X$, we have
\[\mu(A\setminus B)\leq 2\mu(A\bigtriangleup A')+\mu(B\bigtriangleup B')+\mu(A'\setminus B')\]
(\cite[Lemma 5.2]{ktd}).
This implies that the second term is less than or equal to
\begin{align*}
& \sum_{i=0}^{p-1}\sum_{n=1}^N\sum_{k=1}^{n-1}(\mu((g^\circ A_k\setminus A_k)\cap Y_n)+3\mu((S_n^\circ)^iA_k\bigtriangleup A_k))\\
& <p\sum_{n=1}^N\sum_{k=1}^{n-1} \mu((g^\circ A_k\setminus A_k)\cap Y_n)+3\ve <(p+3)\ve,
\end{align*}
where the first inequality follows from inequality (\ref{snaki}) and the last inequality follows from inequality (\ref{ga-js1}).
Then $\sum_{n=1}^N\mu(Y_n\setminus g^\circ Y_n)<(p+4)\ve$ and therefore
\begin{enumerate}\setcounter{enumi}{4}
\item \label{8p2} $\sum_n \mu(Y_n\setminus g^\circ Y_n)< (p+4)\ve$ for all $g\in F$.
\end{enumerate}

We define a map $R\colon X\to G$, patching the restrictions $S_n|_{Y_n}$ together as follows:
For each $n$, we set $R=S_n$ on $Y_n$ and set $Rx=e$ if $x\in X\setminus Y$.
Since $S_n^\circ$ preserves $Y_n$, the map $R^\circ$ is an automorphism of $X$ and hence $R$ is an element of $[\calg]$.
Let $B\subset X$ be the set of $p$-periodic points of $R^\circ$ such that $R^ix\in C_G(M)$ for all $i\in \{ 1,\ldots, p-1\}$ and $R^px=h$.
Since $S_n^\circ$ preserves $Y_n$ again and $Y_n$ is a subset of $A_n$, each point of $Y_n$ belongs to $B$ and therefore $Y=B$ and $\mu(B)>1-\ve$ by inequality (\ref{muxyve}).

We pick $g\in F$ to estimate $\mu(\{ g\circ R\neq R\circ g\})$.
We have the following three inclusions:
\[
\{ g\circ R\neq R\circ g\} \subset \bigcup_n \, (\{ g\circ R\neq R\circ g\} \cap Y_n)\cup (X\setminus Y),\]
\[\{ g\circ R\neq R\circ g\} \cap Y_n \subset (\{ g\circ R\neq R\circ g\} \cap (Y_n\cap (g^\circ)^{-1}Y_n))\cup (Y_n\setminus (g^\circ)^{-1}Y_n)\text{, and}\]
\[\{ g\circ R\neq R\circ g\} \cap (Y_n\cap (g^\circ)^{-1}Y_n)\subset \{ g\circ S_n\neq S_n\circ g\}.\]
It follows from inequalities (\ref{gsn}), (\ref{8p2}) and (\ref{muxyve}) that
\begin{align*}
\mu(\{ g\circ R\neq R\circ g\})&\leq \sum_n(\mu(\{ g\circ S_n\neq S_n\circ g\})+\mu(Y_n\setminus (g^\circ)^{-1}Y_n))+\mu(X\setminus Y)\\
&<\ve +(p+4)\ve +\ve =(p+6)\ve.
\end{align*}
The desired estimate is obtained after scaling $\ve$.
\end{proof}

The following lemma is similar in appearance to the last lemma.
The difference between them is the assumption on $\mu(A_n)$ and the second condition in the definition of the set $B_n$.
The following lemma deduces a stronger conclusion from the conclusion of the last lemma.

\begin{lem}\label{lem-ab2}
Let $(S_n)_{n\in \N}$ be a central sequence in $[\calg]$ and $p\geq 2$ an integer.
Let $h\in M$ and suppose that $h$ is central in $G$.
We define $A_n\subset X$ as the set of $p$-periodic points $x$ of $S_n^\circ$ such that $(S_n)^ix\in C_G(M)$ for all $i\in \{ 1,\ldots, p-1\}$ and $(S_n)^px=h$.
Suppose that $\mu(A_n)\to 1$.

Then there exists a central sequence $(R_n)$ in $[\calg]$ such that if we define $B_n\subset X$ as the set of $p$-periodic points $x$ of $R_n^\circ$ such that $(R_n)^ix\in C_G(M)$ for all $i\in \{ 1,\ldots, p-1\}$ and $(R_n)^px=e$, then $\mu(B_n)\to 1$.
\end{lem}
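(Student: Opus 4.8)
The plan is to modify the sections $S_n$ orbit-by-orbit so as to cancel the central obstruction $h$, while keeping the induced automorphisms $S_n^\circ$ unchanged. First I would pass to a clean situation. Since $h\in Z(G)\cap M\subseteq Z(M)$, the constant section $\phi_h$ is central in $[\calg]$ and satisfies $\phi_h^\circ=\mathrm{id}$ (because $M$ acts trivially on $X$). By Lemmas \ref{lem-sigma-ai} and \ref{lem-cgm-ai} the sets $A_n$ are asymptotically invariant, so, exactly as in the proof of Lemma \ref{lem-ab1}, replacing $S_n$ by the section equal to $S_n$ on $A_n$ and to $e$ off $A_n$ preserves centrality; after this replacement $(S_n^\circ)^p$ is the identity on all of $X$, while $(S_n)^p=h$ holds on $A_n$ with $\mu(A_n)\to1$.

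The key local move is the following. On each $S_n^\circ$-orbit $\{x,S_n^\circ x,\dots,(S_n^\circ)^{p-1}x\}$, relative to a transversal $T_n$ of the period-$p$ relation $\calq_n$ generated by $S_n^\circ$, I would leave the values of $S_n$ unchanged except at the top level $(S_n^\circ)^{p-1}T_n$, where I multiply by $h^{-1}$, setting $R_n\bigl((S_n^\circ)^{p-1}x\bigr)=S_n\bigl((S_n^\circ)^{p-1}x\bigr)h^{-1}$. Since $h\in M$ acts trivially on $X$, this leaves $R_n^\circ=S_n^\circ$ intact, so the ``sets'' half of centrality, namely $\mu(R_n^\circ A\bigtriangleup A)\to0$ for every Borel $A$, is inherited for free from $(S_n)$. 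Because $h\in Z(M)$ centralizes $M$ and the partial products $(S_n)^i x$ lie in $C_G(M)$, the modified partial products $(R_n)^i x$ stay in $C_G(M)$, while the full product around each orbit becomes $e$ instead of $h$; hence every point of an orbit meeting $T_n$ exactly once lands in $B_n$, giving $\mu(B_n)\geq\mu(A_n)-o(1)$.

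It remains to secure the other half of centrality, asymptotic commutation with each $g\in G$. Using that $h$ is central and that $S_n$ asymptotically commutes with $\phi_g$, a direct computation yields the inclusion
\[\{\,x : g\cdot R_nx\neq R_n(gx)\cdot g\,\}\subseteq\{\,x : g\cdot S_nx\neq S_n(gx)\cdot g\,\}\cup\bigl(T_n\bigtriangleup g^{-1}T_n\bigr).\]
The first set on the right is negligible because $(S_n)$ is central, so everything reduces to choosing the transversals $T_n$ so that $\mu(gT_n\bigtriangleup T_n)\to0$ for every $g$; that is, to producing asymptotically invariant transversals for the relations $\calq_n$.

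I expect this last point to be the main obstacle. Equivalently, one must asymptotically trivialize the $\Z/p$-valued ``level-shift'' cocycle of the $G$-action on the quotients $Y_n=X/\calq_n$ recording how $g$ permutes the tower levels of $S_n^\circ$; for a single period-$p$ transformation this cocycle can be cohomologically nontrivial (and then no invariant transversal exists), which is precisely why the hypothesis is $\mu(A_n)\to1$ rather than mere positivity and why the conclusion is $\mu(B_n)\to1$ rather than an equality. The plan is to show that for a \emph{central sequence} of period-$p$ elements this cocycle is asymptotically a coboundary: the weak convergence $S_n^\circ\to\mathrm{id}$ together with approximate commutation with $G$ should force the level-shifts to vanish in measure along the sequence (as one checks concretely for the digit-flips of a $p$-adic odometer), so suitable transversals $T_n$ can be selected. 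Finally I would assemble the $R_n$ into a central sequence, if necessary through the subsequence-and-patching scheme of Lemma \ref{lem-ab1} applied to disjoint $S_n^\circ$-invariant pieces, summing the estimates above to obtain simultaneously asymptotic commutation and $\mu(B_n)\to1$.
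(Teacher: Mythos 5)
Your first two steps are fine (the cleanup on $A_n$ mirrors the beginning of the proof of Lemma \ref{lem-ab1}, and multiplying values of $S_n$ by the central element $h^{-1}$ indeed changes neither $S_n^\circ$ nor membership of partial products in $C_G(M)$), but the step you yourself flag as the main obstacle is a genuine, and in fact unrepairable, gap: it is not true that a period-$p$ central sequence admits asymptotically invariant transversals. A transversal $T_n$ of the relation generated by $S_n^\circ$ has measure tending to $1/p$ (the sets $(S_n^\circ)^iT_n$, $0\leq i\leq p-1$, are essentially a partition of $X$ into sets of equal measure, up to the error $\mu(X\setminus A_n)\to 0$), so an asymptotically invariant choice of transversals would be a non-trivial asymptotically invariant sequence of Borel sets for the $G$-action. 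The hypotheses of the lemma, however, are consistent with the free action $G/M\c (X,\mu)$ being strongly ergodic, in which case no such sequence exists. This is not an exotic case: it is exactly the case the lemma must cover when $G$ has property (T), since every ergodic p.m.p.\ action of a Kazhdan group is strongly ergodic, and the lemma is invoked (through Corollary \ref{cor-anbn} and Theorem \ref{thm-sch-per}) precisely to treat such groups, e.g.\ in Section \ref{sec-noncom}. Concretely, for a Kazhdan group whose center is $\bigoplus_\N \Z/p\Z$ (Appendix \ref{sec-app}), the construction of Subsection \ref{subsec-not-star} produces central sequences $(T_N)$ whose induced automorphisms have period exactly $p$ inside full groups of strongly ergodic relations; for these, no sequence of transversals can satisfy $\mu(gT_N\bigtriangleup T_N)\to 0$ for all $g$. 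So your cocycle-trivialization claim holds in profinite situations like the $p$-adic odometer you mention, but fails in general, and your modification scheme cannot be made central.

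The paper's proof avoids choosing transversals altogether: it sets $R_n=(S_m)^{-1}\circ S_n$ for $m\gg n$, cancelling the obstruction between two terms of the \emph{same} central sequence. Since $(S_n)^px=h$ and $(S_m)^px=h$ on sets of measure tending to $1$ (this is where the hypothesis $\mu(A_n)\to 1$, rather than mere uniform positivity, is really used), and since $S_m$ asymptotically commutes with the fixed element $S_n$, one gets $(R_n)^px=((S_m)^{-p}\circ (S_n)^p)x=h^{-1}h=e$ on a large set; and $R_n^\circ$ still has period exactly $p$ because, by \cite[Lemma 5.6]{ktd}, $\mu(\{\, x\in X\mid (S_m^\circ)^ix=(S_n^\circ)^ix\neq x\,\})\to 0$ as $m\to\infty$ for each $i\in \{1,\ldots,p-1\}$. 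Some cancellation idea of this kind, rather than a cohomological trivialization, is what the heart of the lemma requires.
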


\begin{proof}
We show that for all large $n\in \N$, if we choose a sufficiently large integer $m>n$ and set $R_n=(S_m)^{-1}\circ S_n$, then the obtained sequence $(R_n)$ works.
Let $\ve >0$ and fix a large $n\in \N$ such that $\mu(A_n)>1-\ve$.
If $m$ is large enough, then $\mu(A_m)>1-\ve$ and $\mu(C)>1-\ve$, where $C$ is the set of points $x\in X$ such that
\begin{itemize}
\item $(S_n\circ (S_m)^{-1})x=((S_m)^{-1}\circ S_n)x$, and
\item $((S_m)^{-i}\circ (S_n)^i)x=((S_m)^{-1}\circ S_n)^ix$ for all $i\in \{ 1,\ldots, p\}$.
\end{itemize}
By \cite[Lemma 5.6]{ktd}, for all $i\in \{ 1,\ldots, p-1\}$, we have
\[\mu(\{ \, x\in X\mid (S_m^\circ)^i x=(S_n^\circ )^ix\neq x\, \})\to 0\]
as $m\to \infty$.
Therefore for all $i\in \{ 1,\ldots, p-1\}$, since $(S_n^\circ )^ix\neq x$ for all $x\in A_n$, after replacing $m$ with a larger integer, we may assume that there exists a Borel subset $A_n'\subset A_n$ such that $\mu(A_n\setminus A_n')<\ve$ and $(S_m^\circ)^i x\neq (S_n^\circ )^ix$ for all $x\in A_n'$.
We set
\[D=C\cap A_n'\cap \bigcap_{i=0}^{p-1}(S_n^\circ)^{-i}A_m.\]
Then $\mu(D)>1-(3+p)\ve$.
We set $R=(S_m)^{-1}\circ S_n$ and define $B\subset X$ as the set of $p$-periodic points of $R^\circ$ such that $R^ix\in C_G(M)$ for all $i\in \{ 1,\ldots, p-1\}$ and $R^px=e$.
We claim that $D\subset B$.
This completes the proof of the lemma.
Pick $x\in D$.
We first show that $x$ is a $p$-periodic point of $R^\circ$ and $R^px=e$.
For each $i\in \{ 1,\ldots, p-1\}$, it follows from $x\in A_n'$ that $(S_m^\circ)^i x\neq (S_n^\circ )^ix$, and follows from $x\in C$ that
\[((S_m)^{-i}\circ (S_n)^i)^\circ x=(((S_m)^{-1}\circ S_n)^i)^\circ x=(R^i)^\circ x=(R^\circ)^ix.\]
Hence $(R^\circ)^i x\neq x$.
We also have
\[R^px=((S_m)^{-1}\circ S_n)^px=((S_m)^{-p}\circ (S_n)^p)x=((S_m)^{-p}x)h=e,\]
where the second equation follows from $x\in C$, the third equation follows from $x\in A_n$, and the last equation follows from $x\in A_m=(S_m^\circ)^pA_m$.
Finally for each $i\in \{ 1,\ldots, p-1\}$, we have
\[R^ix=((S_m)^{-1}\circ S_n)^ix=((S_m)^{-i}\circ (S_n)^i)x=(S_m)^{-i}((S_n^\circ)^ix)((S_n)^ix),\]
which belongs to $C_G(M)$ because $x\in A_n\cap (S_n^\circ)^{-i}A_m$ and the set $A_m$ is preserved by $S_m^\circ$, as shown in the second paragraph of the proof of Lemma \ref{lem-ab1}.
\end{proof}

Combining Lemmas \ref{lem-ab1} and \ref{lem-ab2}, we obtain the following corollary, which also reminds us of the notation fixed in the beginning of this subsection.

\begin{cor}\label{cor-anbn}
Let $G$ be a countable group and $M$ a normal subgroup of $G$.
Let $G/M\c (X, \mu)$ be a free ergodic p.m.p.\ action and let $G$ act on $(X, \mu)$ through the quotient map from $G$ onto $G/M$.
We set $(\calg, \mu) =G\ltimes (X, \mu)$.
Let $(S_n)$ be a central sequence in $[\calg]$ and $p\geq 2$ an integer.
Let $h\in M$ and suppose that $h$ is central in $G$.
We define $A_n\subset X$ as the set of $p$-periodic points $x$ of $S_n^\circ$ such that $(S_n)^ix\in C_G(M)$ for all $i\in \{ 1,\ldots, p-1\}$ and $(S_n)^px=h$.
Suppose that $\mu(A_n)$ is uniformly positive.

Then there exists a central sequence $(R_n)$ in $[\calg]$ such that if we define $B_n\subset X$ as the set of $p$-periodic points $x$ of $R_n^\circ$ such that $(R_n)^ix\in C_G(M)$ for all $i\in \{ 1,\ldots, p-1\}$ and $(R_n)^px=e$, then $\mu(B_n)\to 1$.
\end{cor}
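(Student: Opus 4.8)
The plan is to deduce the corollary by composing the two preceding lemmas, using the output of Lemma~\ref{lem-ab1} as the input of Lemma~\ref{lem-ab2}. The corollary hypothesizes only that $\mu(A_n)$ is uniformly positive, which is precisely the hypothesis of Lemma~\ref{lem-ab1}; and its conclusion, in which the final constraint on the new sequence is $(R_n)^p x = e$ rather than $(R_n)^p x = h$, matches the conclusion of Lemma~\ref{lem-ab2}. The two lemmas are thus arranged exactly so as to be concatenated: Lemma~\ref{lem-ab1} upgrades ``uniformly positive measure'' to ``measure tending to $1$'' while retaining the value $h$, and Lemma~\ref{lem-ab2} then replaces the value $h$ by $e$ at the cost of requiring that stronger measure hypothesis.

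First I would apply Lemma~\ref{lem-ab1} to the given central sequence $(S_n)$, the integer $p$, and the central element $h\in M$. Since $\mu(A_n)$ is uniformly positive by assumption, the lemma furnishes a central sequence, say $(S_n')$, in $[\calg]$ for which the set $A_n'$ of $p$-periodic points $x$ of $(S_n')^\circ$ satisfying $(S_n')^i x\in C_G(M)$ for all $i\in \{1,\ldots,p-1\}$ and $(S_n')^p x=h$ obeys $\mu(A_n')\to 1$.

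Next I would feed $(S_n')$ into Lemma~\ref{lem-ab2}. The key observation is that the set $A_n'$ just produced is verbatim the set that Lemma~\ref{lem-ab2} names $A_n$ for the sequence $(S_n')$, so the condition $\mu(A_n')\to 1$ is exactly the hypothesis $\mu(A_n)\to 1$ demanded there. Applying Lemma~\ref{lem-ab2} then yields a central sequence $(R_n)$ in $[\calg]$ such that the set $B_n$ of $p$-periodic points $x$ of $R_n^\circ$ with $(R_n)^i x\in C_G(M)$ for all $i\in \{1,\ldots,p-1\}$ and $(R_n)^p x=e$ satisfies $\mu(B_n)\to 1$, which is precisely the assertion of the corollary.

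Because the corollary is a formal concatenation of the two lemmas, I do not expect a substantial obstacle here; the only point requiring care is the bookkeeping, namely verifying that the defining conditions on the intermediate sets coincide exactly with the hypotheses of the second lemma (the centrality of $h$, the centralizer conditions on $(S_n')^i x$, and the periodicity structure), so that the two applications can be chained with no further argument---in particular no additional passage to a subsequence beyond those already carried out inside the lemmas themselves.
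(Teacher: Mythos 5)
Your proposal is correct and is exactly the paper's proof: the corollary is introduced there with the words ``Combining Lemmas \ref{lem-ab1} and \ref{lem-ab2}, we obtain the following corollary,'' i.e., Lemma \ref{lem-ab1} upgrades uniform positivity of $\mu(A_n)$ to $\mu(A_n)\to 1$ while keeping $(S_n)^p x = h$, and Lemma \ref{lem-ab2} then replaces $h$ by $e$. Your bookkeeping check that the output sets of the first lemma coincide verbatim with the hypothesis sets of the second is the only thing needed, and it holds.
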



\subsection{A variant construction}\label{subsec-var}

Continuing from Subsection \ref{subsec-sch-ape}, we present another sufficient condition for a countable group $G$ to admit a free p.m.p.\ Schmidt action, under the assumption that $G$ admits a p.m.p.\ Schmidt action.
In the following theorem, we assume the given p.m.p.\ action $G\c (X, \mu)$ to be ergodic, as opposed to Theorem \ref{thm-sch-ape}.
This is because the proof uses certain asymptotically invariant sequences of subsets, which are better controlled if the action is ergodic.

\begin{thm}\label{thm-sch-per}
Let $G$ be a countable group and $M$ a normal subgroup of $G$.
Let $G/M\c (X, \mu)$ be a free ergodic p.m.p.\ action and let $G$ act on $(X, \mu)$ through the quotient map from $G$ onto $G/M$.
We set $(\calg, \mu) =G\ltimes (X, \mu)$.

Let $(S_n)$ be a central sequence in $[\calg]$, let $p\geq 2$ be an integer, and
let $L<M$ be a finite subgroup which is central in $G$.
We define $A_n\subset X$ as the set of $p$-periodic points of $S_n^\circ$ such that $(S_n)^ix\in C_G(M)$ for all $i\in \{ 1,\ldots, p-1\}$ and $(S_n)^px\in L$.
Suppose that $\mu(A_n)$ is uniformly positive.
Then $G$ has the Schmidt property.
\end{thm}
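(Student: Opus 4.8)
The plan is to reduce, by means of the lemmas of Subsection~\ref{subsec-cent-per}, to a \emph{genuinely} periodic central sequence whose $p$-th power is trivial, and then to run the commuting-family-plus-co-induction argument from the proof of Theorem~\ref{thm-sch-ape}. The key observation making this possible is that although here the periods do not tend to infinity (they equal $p$), a sequence with $(R_n)^p=e$ still satisfies hypothesis~(1) of Lemma~\ref{lem-de} through its second alternative ``$S^{p}x=e$'', so the surgery of Lemmas~\ref{lem-de} and~\ref{lem-en-qn-ape} applies.

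First I would decompose $A_n=\bigsqcup_{h\in L}A_n^h$ according to the value $h=(S_n)^px\in L$. Since $L$ is finite and $\mu(A_n)$ is uniformly positive, the pigeonhole principle gives, after passing to a subsequence, a single $h\in L$ with $\mu(A_n^h)$ uniformly positive; as $L$ is central in $G$, this $h$ is central. Corollary~\ref{cor-anbn}, applied to the sets $A_n^h$, then produces a central sequence $(R_n)$ in $[\calg]$ for which the set $B_n$ of $p$-periodic points $x$ of $R_n^\circ$ with $(R_n)^ix\in C_G(M)$ for $1\le i\le p-1$ and $(R_n)^px=e$ satisfies $\mu(B_n)\to1$. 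Replacing $R_n$ by the element equal to $R_n$ on $B_n$ and to the identity off $B_n$ — which remains central because $B_n$ is asymptotically invariant by Lemmas~\ref{lem-sigma-ai} and~\ref{lem-cgm-ai} — I may assume $(R_n)^p=e$ everywhere, $R_nx\in C_G(M)$ for all $x$, and $\mu(\{\,x:R_n^\circ x\neq x\,\})=\mu(B_n)\to1$. Taking $\pi\colon X\to\{\ast\}$ constant, so that $M_{\pi(x)}=M$ and $C_G(M_{\pi(x)})=C_G(M)$, places us in the framework of Theorem~\ref{thm-sch-ape}.

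Next I would organize $(R_n)$ into a commuting family by following the proof of Lemma~\ref{lem-en-qn-ape} with the sole modification that, in the definitions of $D_1$ and $D_n^l$, the large-period requirement ``$p'>1/\ve$'' is replaced by the condition $(R_n)^{p'}x=e$, which holds on $B_n$. This is legitimate: Lemma~\ref{lem-de} only uses its hypothesis~(1), which is met on $B_n$ via the second alternative, and since $\mu(B_n)\to1$ while $\mu(R_n^\circ A\bigtriangleup A)\to0$ for every Borel $A$ (Remark~\ref{rem-ai-ac}), the relative-measure estimates $\mu(E_n^l\setminus D_n^l)<\ve_n\mu(E_n^l)$ are achieved by passing to sufficiently late terms. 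The membership $(R_n)^ix\in C_G(M)$ for $i<p$ guarantees that every orbit-value of $R_n$ lies in $C_G(M)$, so the surgery of Lemma~\ref{lem-de} preserves conclusion~(5). The outcome is a commuting central sequence, still denoted $(R_n)$, of periodic elements with $R_nx\in C_G(M)$, with $\mu(\{\,x:R_n^\circ x\neq x\,\})\to1$, and with the transversal structure of condition~(iv); in particular $\cale=\bigcup_n\langle R_1,\dots,R_n\rangle$ is principal and isomorphic to a subrelation $\calq$ of the orbit equivalence relation $\calr$ of $G/M\c(X,\mu)$.

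Finally I would conclude as in the last paragraphs of the proof of Theorem~\ref{thm-sch-ape}. Let $\calm=M\times X$ be the isotropy bundle of $\calg$; since $R_nx\in C_G(M)$, the groupoids $\calm$ and $\cale$ commute and $\calm\vee\cale\cong\calm\times_X\cale$. Choosing a free p.m.p.\ action $G\c(Y,\nu)$, I let $\alpha\colon\calm\vee\cale\to\aut(Y,\nu)$ be trivial on $\cale$ and the restriction of that action to $M$ on $\calm$, and co-induce $\calg\c(Z,\zeta)$. By Proposition~\ref{prop-co-induced} the lifts $(\tilde R_n)$ are central in $[(\calg,\mu)\ltimes(Z,\zeta)]$, because $\cale$ acts trivially and hence $\alpha(R_nx)$ is the identity for all $x$; moreover, since $\tilde R_n^\circ$ carries the fibre over $x$ to the fibre over $R_n^\circ x$, the estimate $\mu(\{\,x:R_n^\circ x\neq x\,\})\to1$ forces $\zeta(\{\,z:\tilde R_n^\circ z\neq z\,\})\to1$. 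The associated groupoid is the translation groupoid of a p.m.p.\ action $G\c(Z,\zeta)$, which is free because $M$ acts freely on $(Y,\nu)$, so $G\ltimes(Z,\zeta)$ is a free Schmidt action; Lemma~\ref{lem-erg-dec} then yields a free \emph{ergodic} Schmidt action and hence the Schmidt property for $G$. I expect the main obstacle to be the middle step: checking that the commuting-family construction of Lemma~\ref{lem-en-qn-ape}, devised for sequences with periods tending to infinity, survives unchanged for a sequence of bounded period $p$ once the periodicity is supplied through the $(R_n)^p=e$ alternative in Lemma~\ref{lem-de}.
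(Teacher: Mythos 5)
Your proposal is correct and follows essentially the same route as the paper: the paper's proof likewise pigeonholes over the finite central subgroup $L$ to invoke Corollary \ref{cor-anbn}, then (in Lemma \ref{lem-en-qn-per}) reruns the inductive construction of Lemma \ref{lem-en-qn-ape} with $D_1$ and $D_n^l$ cut down to the sets $B_n$ where $(S_n)^p = e$, so that hypothesis (1) of Lemma \ref{lem-de} is met through its second alternative, and finally concludes verbatim as in Theorem \ref{thm-sch-ape} with $\Omega$ a singleton and $\calm\times_X\cale$ reducing to $M\times\cale$. Your extra step of truncating $R_n$ to the identity off $B_n$ is harmless but unnecessary, since the paper achieves the same effect by building the $B_n$-membership into the definition of the sets $D$ fed to Lemma \ref{lem-de}.
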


The scheme of the proof of this theorem is the same as that for Theorem \ref{thm-sch-ape}.
Lemma \ref{lem-de} will be used in the following lemma, which is analogous to Lemma \ref{lem-en-qn-ape}:

\begin{lem}\label{lem-en-qn-per}
With the notation and the assumption in Theorem \ref{thm-sch-per}, let $\calr$ be the orbit equivalence relation associated with the action $G/M \c (X, \mu)$.
Then there exist a central sequence $(T_n)_{n\in \N}$ in $[\calg]$ and a sequence $(E_{n+1})_{n\in \N}$ of Borel subsets of $X$ satisfying conditions (i), (iii) and (iv) in Lemma \ref{lem-en-qn-ape} together with the following condition:
\begin{enumerate}
\item[$\textrm{(ii)}'$] For each $n$ and each $x\in X$, we have $T_nx\in C_G(M)$.
\end{enumerate}
\end{lem}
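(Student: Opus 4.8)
The plan is to reduce, by means of Corollary \ref{cor-anbn}, to a central sequence all of whose $p$-periodic points return to the identity, and then to run the inductive construction from the proof of Lemma \ref{lem-en-qn-ape} essentially verbatim, reading $\pi$ as the constant map onto a one-point space. With this interpretation $M_{\pi(x)}=M$ and $C_G(M_{\pi(x)})=C_G(M)$ for every $x$, the fiber condition becomes vacuous, and condition (ii) of Lemma \ref{lem-en-qn-ape} becomes exactly condition $\textrm{(ii)}'$ of the present statement.

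First I would exploit the finiteness of $L$. Writing $A_n=\bigsqcup_{h\in L}A_n^h$ with $A_n^h=\{\,x\in A_n\mid (S_n)^px=h\,\}$ and using that $\mu(A_n)$ is uniformly positive, a pigeonhole argument produces, after passing to a subsequence, a single $h\in L$ with $\mu(A_n^h)$ uniformly positive. Since $L$ is central in $G$, the element $h$ is central in $G$, so Corollary \ref{cor-anbn} applies to $(S_n)$ and yields a central sequence, which after renaming I again denote by $(S_n)$, such that the set $B_n$ of $p$-periodic points $x$ of $S_n^\circ$ with $(S_n)^ix\in C_G(M)$ for $i\in\{1,\dots,p-1\}$ and $(S_n)^px=e$ satisfies $\mu(B_n)\to 1$. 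With this sequence in hand I would carry out the induction of Lemma \ref{lem-en-qn-ape} unchanged, except that in the definitions of the sets $D_1$ and $D_n^l$ the period threshold ``if $x$ is a $p'$-periodic point of $S_n^\circ$ then $p'>1/\ve_n$'' is replaced by ``$p'>1/\ve_n$ or $(S_n)^{p'}x=e$'', which is precisely hypothesis (1) of Lemma \ref{lem-de}. On $B_m$ every point is $p$-periodic with $(S_m)^px=e$ and $(S_m)^ix\in C_G(M)$, and $S_m^\circ x\neq x$ because $p\geq 2$; hence this hypothesis is met on $B_m$ through the alternative $(S_m)^{p'}x=e$ rather than through a large period. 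Because $(S_n)$ is central, for every Borel set $A$ we have $\mu(S_m^\circ A\bigtriangleup A)\to 0$ and $\mu(\{S_m\circ \phi\neq \phi\circ S_m\})\to 0$ for each $\phi\in[\calg]$ (Remark \ref{rem-ai-ac}); together with $\mu(X\setminus B_m)\to 0$ this lets me, after replacing $S_n$ by $S_m$ for a large $m$ at each stage, arrange $\mu(E_n^l\setminus D_n^l)<\ve_n\mu(E_n^l)$ exactly as before. Applying Lemma \ref{lem-de} then produces periodic $T_n\in[\calg]$ with $T_nx\in C_G(M)$, giving condition $\textrm{(ii)}'$; the Borel transversals $E_{n+1}$ and their partitions are built from fundamental domains of the maps $T_n^\circ$ as in Lemma \ref{lem-en-qn-ape}, yielding conditions (iii) and (iv). Finally the estimate $\mu(\{\,x\mid T_nx\neq S_nx\,\})<7\ve_n$ holds as before, and since $T_n$ agrees with $S_m$ on most of $B_m$, where $S_m^\circ$ moves every point, condition (i) follows.

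The step I expect to require the most care is checking that hypothesis (1) of Lemma \ref{lem-de} is genuinely satisfied in this new regime. In Lemma \ref{lem-en-qn-ape} it was guaranteed by forcing short-period points to disappear via the assumption $\mu(A_n^{p'})\to 0$, whereas here short periods are unavoidable, and their harmlessness rests entirely on the identity $(S_n)^px=e$ supplied by Corollary \ref{cor-anbn}. Once this single point is secured, the remainder of the argument is a transcription of the proof of Lemma \ref{lem-en-qn-ape}, with $C_G(M)$ everywhere in place of $C_G(M_{\pi(x)})$ and with the fiber-preservation clauses simply dropped.
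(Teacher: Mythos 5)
Your proposal is correct and follows essentially the same route as the paper: the paper likewise invokes Corollary \ref{cor-anbn} (the finiteness of $L$ standing in for your explicit pigeonhole to a single central $h$) to replace $(S_n)$ by a sequence whose $p$-periodic points satisfy $(S_n)^px=e$, and then reruns the induction of Lemma \ref{lem-en-qn-ape} with $\Omega$ a singleton, taking $D_1=B_1$ and $D_n^l\subset B_n$ so that hypothesis (1) of Lemma \ref{lem-de} holds through the alternative $S^{p'}x=e$ exactly as you identify.
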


\begin{proof}
The desired sequence $(T_n, E_{n+1})_{n\in \N}$ is constructed by induction, similarly to the proof of Lemma \ref{lem-en-qn-ape}.
Fix a decreasing sequence $(\ve_n)_{n\in \N}$ of positive numbers converging to $0$.
We inductively construct a sequence $(T_n, E_{n+1})_{n\in \N}$ satisfying conditions $\textrm{(ii)}'$, (iii) and (iv) and satisfying the inequality $\mu(\{ \, x\in X\mid T_nx\neq S_nx\, \})<7\ve_n$ for all $n$.
Let $p$ be the integer in Theorem \ref{thm-sch-per}.
Since $L$ is finite, by Corollary \ref{cor-anbn}, we may assume without loss of generality that $\mu(B_n)\to 1$, where we define $B_n\subset X$ as the set of $p$-periodic points $x$ of $S_n^\circ$ such that $(S_n)^ix\in C_G(M)$ for all $i\in \{ 1,\ldots, p-1\}$ and $(S_n)^px=e$.

To construct $T_1$, we set $D_1=B_1$.
After replacing $S_1$ with $S_n$ for a large $n$, we may assume that $\mu(X\setminus D_1)<\ve_1$.
We apply Lemma \ref{lem-de} by letting $D=D_1$ and $E=X$ and letting $\Omega$ be a singleton.
Then we obtain a periodic $T_1\in [\calg]$ such that $T_1x\in C_G(M)$ for almost every $x\in X$ and $\mu(\{ \, x\in X\mid T_1x\neq S_1x\, \})<5\ve_1<7\ve_1$.
Since $T_1$ is periodic, we can find a Borel fundamental domain $E_2\subset X$ for the automorphism $T_1^\circ$ of $X$ and its Borel partition $E_2=\bigsqcup_{l\in \N}E_2^l$ such that $\calq_1E_2^l$ is equal to the set of $l$-periodic points of $T_1^\circ$, where $\calq_1$ is the subrelation of $\calr$ generated by $T_1^\circ$.
The first step of the induction completes.

Assuming that we have constructed $T_1,\ldots, T_{n-1}$ and $E_2,\ldots, E_n$, we construct $T_n$ and $E_{n+1}$.
Let $\calq_{n-1}$ be the subrelation of $\calr$ generated by $T_1^\circ,\ldots, T_{n-1}^\circ$.
By induction hypothesis, we have a Borel transversal $E_n\subset X$ of $\calq_{n-1}$ and its Borel partition $E_n=\bigsqcup_{l\in \N^{n-1}}E_n^l$.
We choose a finite subset $L_n\subset \N^{n-1}$ and set $F_n=\bigsqcup_{l\in L_n}E_n^l$ as in the proof of Lemma \ref{lem-en-qn-ape}.
After replacing $S_n$ with $S_m$ for a sufficiently large $m$, for each $l\in L_n$, we define $D_n^l$ as the set of points $x\in E_n^l\cap ((S_n^\circ)^{-1}E_n^l)\cap B_n$ such that $(S_n\circ T^k)x=(T^k\circ S_n)x$ for each $k=(k_1,\ldots, k_{n-1})\in \Phi_l$, where we set $T^k= (T_{n-1})^{k_{n-1}}\circ \cdots \circ (T_2)^{k_2}\circ (T_1)^{k_1}$ and define $\Phi_l$ as before.
Letting $D=D_n^l$ and $E=E_n^l$ and letting $\Omega$ be a singleton, we apply Lemma \ref{lem-de} for each $l\in L_n$ and obtain a periodic $T_n\in [\calg_{F_n}]$.
The rest of the construction of $T_n\in [\calg]$, whose domain is extended to $X$, and a Borel transversal $E_{n+1}$ of $\calq_n$ is a verbatim translation of that in the proof of Lemma \ref{lem-en-qn-ape}.
\end{proof}

\begin{proof}[Proof of Theorem \ref{thm-sch-per}]
The proof is a verbatim translation of that of Theorem \ref{thm-sch-ape}, where we apply Lemma \ref{lem-en-qn-per} in place of Lemma \ref{lem-en-qn-ape} and let $\Omega$ be a singleton.
We note that the groupoid $\calm \times_X\cale$ in that proof then reduces to the direct product $M\times \cale$.
\end{proof}

We now prove Theorems \ref{thm-gm} and \ref{thm-cs-infinity} stated in Section \ref{sec-intro}.

\begin{cor}\label{cor-gm}
Let $G$ be a countable group and $M$ a finite central subgroup of $G$.
Let $G/M\c (X, \mu)$ be a free ergodic p.m.p.\ action and let $G$ act on $(X, \mu)$ through the quotient map from $G$ onto $G/M$.
If the action $G\c (X, \mu)$ is Schmidt, then $G$ has the Schmidt property.
\end{cor}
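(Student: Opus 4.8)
The plan is to deduce the corollary from the two sufficient conditions already established, namely Theorem \ref{thm-sch-per} and Theorem \ref{thm-sch-ape}, by running a dichotomy on the periodic points of the given central sequence. First I would record two structural consequences of $M$ being a finite central subgroup. Since $M$ is central we have $C_G(M)=G$, so every condition of the form ``$\,\cdot\in C_G(M)$'' is automatically satisfied. Moreover, because $M$ acts trivially on $(X,\mu)$ while the quotient $G/M$ acts freely, the stabilizer in $G$ of $\mu$-almost every point $x\in X$ is the full preimage of the trivial subgroup of $G/M$, that is, exactly $M$. In particular, whenever $T\in[\calg]$ and $x$ is a $p$-periodic point of $T^\circ$, the return value $T^p x$ lies in the stabilizer $M$ of $x$.

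Let $(S_n)$ be a central sequence in $[\calg]$ witnessing that $\calg=G\ltimes(X,\mu)$ is Schmidt, so that $\mu(\{\,x\mid S_n^\circ x\neq x\,\})\to 1$. For $p\in\N$ write $A_n^p$ for the set of $p$-periodic points of $S_n^\circ$. The non-triviality hypothesis is precisely $\mu(A_n^1)\to 0$, so the relevant behaviour is that of $A_n^p$ for $p\geq 2$, and the proof splits into two exhaustive cases.

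In the first case, suppose $\limsup_n\mu(A_n^p)>0$ for some $p\geq 2$. Passing to a subsequence of $(S_n)$, which remains central, I may assume $\mu(A_n^p)$ is uniformly positive. I would then apply Theorem \ref{thm-sch-per} with the finite central subgroup $L:=M$: since $C_G(M)=G$ the conditions $(S_n)^i x\in C_G(M)$ are vacuous, and since every $p$-periodic point $x$ satisfies $(S_n)^p x\in M=L$ by the stabilizer computation above, the set denoted $A_n$ in that theorem coincides with $A_n^p$. Thus its hypotheses hold and $G$ has the Schmidt property.

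In the remaining case, $\mu(A_n^p)\to 0$ for every $p\geq 2$, and together with $\mu(A_n^1)\to 0$ this yields $\mu(A_n^p)\to 0$ for all $p\in\N$. Here I would invoke Theorem \ref{thm-sch-ape} with $\Omega$ a singleton and $\pi$ the constant map, so that $M_\omega=M$ for the unique $\omega$ and $C_G(M_{\pi(x)})=G$; the assumption that the stabilizer of $x$ depends only on $\pi(x)$ holds because the stabilizer is the constant $M$. The fiber-preservation requirement on $S_n^\circ$ is then automatic, the condition $\mu(\{\,x\mid S_n^\circ x\neq x,\ S_n x\in C_G(M_{\pi(x)})\,\})\to 1$ reduces to the Schmidt non-triviality, and the vanishing of each $\mu(A_n^p)$ is exactly the periodic-point hypothesis of that theorem. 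Hence $G$ again has the Schmidt property. I do not expect a genuinely hard step here: the substance lives in Theorems \ref{thm-sch-ape} and \ref{thm-sch-per}, and the only real point is to verify that the two constructions fit the present hypotheses (with $L=M$ and with $\Omega$ trivial, respectively). The crux is the dichotomy itself, which routes each possible asymptotic behaviour of the central sequence to the construction tailored for it.
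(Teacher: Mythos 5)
Your proposal is correct and follows essentially the same route as the paper: the same dichotomy on the measures $\mu(A_n^p)$ of periodic-point sets, applying Theorem \ref{thm-sch-per} with $L=M$ in the uniformly positive case and Theorem \ref{thm-sch-ape} with $\Omega$ a singleton in the vanishing case. Your explicit verification that $(S_n)^p x\in M$ for $p$-periodic points (via the stabilizer being exactly $M$) is a detail the paper leaves implicit, but the argument is identical in substance.
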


\begin{proof}
By assumption, we have a central sequence $(S_n)$ in $[G\ltimes (X, \mu)]$ such that $\mu(\{ \, x\in X\mid S_n^\circ x\neq x\, \})\to 1$,
We will apply Theorem \ref{thm-sch-ape} or \ref{thm-sch-per}.
The most remarkable difference between the assumptions in those two theorems is the condition on the set $A_n^p$ of $p$-periodic points of $S_n^\circ$ and its measure.
Passing to a subsequence of $(S_n)$, we may assume that either $\mu(A_n^p)\to 0$ for every integer $p\geq 2$, or there is some integer $p\geq 2$ for which the values $\mu(A_n^p)$ are uniformly positive.
If the former holds, then we apply Theorem \ref{thm-sch-ape} by letting $\Omega$ be a singleton.
We note that $C_G(M)=G$ since $M$ is central in $G$.
If the latter holds, then we apply Theorem \ref{thm-sch-per} by letting $L=M$.
Thus the corollary follows from the theorems.
\end{proof}

Recall that a sequence $(g_n)$ in a countable group $G$ is called \textit{central} if for each $h\in G$, $g_n$ commutes with $h$ for all sufficiently large $n$.
The following is an immediate application of Corollary \ref{cor-gm}:

\begin{cor}\label{cor-cs}
If a countable group $G$ admits a central sequence diverging to infinity, then $G$ has the Schmidt property.
\end{cor}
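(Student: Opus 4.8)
The plan is to split into two cases according to whether the center $Z(G)$ is finite or infinite, reducing the finite case to Corollary \ref{cor-gm} via a conjugation action and treating the infinite case by a direct compact-group construction. First suppose $Z(G)$ is finite and set $M=Z(G)$. I would let $G$ act on the countable set $G\setminus\{e\}$ by conjugation and form the associated product action $G\c(X,\mu)$ on $X=\prod_{G\setminus\{e\}}[0,1]$ with the product measure. The kernel of the permutation action $G\to\mathrm{Sym}(G\setminus\{e\})$ is exactly $Z(G)=M$, and since a nontrivial permutation of a countably infinite set moves $\mu$-almost every point of the product space, the stabilizer of $\mu$-almost every point of $X$ equals $M$; equivalently $G/M\c(X,\mu)$ is (essentially) free, with $G$ acting through the quotient map.

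Next I would show the constant sections $\phi_{g_n}\in[G\ltimes X]$ form a nontrivial central sequence, using the criterion of Remark \ref{rem-ai-ac}. Since $(g_n)$ is central in $G$, for each $h\in G$ we have $\phi_{g_n}\phi_h=\phi_h\phi_{g_n}$ for all large $n$; and since each coordinate $k\in G\setminus\{e\}$ satisfies $g_nkg_n^{-1}=k$ for all large $n$, the induced permutations converge pointwise to the identity, so $\phi_{g_n}^\circ\to\mathrm{id}$ weakly and $\mu(\phi_{g_n}^\circ A\bigtriangleup A)\to0$ for every Borel $A$. Nontriviality holds because $g_n\to\infty$ and $Z(G)$ is finite force $g_n\notin Z(G)$ for large $n$, so the permutation induced by $g_n$ is nontrivial and $\phi_{g_n}^\circ$ moves almost every point; hence $\mu(\{x:\phi_{g_n}^\circ x\neq x\})\to1$ and $G\ltimes(X,\mu)$ is Schmidt. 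Because $M$ acts trivially on $X$, the invariant sets of the groupoid $G\ltimes(X,\mu)$ coincide with the $G/M$-invariant sets, so its ergodic components are exactly those of $G/M\c(X,\mu)$; applying Lemma \ref{lem-erg-dec} to $(\phi_{g_n})$ I would pass to an ergodic component $(X_z,\mu_z)$ on which $G/M$ acts freely and ergodically while $G\ltimes(X_z,\mu_z)$ remains Schmidt. Corollary \ref{cor-gm}, applied with the finite central subgroup $M$, then gives the Schmidt property for $G$.

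It remains to treat the case that $Z(G)$ is infinite; here the given sequence is irrelevant, the point being that an infinite central subgroup can be made to act through a compact group. I would choose an infinite countable subgroup $A<Z(G)$ and an injective homomorphism $\iota\colon A\to K$ into a compact metrizable abelian group $K$ (for instance $K=\T^{\N}$, into which every countable abelian group embeds through its divisible hull). As $\iota(A)$ is an infinite subset of the compact metric space $K$, extracting a convergent sequence and passing to successive quotients produces $(a_n)$ in $A$ with $a_n\to\infty$ and $\iota(a_n)\to1_K$; then $A\c(K,m_K)$ by translation is free, and translation by $\iota(a_n)$ tends to the identity in $\aut(K,m_K)$ while moving every point. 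Co-inducing this action yields a free p.m.p.\ action $G\c(Z,\zeta)$, and since $A$ is central the fact recalled before Proposition \ref{prop-co-induced} shows the lifted sequence $(\phi_{a_n})$ is central in $[G\ltimes(Z,\zeta)]$; it is nontrivial because the $G$-action is free. Thus $G$ admits a free (possibly nonergodic) Schmidt action, and Lemma \ref{lem-erg-dec} gives the Schmidt property (this may also be read as Corollary \ref{cor-gm} with $M$ trivial).

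The hard part will be the tension, in the finite-center case, between the two requirements on $(\phi_{g_n})$: weak convergence $\phi_{g_n}^\circ\to\mathrm{id}$ demands that the action be ``compact-like'' along the sequence, whereas nontriviality demands that $\phi_{g_n}^\circ$ genuinely move almost every point. This is precisely why the conjugation action is used, under which the (eventually) central-behaving elements $g_n$ fix most coordinates, rather than a translation Bernoulli shift of $G$, which is mixing and would make $g_n\to\infty$ fail to converge weakly to the identity. The analogous delicate point in the infinite-center case is the simultaneous extraction of a sequence in $A$ diverging to infinity yet converging to $1_K$, which rests on the compactness of $K$.
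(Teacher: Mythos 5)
Your proposal is correct, and in the finite-center case it is essentially the paper's own argument: the same conjugation Bernoulli action $G\c \prod_{G\setminus\{e\}}[0,1]$, the same central sequence of constant sections justified via Remark \ref{rem-ai-ac}, and the same appeal to Corollary \ref{cor-gm}. There are two places where you do more than the paper, and both are worth noting. First, you insert the passage to an ergodic component via Lemma \ref{lem-erg-dec} before invoking Corollary \ref{cor-gm}; this is not cosmetic, since Corollary \ref{cor-gm} is stated for a free \emph{ergodic} action of $G/M$, and the conjugation Bernoulli action is typically non-ergodic in exactly the situations covered by this corollary (for instance $G=\bigoplus_{\N}S_3$ has trivial center and a central sequence diverging to infinity, but every conjugacy class is finite, so every orbit on the index set is finite and the product action has many nontrivial invariant sets). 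The paper applies Corollary \ref{cor-gm} without comment on this point, so your extra step, together with the observation that essential freeness of $G/M$ and the condition $\mu_z(\{\, x\mid g_nx\neq x\,\})\to 1$ persist on almost every ergodic component, genuinely closes that gap. Second, for the infinite-center case the paper simply cites \cite[Example 8.8]{ktd}, whereas you reconstruct that argument from ingredients inside the paper: embed an infinite central subgroup $A$ into a compact metrizable abelian group $K$, use compactness of $K$ to extract $(a_n)$ with $a_n\to\infty$ in $A$ and $\iota(a_n)\to 1_K$, co-induce the free translation action $A\c(K,m_K)$ to $G$, and quote the fact recalled before Proposition \ref{prop-co-induced} for centrality of the lifted sequence; freeness of the co-induced action (using that $m_K$ is atomless and $A$ is normal) and Lemma \ref{lem-erg-dec} then finish. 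Both routes are sound; yours is self-contained relative to the paper and slightly more careful.
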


\begin{proof}
Let $G$ act on the set $G\setminus \{ e\}$ by conjugation, which induces the p.m.p.\ action of $G$ on the product space $X\coloneqq \prod_{G\setminus \{ e\}}[0, 1]$ equipped with the product measure $\mu$ of the Lebesgue measure.
We may assume that $G$ has finite center because otherwise the Schmidt property of $G$ is shown in \cite[Example 8.8]{ktd}.
Let $C$ be the center of $G$.
Then $C$ acts on $X$ trivially and the induced action $G/C\c (X, \mu)$ is essentially free.
By assumption, we have a central sequence $(g_n)$ in $G$ diverging to infinity, and we may assume that none of $g_n$ belongs to $C$.
Then by Remark \ref{rem-ai-ac}, $(g_n)$ is a central sequence in the full group $[G\ltimes (X, \mu)]$ such that $\mu(\{ \, x\in X\mid g_nx\neq x\, \})=1$ for all $n$.
Thus Corollary \ref{cor-gm} is applied to $G$ and its finite center $C$.
\end{proof}

\begin{rem}\label{rem-cs-infinity}
Let $G$ be a countable group.
If $M$ is a finite central subgroup of $G$ and the quotient group $G/M$ admits a central sequence diverging to infinity, then $G$ also admits such a sequence and thus has the Schmidt property by Corollary \ref{cor-cs}.

To show this, choose a section $\sfs \colon G/M\to G$ of the quotient map.
Let $(g_n)$ be a central sequence in $G/M$ diverging to infinity.
For each $h\in G$, the commutator $[\sfs (g_n), h]$ belongs to $M$ if $n$ is large enough.
Since $M$ is finite, after passing to a subsequence, we may assume that for each $h\in G$, the element $[\sfs (g_n), h]$ is independent of $n$.
Then the sequence $(\sfs (g_n)\sfs (g_1)^{-1})$ is central in $G$ and diverges to infinity.
\end{rem}


\section{Groups with infinite AC-center}\label{sec-ac}

\subsection{Reduction to the proof for groups with infinite FC-center}\label{subsec-red}

We collect basic properties of groups with infinite AC-center.
For a subset $S$ of a group $G$, we denote by $C_G(S)$ the centralizer of $S$ in $G$ and denote by $\langle  S \rangle_G$ the normal closure of $S$ in $G$, i.e., the minimal normal subgroup of $G$ containing $S$.
If $S$ consists of elements $g_1,\ldots, g_n$, then $C_G(S)$ and $\langle S \rangle_G$ are also denoted by $C_G(g_1,\ldots, g_n)$ and $\langle g_1,\ldots, g_n\rangle_G$, respectively.

\begin{lem}\label{lem-ac-center}
Let $G$ be a countable group and denote by $R$ the AC-center of $G$, i.e., the set of elements $g\in G$ such that the quotient group $G/C_G(\langle g\rangle_G)$ is amenable.
Then
\begin{enumerate}
\item[(i)] the set $R$ is a normal subgroup of $G$.
\item[(ii)] For each finite subset $S\subset R$, the quotient group $G/C_G(\langle S\rangle_G)$ is amenable.
\item[(iii)] The group $R$ is amenable.
\item[(iv)] The group $R$ is generated by all normal subgroups $M$ of $G$ such that $G/C_G(M)$ is amenable.
Therefore $R$ is equal to the AC-center introduced in \cite[0.G]{td}.
\end{enumerate}
\end{lem}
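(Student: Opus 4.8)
The plan is to reduce all four parts to two elementary facts together with the standard closure properties of amenability. For $g\in G$ set $N_g=C_G(\langle g\rangle_G)$. Since $\langle g\rangle_G$ is generated by the conjugates of $g$, an element centralizes $\langle g\rangle_G$ if and only if it centralizes every $hgh^{-1}$, so $N_g=\bigcap_{h\in G}hC_G(g)h^{-1}$; this is the largest normal subgroup of $G$ contained in $C_G(g)$, and in particular $N_g$ is normal in $G$. Thus $g\in R$ exactly when $G/N_g$ is amenable. Throughout I would repeatedly use that centralizers reverse inclusions ($A\subseteq B$ implies $C_G(B)\subseteq C_G(A)$) and that amenability is preserved under subgroups, quotients, finite direct products, extensions, and directed unions.

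I would prove (ii) first and then derive (i) from it. For finite $S\subset R$, an element centralizes $\langle S\rangle_G$ if and only if it centralizes each $\langle g\rangle_G$ with $g\in S$, so $C_G(\langle S\rangle_G)=\bigcap_{g\in S}N_g$. The quotient $G/\bigcap_{g\in S}N_g$ embeds into the finite product $\prod_{g\in S}G/N_g$ of amenable groups, hence is amenable, which is (ii). For (i): $e\in R$ trivially, while $\langle g^{-1}\rangle_G=\langle g\rangle_G$ and $\langle kgk^{-1}\rangle_G=\langle g\rangle_G$ give closure under inverses and conjugation invariance. For products, given $g,g'\in R$, the inclusion $\langle gg'\rangle_G\subseteq\langle g,g'\rangle_G$ yields $C_G(\langle g,g'\rangle_G)\subseteq N_{gg'}$, so $G/N_{gg'}$ is a quotient of the group $G/C_G(\langle g,g'\rangle_G)$ shown amenable in (ii); hence $gg'\in R$. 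A conjugation-invariant subgroup is normal, completing (i).

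For (iii) it suffices, since amenability is detected on finitely generated subgroups, to show that each $H=\langle g_1,\dots,g_n\rangle\le R$ is amenable. Put $S=\{g_1,\dots,g_n\}$ and $N=C_G(\langle S\rangle_G)$, so $G/N$ is amenable by (ii). Restricting the quotient map $G\to G/N$ to $H$ identifies $H/(H\cap N)$ with a subgroup of $G/N$, hence amenable; and every element of $N$ centralizes $\langle S\rangle_G\supseteq H$, so $H\cap N$ lies in the center of $H$ and is abelian. Therefore $H$ is an (abelian)-by-amenable extension and is amenable. This observation that $H\cap N$ is central in $H$ is the one genuinely non-formal point, and I expect it to be the main obstacle of the lemma.

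Finally (iv) is a double inclusion. Write $R'$ for the subgroup generated by all normal $M\le G$ with $G/C_G(M)$ amenable. For such an $M$ and any $m\in M$ we have $\langle m\rangle_G\subseteq M$, whence $C_G(M)\subseteq N_m$ and $G/N_m$ is amenable, so $m\in R$; thus $M\subseteq R$ for every such $M$, and since $R$ is a subgroup by (i) we get $R'\subseteq R$. Conversely each $g\in R$ lies in the normal subgroup $\langle g\rangle_G$, which satisfies $C_G(\langle g\rangle_G)=N_g$ with $G/N_g$ amenable, so $\langle g\rangle_G$ is one of the generators of $R'$ and $g\in R'$; hence $R\subseteq R'$. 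As $R'$ is precisely the AC-center as defined in \cite{td}, this equality also identifies the two definitions.
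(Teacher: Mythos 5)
Your proposal is correct and follows essentially the same route as the paper: diagonal embeddings into finite products of amenable quotients for (i) and (ii), the observation that $\langle S\rangle\cap C_G(\langle S\rangle_G)$ is central in $\langle S\rangle$ (so finitely generated subgroups of $R$ are central-by-amenable) for (iii), and a double inclusion for (iv). The only cosmetic differences are that you prove (ii) before (i) and, in (iv), you deduce $R'\subseteq R$ from the fact that each generating normal subgroup $M$ lies in the subgroup $R$, whereas the paper instead shows the family of such $M$ is directed under joins so that their union is already a group; both variants are sound.
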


\begin{proof}
Although some assertions in the lemma are proved in \cite[Theorem 13]{td}, we give a proof for the reader's convenience.
For the ease of symbols, in this proof, let us write $\bar{C}(g)$ and $\bar{C}(S)$ for $C_G(\langle g\rangle_G)$ and $C_G(\langle S\rangle_G)$, respectively, given $g\in G$ and $S\subset G$.
By its definition the set $R$ contains the trivial element and is closed under inverse.
If $r, s\in R$, then $\bar{C}(r)\cap \bar{C}(s)<\bar{C}(rs)$. Thus $G/(\bar{C}(r)\cap \bar{C}(s))$ surjects onto $G/\bar{C}(rs)$ and injects into $G/\bar{C}(r)\times G/\bar{C}(s)$ diagonally.
The last group is amenable and thus $rs\in R$.
Hence $R$ is a subgroup of $G$, and by its definition $R$ is normal in $G$.
Assertion (i) follows.

If $S$ consists of finitely many elements $r_1,\ldots, r_n\in R$, then $G/\bar{C}(S)$ diagonally injects into the direct product $G/\bar{C}(r_1)\times \cdots \times G/\bar{C}(r_n)$, which is amenable.
Thus $G/\bar{C}(S)$ is amenable, and assertion (ii) follows.
Moreover the group $\langle S\rangle$ generated by $S$ admits the homomorphism into $G/\bar{C}(S)$ induced by the inclusion into $G$, whose kernel is $\langle S\rangle \cap \bar{C}(S)$ and thus abelian.
Hence $\langle S\rangle$ is amenable, and assertion (iii) follows.

Let $\mathcal{M}$ be the set of normal subgroups $M$ of $G$ such that $G/C_G(M)$ is amenable, and let $R_1$ be the group generated by all members of $\mathcal{M}$.
If $r\in R$, then $\langle r\rangle_G\in \mathcal{M}$ and thus $r\in R_1$.
To show the converse, we note that if $M_1, M_2\in \mathcal{M}$, then the group generated by $M_1$ and $M_2$ belongs to $\mathcal{M}$ since its centralizer in $G$ is equal to $C_G(M_1)\cap C_G(M_2)$, and the group $G/(C_G(M_1)\cap C_G(M_2))$ diagonally injects into $G/C_G(M_1)\times G/C_G(M_2)$, which is amenable.
Therefore $R_1$ is the union of members of $\calm$.
If $r\in R_1$, then $r$ is contained in some $M\in \mathcal{M}$, and since $C_G(M)<\bar{C}(r)$, we have $r\in R$.
Assertion (iv) follows.
\end{proof}

Let $G$ be a countable group.
Suppose that the AC-center of $G$, denoted by $R$, is infinite.
We first assume that there exists a finite subset $S\subset R$ such that the normal closure $M\coloneqq \langle S\rangle_G$ is infinite.
Setting $L\coloneqq C_G(M)$, we then have two commuting, normal subgroups $L$, $M$ of $G$ such that $M$ is amenable and the quotient group $G/(LM)$ is amenable.
If $L\cap M$ is finite, then the infinite group $M/(L\cap M)$ injects into the group $(LM)/L$ and hence the index of $L$ in $LM$ is infinite.
By \cite[Theorem 18 (H1)]{td}, we conclude that $G$ is stable and thus has the Schmidt property.
If $L\cap M$ is infinite, then $LM$ has the infinite central subgroup $L\cap M$.
Since $G/(LM)$ is amenable, the construction in the proof of \cite[Theorem 15]{td} yields an ergodic free p.m.p.\ action of $G$ which is Schmidt.

We next assume that for each finite subset $S\subset R$, the normal closure $\langle S\rangle_G$ is finite.
For each $r\in R$, the normal closure $\langle r\rangle_G$ is then finite.
The group $G$ acts on $\langle r\rangle_G$ by conjugation, and some finite index subgroup of $G$ acts on it trivially.
Hence the centralizer $C_G(r)$ is of finite index in $G$, that is, $r$ belongs to the FC-center of $G$.
The AC-center $R$ is thus contained in the FC-center of $G$, and they coincide after all.
Let us record the following structural alternative obtained at this point.

\begin{prop}\label{prop-alt}
Let $G$ be a countable group with infinite AC-center.
Then either
\begin{enumerate}
\item[(1)] there exist two commuting, normal subgroups $L$, $M$ of $G$ such that one of them is infinite and amenable and the quotient group $G/(LM)$ is amenable, or
\item[(2)] the AC-center and the FC-center of $G$ coincide, and for each finite subset of the FC-center of $G$, its normal closure in $G$ is finite.
\end{enumerate}
\end{prop}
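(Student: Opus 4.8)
The plan is to argue by a dichotomy on the AC-center $R$ of $G$ according to whether some finite subset of $R$ has infinite normal closure. Concretely, I would split into two cases: either there exists a finite subset $S \subset R$ with $\langle S\rangle_G$ infinite, or every finite subset of $R$ has finite normal closure. These two cases will produce alternatives (1) and (2) respectively, and nothing beyond Lemma \ref{lem-ac-center} should be needed.

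First, suppose some finite $S \subset R$ has $M \coloneqq \langle S\rangle_G$ infinite. I would set $L \coloneqq C_G(M)$ and check that the pair $(L, M)$ witnesses alternative (1). Both are normal: $M$ is a normal closure, and $L$ is the centralizer of a normal subgroup. They commute by the very definition of the centralizer. That $M$ is amenable follows because $R$ is normal in $G$, so every conjugate of an element of $S$ again lies in $R$; hence $M \subset R$, and $R$ is amenable by Lemma \ref{lem-ac-center}(iii). Finally, $G/(LM)$ is a quotient of $G/L = G/C_G(\langle S\rangle_G)$, which is amenable by Lemma \ref{lem-ac-center}(ii), so $G/(LM)$ is amenable as well. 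Thus $M$ is the infinite amenable member, and alternative (1) holds.

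Second, suppose that for every finite $S \subset R$ the normal closure $\langle S\rangle_G$ is finite. Then in particular, for each $r \in R$ the set $\langle r\rangle_G$ is finite, and the conjugation action of $G$ on this finite set yields a homomorphism into a finite symmetric group; its kernel has finite index in $G$ and centralizes $r$, so $C_G(r)$ has finite index, i.e.\ $r$ lies in the FC-center. Hence $R$ is contained in the FC-center, and since the FC-center is always contained in the AC-center $R$, the two coincide. The remaining assertion of alternative (2) is precisely the standing case hypothesis.

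The argument is essentially a consolidation of Lemma \ref{lem-ac-center}, so I do not expect a serious obstacle. The only points requiring any care are the amenability of $M = \langle S\rangle_G$ in the first case, which hinges on the observation that normality of $R$ forces $M \subset R$ so that Lemma \ref{lem-ac-center}(iii) applies, and, in the second case, the passage from finiteness of $\langle r\rangle_G$ to finite index of $C_G(r)$ via the conjugation action on a finite set. Neither is deep, and the dichotomy is exhaustive by construction, so the two alternatives are genuinely mutually exclusive and jointly exhaustive.
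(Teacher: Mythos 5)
Your proof is correct and follows essentially the same route as the paper: the same dichotomy on whether some finite subset of the AC-center $R$ has infinite normal closure, the same witnesses $M=\langle S\rangle_G$ and $L=C_G(M)$ for alternative (1) via Lemma \ref{lem-ac-center}, and the same conjugation-on-a-finite-normal-closure argument to place $R$ inside the FC-center for alternative (2). The only (inessential) overstatement is your closing claim that the two alternatives are mutually exclusive; the proposition asserts only that the two cases of the dichotomy are exhaustive, which your argument establishes.
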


As shown above, if there exists a finite subset $S\subset R$ such that the normal closure $\langle S\rangle_G$ is infinite, then case (1) occurs, and if there exists no such $S$, then case (2) occurs.
In case (1), it has already shown that $G$ has the Schmidt property.
Therefore for the proof of Theorem \ref{thm-fc}, it remains to show that $G$ has the Schmidt property if $G$ has infinite FC-center and every finite subset of the FC-center has finite normal closure in $G$.

Finally we point out the following permanence properties, which are concerned with the question in Remark \ref{rem-finite-center}, but are not necessary for the proof of Theorem \ref{thm-fc}.


\begin{prop}\label{prop-fc-ac}
Let $G$ be a countable group with a finite central subgroup $Z$.
Then
\begin{enumerate}
\item[(i)] the group $G$ has infinite FC-center if and only if $G/Z$ has infinite FC-center.
\item[(ii)] The group $G$ has infinite AC-center if and only if $G/Z$ has infinite AC-center.
\end{enumerate}
\end{prop}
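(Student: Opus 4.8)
The plan is to prove both parts by tracking how the FC-- and AC--centers transform under the quotient map $\pi\colon G\to Q\coloneqq G/Z$, exploiting throughout that $Z$ is \emph{finite} and \emph{central}. In each part I write $R$ for the relevant center of $G$ and $\bar R$ for that of $Q$, and aim to establish the two inclusions $\pi(R)\subseteq\bar R$ and $\pi^{-1}(\bar R)\subseteq R$; together these give $\pi(R)=\bar R$. The equivalence then follows formally, since $\ker(\pi|_R)=R\cap Z\subseteq Z$ is finite, so that $|R|=|R\cap Z|\cdot|\pi(R)|$ and hence $R$ is infinite precisely when $\pi(R)=\bar R$ is.

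For part (i), the inclusion $\pi(R)\subseteq\bar R$ is immediate: if $[G:C_G(g)]<\infty$ then $\pi(C_G(g))\subseteq C_Q(\pi(g))$ is of finite index in $Q$, so $\pi(g)\in\bar R$. For the reverse inclusion I would take $\bar g\in\bar R$ with an arbitrary lift $g\in G$ and set $H\coloneqq\pi^{-1}(C_Q(\bar g))$, which is of finite index in $G$. Since $\pi([x,g])=[\pi(x),\pi(g)]=e$ for $x\in H$, we have $[x,g]\in Z$ for all $x\in H$; because $Z$ is central, a short commutator computation shows that $x\mapsto[x,g]$ is a \emph{homomorphism} $H\to Z$ with kernel $C_G(g)$. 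Hence $[H:C_G(g)]\le|Z|<\infty$, so $[G:C_G(g)]<\infty$ and $g\in R$. This gives $\pi^{-1}(\bar R)\subseteq R$, and in particular $\bar R\subseteq\pi(R)$.

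Part (ii) runs in parallel, with amenability replacing finite index, using the description of the AC--center from Lemma \ref{lem-ac-center}: $g\in R$ iff $G/C_G(\langle g\rangle_G)$ is amenable. For $\pi(R)\subseteq\bar R$ I would use that normal closures are preserved by the surjection, $\langle\pi(g)\rangle_Q=\pi(\langle g\rangle_G)$, together with $\pi(C_G(M))\subseteq C_Q(\pi(M))$ for $M\coloneqq\langle g\rangle_G$, so that $Q/C_Q(\langle\pi(g)\rangle_Q)$ is a quotient of the amenable group $G/C_G(M)$ and hence amenable. The main obstacle, which I expect to require the most care, is the reverse inclusion. Given $\bar g\in\bar R$ with lift $g$ and $M=\langle g\rangle_G$, I would set $N\coloneqq\pi^{-1}(C_Q(\langle\bar g\rangle_Q))$, so that $G/N\cong Q/C_Q(\langle\bar g\rangle_Q)$ is amenable and $[x,m]\in Z$ for all $x\in N$, $m\in M$. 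As $Z$ is central, for fixed $x\in N$ the map $c_x\colon m\mapsto[x,m]$ is a homomorphism $M\to Z$, and in turn $x\mapsto c_x$ is a homomorphism $N\to\mathrm{Hom}(M,Z)$ whose kernel is $C_G(M)$. Thus $N/C_G(M)$ embeds into the abelian group $\mathrm{Hom}(M,Z)$ and is therefore amenable, regardless of how large $M$ is. Since $C_G(M)$ is normal in $G$ (being the centralizer of a normal subgroup), the short exact sequence $1\to N/C_G(M)\to G/C_G(M)\to G/N\to 1$ realizes $G/C_G(M)$ as amenable--by--amenable, hence amenable, so $g\in R$.

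Combining the inclusions yields $\pi(R)=\bar R$ in each part, and the finiteness of $Z$ converts this into the stated equivalences. The only genuinely delicate points are the two commutator computations verifying that the relevant maps are homomorphisms; both rely on $Z$ being central, so that conjugation by elements of $M$ (resp. the relativity of the commutator over the subgroup) acts trivially on commutators landing in $Z$. The rest is bookkeeping with finite index and the permanence of amenability under quotients and extensions.
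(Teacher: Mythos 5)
Your proof is correct and follows essentially the same route as the paper: both establish $\pi(R)=\bar R$ for each center and then use finiteness of $Z$, and your key step in (ii) --- the commutator pairing $N\to\mathrm{Hom}(M,Z)$ with kernel $C_G(M)$, followed by permanence of amenability under extensions and quotients --- is exactly the paper's argument, merely carried out upstairs in $G$ rather than via a section $\sfs\colon G/Z\to G$ of the quotient map. The only cosmetic difference is in (i), where the paper counts conjugacy classes (the finite-to-one surjection $A_G(g)\to A_{G/Z}(\pi(g))$) while you reuse the commutator homomorphism into $Z$ to bound the index of the centralizer; both arguments are immediate.
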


\begin{proof}
For each $g\in G$, let $A_G(g)$ denote the conjugacy class of $g$ in $G$.
We note that an element $g\in G$ belongs to the FC-center of $G$ if and only if the set $A_G(g)$ is finite.
We set $\Gamma =G/Z$ with $\pi \colon G\to \Gamma$ the quotient map.
Let $R^0$ be the FC-center of $G$ and $R_1^0$ the FC-center of $\Gamma$.
For each $g\in G$, the map $\pi$ is a surjection from $A_G(g)$ onto $A_\Gamma(\pi(g))$, and is finite-to-one since $Z$ is finite.
This implies that $\pi(R^0)=R_1^0$, and assertion (i) follows.

We prove assertion (ii).
Let $R$ be the AC-center of $G$ and $R_1$ the AC-center of $\Gamma$.
It suffices to show that $\pi(R)=R_1$.
For each $g\in G$, we have $\pi(C_G(\langle g\rangle_G))<C_\Gamma (\langle \pi(g)\rangle_\Gamma)$.
We thus have the surjection from $G/C_G(\langle g\rangle_G)$ onto $\Gamma /C_\Gamma (\langle \pi(g)\rangle_\Gamma)$.
Hence $\pi(R)<R_1$.

We fix $\gamma \in \Gamma$ and set $M=\langle \gamma \rangle_\Gamma$ and $L=C_\Gamma(M)$.
We choose a section $\sfs \colon \Gamma \to G$ of $\pi$.
Let $\mathrm{Hom}(M, Z)$ be the group of homomorphisms from $M$ into $Z$ such that the product of two elements $\tau_1, \tau_2\in \mathrm{Hom}(M, Z)$ is given by the homomorphism $m\mapsto \tau_1(m)\tau_2(m)$.
Since $L$ and $M$ commute, we obtain the homomorphism $\tau \colon L\to \mathrm{Hom}(M, Z)$ defined by $\tau_l(m)=[\sfs(l), \sfs(m)]$ for $l\in L$ and $m\in M$.
We set $L_1=\ker \tau$.
Then $L/L_1$ is abelian and hence amenable.
If $g\in G$ with $\pi(g)=\gamma$, then $L_1<\pi(C_G(\langle g\rangle_G))$ because for each $l\in L_1$, we have $\sfs(l)\in C_G(\sfs(M))=C_G(\langle g\rangle_G)$ and $l=\pi(\sfs(l))\in \pi(C_G(\langle g\rangle_G))$.

Suppose that $\gamma \in R_1$ and pick $g\in G$ with $\pi(g)=\gamma$.
We show that $g\in R$, which implies the inclusion $R_1<\pi(R)$.
We set $N=C_G(\langle g\rangle_G)$.
The group $G/N$ is isomorphic to $\Gamma /\pi(N)$ via $\pi$.
Since $L_1<\pi(N)$, we have the surjection from $\Gamma /L_1$ onto $\Gamma /\pi(N)$, which surjects onto $\Gamma /L$ because $\pi(N)<L$.
It follows from $\gamma \in R_1$ that $\Gamma /L$ is amenable.
Since $L/L_1$ is also amenable, so are $\Gamma /L_1$, $\Gamma /\pi(N)$ and $G/N$, and thus $g\in R$.
\end{proof}


\subsection{An outline of Sections \ref{sec-noncom} and \ref{sec-com}}\label{subsec-outline}

Let $G$ be a countable group with infinite FC-center $R$.
Suppose that every finite subset of $R$ has finite normal closure in $G$.
The proof of the Schmidt property of $G$ will be given throughout Sections \ref{sec-noncom} and \ref{sec-com}.
In this subsection, we outline the proof along with a preliminary lemma on structure of $R$.

In Section \ref{sec-noncom}, we show that $G$ has the Schmidt property under the assumption that the center of $R$ is finite.
If we set $N=\bigcap_{r\in R}C_G(r)$, then $N\cap R$ is the center of $R$.
Since $C_G(r)$ is of finite index in $G$ for all $r\in R$, the group $G/N$ is residually finite and thus admits a free profinite action.
Moreover $G/N$ has infinite FC-center because the FC-center of $G/N$ contains $(RN)/N$.
Following Popa-Vaes \cite[Theorem 6.4]{pv} and Deprez-Vaes \cite[Section 3]{dv}, we construct a free profinite Schmidt action $G/N \c (X, \mu)$ (after passing to some finite index subgroup of $G$).
We then apply Theorems \ref{thm-sch-ape} and \ref{thm-sch-per} to the translation groupoid $G\ltimes (X, \mu)$ and conclude that $G$ has the Schmidt property.
We remark that the proof in Section \ref{sec-noncom} does not use the condition that every finite subset of $R$ has finite normal closure in $G$.

In Section \ref{sec-com}, we assume that the center of $R$ is infinite.
We then have an infinite abelian subgroup $A<R$ normalized by $G$.
This subgroup $A$ will appropriately be chosen and is not necessarily the center of $R$.
Since each finite subset of $R$ has finite normal closure in $G$, there exists a strictly increasing sequence $A_1<A_2<\cdots $ of finite subgroups of $A$ such that each $A_n$ is normalized by $G$.
Let us draw our attention to the following condition:
\begin{enumerate}
\item[$(\star)$] For every $N\in \N$, we have $\lim_n|F_{n, N}|/|A_n|=1$, where $F_{n, N}$ is the set of elements of $A_n$ whose order is more than $N$.
\end{enumerate}
For example, if $A_n=\Z /2^n \Z$ and we embed $A_n$ into $A_{n+1}$ arbitrarily, then the sequence $A_1<A_2<\cdots$ fulfills this condition.
In Subsection \ref{subsec-star}, we assume condition $(\star)$ and show that $G$ has the Schmidt property.
In Subsection \ref{subsec-not-star}, we deal with the case where condition $(\star)$ is not fulfilled.
In this case, applying Lemma \ref{lem-not-star-bm} below, after replacing $(A_n)$, we may assume without loss of generality that for some prime number $p$, each $A_n$ is isomorphic to the direct sum of copies of $\Z /p\Z$.

\begin{lem}\label{lem-not-star-bm}
Let $G$ be a countable group and $A$ an infinite abelian normal subgroup of $G$ contained in the FC-center of $G$.
Suppose that each finite subset of $A$ has finite normal closure in $G$ and let $A_1<A_2<\cdots$ be a strictly increasing sequence of finite subgroups of $A$ such that each $A_n$ is normalized by $G$.
Suppose further that for this sequence, condition $(\star)$ does not hold.
Then there exist a prime number $p$ and a strictly increasing sequence $B_1<B_2<\cdots$ of finite subgroups of $A$ such that each $B_n$ is normalized by $G$ and isomorphic to the direct sum of copies of $\Z /p\Z$.
\end{lem}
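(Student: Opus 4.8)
The natural candidate for the subgroups $B_n$ is the $p$-torsion $A_n[p]=\{\,a\in A_n\mid pa=0\,\}$ for a suitable prime $p$. Indeed, writing $A$ additively, $A_n[p]$ is the kernel of multiplication by $p$ on $A_n$, hence a characteristic subgroup of $A_n$; since $A_n$ is normalized by $G$ and $G$ acts on $A_n$ by automorphisms (conjugation), each $A_n[p]$ is again normalized by $G$. Moreover every nonzero element of $A_n[p]$ has order $p$, so $A_n[p]$ is a finite elementary abelian $p$-group, i.e.\ isomorphic to a direct sum of copies of $\Z/p\Z$. Thus the entire lemma reduces to producing a single prime $p$ for which $|A_n[p]|$ is unbounded along some subsequence: once this is known, since $A_m\subseteq A_{m'}$ implies $A_m[p]\subseteq A_{m'}[p]$, one passes to a subsequence $(n_k)$ along which $|A_{n_k}[p]|$ is strictly increasing and sets $B_k=A_{n_k}[p]$.

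To find such a prime, I would first unwind the failure of $(\star)$. Negating $(\star)$ produces an $N\in\N$, a constant $\delta>0$, and a subsequence along which the proportion of elements of $A_n$ of order at most $N$ is at least $\delta$. Every such element has order dividing $M:=\mathrm{lcm}(1,\dots,N)$, so it lies in $A_n[M]=\{\,a\in A_n\mid Ma=0\,\}$; hence $|A_n[M]|\ge\delta\,|A_n|$ along this subsequence, and in particular $|A_n[M]|\to\infty$ because the $A_n$ strictly increase.

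The crux is then a counting inequality bounding $|A_n[M]|$ in terms of the $p$-ranks. For any finite abelian group $H$ of exponent dividing $M=\prod_{p\mid M}p^{e_p}$ (with $e_p$ the $p$-adic valuation of $M$) one has
\[|H|\ \le\ \prod_{p\mid M}|H[p]|^{\,e_p},\]
which follows by decomposing $H$ into its primary components and observing that a finite abelian $p$-group of exponent at most $p^{e_p}$ and $p$-rank $r$ has order at most $p^{e_p r}=|H[p]|^{e_p}$. Applying this with $H=A_n[M]$, and noting $A_n[M][p]=A_n[p]$ for $p\mid M$, gives $\prod_{p\mid M}|A_n[p]|^{e_p}\ge|A_n[M]|\ge\delta\,|A_n|\to\infty$. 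As there are only finitely many primes dividing $M$ and the exponents $e_p$ are fixed, at least one factor must be unbounded along the subsequence; fixing such a prime $p$ and passing to a further subsequence along which $|A_n[p]|\to\infty$ completes the reduction of the first paragraph.

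The step I expect to require the most care is the counting inequality, whose real content is that a positive proportion of elements of bounded order forces large $p$-rank for some prime $p$ --- \emph{not} merely a large $p$-primary part, which could be accounted for by a single cyclic factor of large order (for which $A_n[p]$ stays bounded). Once the inequality $|A_n[M]|\le\prod_{p\mid M}|A_n[p]|^{e_p}$ is in hand, the remaining pigeonhole argument and the extraction of a strictly increasing chain are routine.
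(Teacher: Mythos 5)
Your proof is correct, and its core is the same pigeonhole argument as the paper's: negate $(\star)$ to get a uniformly positive proportion of elements of order at most $N$, then show that for one of the finitely many relevant primes $p$ the $p$-rank of $A_n$ (not merely the $p$-primary part) must blow up along a subsequence. The paper runs this pigeonhole slightly differently: it decomposes $A_n=\bigoplus_p A_n^p$ into primary components, lets $C_n^p$ be the subgroup of elements of $A_n^p$ of order at most $N$, and argues by contradiction that some $|C_n^p|\to\infty$ (since every element of order $\le N$ is a sum of elements of the $C_n^p$ with $p\le N$); bounded exponent then forces many order-$p$ elements. Your explicit inequality $|H|\le\prod_{p\mid M}|H[p]|^{e_p}$ packages exactly this content quantitatively, and both arguments are equally rigorous.

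The one genuine divergence is the endgame. The paper constructs the $B_n$ inductively as normal closures in $G$ of finite sets of order-$p$ elements of $\bigcup_n A_n$, invoking the hypothesis that finite subsets of $A$ have finite normal closure to keep the $B_n$ finite. You instead take $B_k=A_{n_k}[p]$, observing that the $p$-torsion is characteristic in $A_{n_k}$ and hence automatically normalized by $G$ and automatically finite. This is cleaner: it produces the $B_n$ in one stroke and does not use the finite-normal-closure hypothesis at all in the final step (that hypothesis is only ambient in the lemma's setting). The paper's normal-closure construction offers a little more flexibility in which elements get swept into $B_{n+1}$, but for the purposes of this lemma your choice is simpler and entirely sufficient.
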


\begin{proof}
Since condition $(\star)$ does not hold, after passing to a subsequence of $(A_n)$, we may assume that there exists $N\in \N$ such that the ratio $|A_n\setminus F_n|/|A_n|$ is uniformly positive, where $F_n$ denotes the set of elements of $A_n$ whose order is more than $N$.
Let $\calp$ be the set of prime numbers.
Then $A_n$ is isomorphic to the direct sum $\bigoplus_{p\in \calp}A_n^p$, where $A_n^p$ is the subgroup of elements of $A_n$ whose order is a power of $p$.
This direct sum decomposition is canonical and is thus preserved under $G$-conjugation.
We aim to show that for some $p\in \calp$, the number of elements of $A_n^p$ whose order is $p$ diverges to infinity after passing to a subsequence of $(A_n)$.

Let $C_n^p$ be the set of elements of $A_n^p$ whose order is less than or equal to $N$.
Then $C_n^p$ is a subgroup of $A_n^p$.
We claim that for some $p\in \calp$, after passing to a subsequence of $(A_n)$, we have $|C_n^p|\to \infty$ as $n\to \infty$.
Otherwise for each $p\in \calp$, the sequence $(|C_n^p|)_{n\in \N}$ would be bounded.
Therefore $|C_n^p|$ is uniformly bounded among all $n$ and all $p\in \calp$ with $p\leq N$.
This is absurd with the condition that $|A_n\setminus F_n|/|A_n|$ is uniformly positive and $|A_n|\to \infty$, because each element of $A_n$ whose order is less than or equal to $N$ is a sum of elements of $C_n^p$ with $p\leq N$.

Since $C_n^p$ is isomorphic to a direct sum of groups $\Z /p^k\Z$ for some positive integers $k$ with $p^k\leq N$, it follows from $|C_n^p|\to \infty$ that the number of elements of $C_n^p$ whose order is $p$ diverges to infinity.
This is the claim that we aim to show.
Note that elements of $A$ of order $p$ are preserved under $G$-conjugation.
Note also that each finite set of elements of $A$ of order $p$ generates a group whose elements other than the trivial one have order $p$, which is isomorphic to the direct sum of finitely many copies of $\Z /p\Z$.
Hence we obtain a desired sequence $B_1<B_2<\cdots$ of subgroups inductively as follows:
Choose an element of $\bigcup_nA_n$ of order $p$ and let $B_1$ be its normal closure in $G$.
Having defined $B_n$, choose an element $a$ of $\bigcup_nA_n$ of order $p$ which does not belong to $B_n$ and let $B_{n+1}$ be the normal closure of $B_n\cup \{ a\}$ in $G$.
\end{proof}


\subsection{Examples}\label{subsec-ex}

We present examples of groups with infinite FC-center such that their Schmidt property does not follow from known results in \cite{pv} and \cite{ktd} immediately.
Let us recall those results:
\begin{enumerate}
\item[(1)] If a countable group $G$ has infinite FC-center and is residually finite, then $G$ has the Schmidt property (\cite[Theorem 6.4]{pv}, see also \cite[Example 8.10]{ktd}).
\item[(2)] Suppose that a countable group $\Gamma$ acts on a countably infinite amenable group $A$ by automorphisms and suppose further that each $\Gamma$-orbit in $A$ is finite.
Then the semi-direct product $\Gamma \ltimes A$ is stable (\cite[Example 8.11]{ktd}) and therefore has the Schmidt property.
\end{enumerate}
Here we recall that a free ergodic p.m.p.\ action of a countable group is called \textit{stable} if the associated orbit equivalence relation absorbs the ergodic p.m.p.\ hyperfinite equivalence relation on an atomless standard probability space, under direct product.
If a countable group $G$ admits a free ergodic p.m.p.\ action which is stable, then $G$ is called \textit{stable}.

\begin{ex}\label{ex-ershov}
Let $\Gamma$ be the group of Ershov \cite{ershov}.
This is a countable, residually finite group with property (T) whose FC-center $R$ is not virtually abelian (note that these conditions imply $R\neq \Gamma$.
Otherwise $R=\Gamma$ would be amenable by Lemma \ref{lem-ac-center} (iii) and hence finite by property (T) of $\Gamma$, but this is absurd with $R$ being not virtually abelian).
Let $H$ be a countable, non-residually-finite group and define $G$ as the amalgamated free product $G=\Gamma \ast_R(H\times R)$, where $R$ is identified with the subgroup $\{ e\} \times R$ of $H\times R$.
Then the FC-center of $G$ is equal to $R$, which is proved in the next paragraph, and $G$ is not residually finite.
Moreover $G$ is not stable as shown in Corollary \ref{cor-not-stable} below.

We prove that the FC-center of $G$ is equal to $R$.
Pick $r\in R$.
We naturally identify $H$ with the subgroup $H\times \{ e\}$ of $H\times R$.
Let $p\colon G\to \Gamma$ be the surjection onto the first factor.
Then $\ker p=\langle H\rangle_G$.
Since $R$ is a normal subgroup of $G$, it follows from $H<C_G(R)$ that $\ker p <C_G(R)<C_G(r)$.
On the other hand, since $p$ is the identity on $\Gamma$, $G$ is identified with the semi-direct product $\Gamma \ltimes \ker p$.
Then $C_G(r)$ is identified with $C_\Gamma(r)\ltimes \ker p$, which is of finite index in $\Gamma \ltimes \ker p$.
Thus $r$ belongs to the FC-center of $G$.
We have shown that $R$ is contained in the FC-center of $G$.
The converse inclusion holds because the quotient group $G/R$ is isomorphic to the free product $(\Gamma /R)\ast H$ whose FC-center is trivial.
\end{ex}

\begin{ex}\label{ex-prufer}
We set $\Gamma =\mathit{SL}_m(\Z)$ with $m\geq 2$.
The group $\Z[1/2]/\Z$ is identified with the increasing union $\bigcup_n \Z /2^n\Z$, where the element $1\in \Z/2^n\Z$ is identified with the element $1/2^n+\Z \in \Z[1/2]/\Z$.
We set $A_n=(\Z /2^n\Z)^m$ and $A=(\Z[1/2]/\Z)^m=\bigcup_n A_n$.
The group $\Gamma$ acts on each $A_n$ by automorphisms, and the increasing sequence $A_1<A_2<\cdots$ fulfills condition $(\star)$ in Subsection \ref{subsec-outline}.

The semi-direct product $\Gamma \ltimes A$ is not residually finite.
In fact, the group $\Z[1/2]/\Z$ has no finite index subgroup other than itself, which is proved as follows:
Let $B$ be a finite index subgroup of $\Z[1/2]/\Z$ and pick $r\in \Z[1/2]$.
Find $m\in \N$ with $2^mr\in \Z$.
Since $B$ is of finite index, there exist $k, l\in \N$ such that $2^{-k}r-2^{-l}r+\Z \in B$ and $k-l>m$.
Then the element $2^{m+l}(2^{-k}r-2^{-l}r)+\Z =2^{m+l-k}r+\Z$ belongs to $B$ and so does $r+\Z$.
Thus we have $B=\Z[1/2]/\Z$.

Let $E$ be a countable group with property (T) containing $A$ as a central subgroup.
We define $G$ as the amalgamated free product $G= (\Gamma \ltimes A)\ast_AE$.
Then the FC-center of $G$ is equal to $A$, and $G$ is not stable (Corollary \ref{cor-not-stable}).

We obtain such a group $E$ as follows, relying on the construction of Cornulier \cite{cor} (see Appendix \ref{sec-app} for construction of analogous groups):
Let $H$ be the subgroup of $\mathit{SL}_5(\Z[1/2])$ consisting of matrices of the form
\begin{equation}\label{matrix}
\begin{pmatrix}
1 & \ast & \ast \\
0 & h & \ast \\
0 & 0 & 1
\end{pmatrix},
\end{equation}
where $h$ runs through elements of $\mathit{SL}_3(\Z[1/2])$.
Then $H$ has property (T) (\cite[Proposition 2.7]{cor}).
The center $C$ of $H$ consists of matrices such that each diagonal entry is $1$ and the $(1, 5)$-entry is the only off-diagonal entry that is possibly non-zero.
Let $Z$ be the subgroup of $C$ consisting of matrices whose $(1, 5)$-entry belongs to $\Z$.
Then the group $E\coloneqq (H/Z)^m$ is a desired one.
Indeed $(C/Z)^m$ is a central subgroup of $E$ isomorphic to $A$, and $E$ has property (T) since $H$ has property (T).
\end{ex}

\begin{ex}\label{ex-caln}
Let $p$ be a prime number and set $A=\bigoplus_\N \Z /p\Z$.
For $n\in \N$, we define $A_n$ as the group of elements $(a_i)_{i\in \N}\in A$ such that $a_i=0$ if $i>n$.
Every non-trivial element of $A$ has order $p$.
Thus the increasing sequence $A_1<A_2<\cdots$ does not fulfill condition $(\star)$ in Subsection \ref{subsec-outline}.
Let $\caln$ be the group of matrices $(a_{ij})_{i, j\in \N}$ with coefficient in $\Z /p\Z$ such that $a_{ii}=1$ for all $i\in \N$ and $a_{ij}=0$ for all $i>j$.
The group $\caln$ acts on the vector space $A$ by linear automorphisms, preserving the subspace $A_n$.
We equip $\caln$ with the topology of pointwise convergence as automorphisms of $A$.
Then $\caln$ is a compact group.

Let $\Gamma$ be a countable dense subgroup of $\caln$.
In the paragraph after next, we will prove that the FC-center of the semi-direct product $\Gamma \ltimes A$ is equal to $A$.
As in Example \ref{ex-prufer}, let $E$ be a countable group with property (T) containing $A$ as a central subgroup, and define $G$ as the amalgamated free product $G= (\Gamma \ltimes A)\ast_AE$.
Then the FC-center of $G$ is equal to $A$, and $G$ is not stable (Corollary \ref{cor-not-stable}).

We find such a group $E$, relying on the construction of Cornulier \cite{cor} again:
Let $\mathbb{F}_p$ be the field of order $p$ and let $\mathbb{F}_p[t]$ be the ring of polynomials over $\mathbb{F}_p$ in one indeterminate $t$.
We define $E$ as the subgroup of $\mathit{SL}_5(\mathbb{F}_p[t])$ consisting of matrices of the form (\ref{matrix}) with $h$ running through elements of $\mathit{SL}_3(\mathbb{F}_p[t])$.
Then $E$ has property (T) by \cite[Lemma 2.2]{cor}.
The center of $E$ is isomorphic to $\mathbb{F}_p[t]$ and to $A$.

Let $R$ be the FC-center of $\Gamma \ltimes A$.
We prove that $R$ is equal to $A$.
For each $n$, the group of elements of $\Gamma$ acting on $A_n$ trivially is of finite index in $\Gamma$.
Thus $A_n< R$ and $A< R$.
For the converse inclusion, it suffices to show that if an element $g\in \Gamma$ centralizes a finite index subgroup of $\Gamma$, then $g$ is trivial.
Suppose otherwise toward a contradiction.
Write $g=(g_{ij})_{i, j\in \N}$ as a matrix and pick positive integers $k<l$ such that $g_{kl}\neq 0$ and $g_{kj}=0$ if $1<j<l$.
Since $\Gamma$ is dense in $\caln$ and $g$ commutes with some finite index subgroup of $\Gamma$, there exists an open neighborhood $V$ of the identity in $\caln$ such that $g$ commutes with each element of $V$.
Then there exists an $m\in \N$ such that if a matrix $h=(h_{ij})_{i, j}\in \caln$ satisfies $h_{ij}=0$ for all $1\leq i<j<m$, then $h$ belongs to $V$.
We may assume that $m>l$.
Let $h\in V$ be the matrix such that the $(l, m)$-entry is $1$ and the other off-diagonal entries are $0$.
Then the $(k, m)$-entries of $gh$ and $hg$ are $g_{kl}+g_{km}$ and $g_{km}$, respectively.
We thus have $gh\neq hg$, a contradiction.
\end{ex}

We present a sufficient condition for a countable group not to be stable, and apply it to the groups in the above examples.
We say that a mean on a countable group $G$ is \textit{diffuse} if its value on each finite subset of $G$ is zero.

\begin{prop}\label{prop-not-stable}
Let $G$ be a countable group and $A$ a subgroup of $G$.
Suppose that each diffuse, $G$-conjugation invariant mean on $G$ is supported on $A$ and that the pair $(G, A)$ has property (T).
Then $G$ is not stable.
\end{prop}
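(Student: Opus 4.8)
The plan is to assume that $G$ is stable and derive a contradiction with the relative property (T) of the pair $(G,A)$. Fix a free ergodic p.m.p.\ action $G\c (X,\mu)$ whose orbit equivalence relation $\calr$ is stable. By the Jones--Schmidt characterization of stability (\cite{js}), there is a \emph{stability sequence}: an asymptotically invariant sequence $(B_n)$ of Borel subsets of $X$ with $\mu(B_n)=1/2$, together with a central sequence $(u_n)$ in $[\calr]$ satisfying $u_nB_n=X\setminus B_n$. Since the action is free, each $u_n$ is given by a Borel cocycle $\gamma_n\colon X\to G$ via $u_nx=\gamma_n(x)x$, and the asymptotic centrality of $(u_n)$ with the constant sections $\phi_g$ translates into asymptotic conjugation-equivariance: for each $g\in G$ one has $\gamma_n(gx)=g\gamma_n(x)g^{-1}$ for $\mu$-almost every $x$ outside a set whose measure tends to $0$. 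Consequently the push-forward probability measures $\nu_n\coloneqq (\gamma_n)_*\mu$ on $G$ are asymptotically conjugation invariant, and any weak-$*$ limit $m$ of $(\nu_n)$ along a non-principal ultrafilter is a $G$-conjugation invariant mean on $G$.

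The first key step is to show that $m$ is \emph{diffuse}. For $g\in G$ put $C_n^g=\{x:\gamma_n(x)=g\}$, so that $u_n$ agrees with $\phi_g$ on $C_n^g$. Using the flip together with the asymptotic invariance of $(B_n)$, I would estimate $\mu(C_n^g)\to 0$ as follows: the set $u_n(C_n^g\cap B_n)=g(C_n^g\cap B_n)$ is contained in $gB_n$, which is asymptotically contained in $B_n$, while it is also contained in $u_nB_n=X\setminus B_n$; since $u_n$ and $g$ preserve $\mu$ this forces $\mu(C_n^g\cap B_n)\le \mu(gB_n\setminus B_n)\to 0$, and symmetrically $\mu(C_n^g\cap(X\setminus B_n))\to 0$. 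Hence $\mu(C_n^g)\to 0$, so $m(\{g\})=0$ for every $g\in G$ and $m$ is diffuse. By the hypothesis of the proposition, $m$ is then supported on $A$, i.e.\ $\nu_n(A)=\mu(\{x:\gamma_n(x)\in A\})\to 1$ along the ultrafilter; passing to a subsequence I write $X_n=\{x:\gamma_n(x)\in A\}$, so that $\mu(X_n)\to 1$.

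Now I would invoke relative property (T). Consider the Koopman representation $\pi$ of $G$ on $L^2_0(X,\mu)$ and the vectors $\eta_n=\sqrt2\,(\mathbf 1_{B_n}-1/2)$, which satisfy $\|\eta_n\|_2=1/\sqrt2$, $\|\eta_n\|_\infty=1/\sqrt2$, and $\|\pi(g)\eta_n-\eta_n\|\to 0$ for each $g\in G$ because $(B_n)$ is asymptotically invariant. Since $(G,A)$ has relative property (T), its standard quantitative form yields $A$-fixed vectors $\zeta_n$ (the projections of $\eta_n$ onto the subspace of $A$-invariant vectors) with $\|\eta_n-\zeta_n\|\to 0$. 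The crucial point is that an $A$-fixed function in $L^2(X)$ is constant along $A$-orbits, hence $\zeta_n(u_n^{-1}x)=\zeta_n(x)$ for every $x$ with $u_n^{-1}x\in Ax$, that is, for $x\in u_nX_n$, a set of measure tending to $1$. Controlling the complementary small sets by the $L^2$-bound on $\zeta_n$ together with the uniform $L^\infty$-bound on $\eta_n$ (to absorb the failure of uniform integrability of $\zeta_n$), I obtain $\|U_{u_n}\zeta_n-\zeta_n\|\to 0$, where $U_{u_n}$ is the Koopman unitary of $u_n$ on $L^2(X)$. On the other hand the flip gives exactly $U_{u_n}\eta_n=-\eta_n$. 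Combining $U_{u_n}\eta_n=-\eta_n$, the identity $\|U_{u_n}\eta_n-U_{u_n}\zeta_n\|=\|\eta_n-\zeta_n\|\to 0$, the relation $\|U_{u_n}\zeta_n-\zeta_n\|\to 0$, and $\|\zeta_n-\eta_n\|\to 0$ forces $\|{-\eta_n}-\eta_n\|\to 0$, i.e.\ $\|\eta_n\|\to 0$, contradicting $\|\eta_n\|=1/\sqrt2$. Therefore $G$ is not stable.

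The main obstacle is the last step: the almost-invariance supplied by $(B_n)$ is only for the $G$-action, whereas the flip is implemented by the \emph{full-group} element $u_n$ rather than by $G$. The bridge between the two is precisely the diffuseness of $m$ (so that the cocycle $\gamma_n$ eventually takes values in $A$) combined with relative property (T) (so that the almost-invariant vectors become nearly $A$-fixed); the delicate and essential observation is that $A$-fixed $L^2$-functions are automatically invariant along $A$-orbits and are therefore left unmoved by the $A$-valued cocycle on $X_n$, which is what allows the flip to produce a genuine contradiction. In carrying this out I would take particular care over the error estimates on $X_n^c$ and on $u_nX_n^c$, and over the precise quantitative formulation of relative property (T) used to pass from pointwise $G$-almost-invariance of $(\eta_n)$ to $L^2$-proximity to the $A$-fixed subspace.
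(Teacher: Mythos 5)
Your proof is correct, and its skeleton coincides with the paper's: assume stability, extract a stability sequence (a central sequence $(u_n)$ in the full group together with asymptotically invariant sets $(B_n)$ of measure $1/2$ and the exact flip), use the diffuse-mean hypothesis to force the cocycle into $A$ on sets of measure tending to $1$, and then use relative property (T) to contradict the flip. The two supporting steps are implemented differently, though. First, where the paper merely asserts that the functions $g\mapsto\mu(\{\,x\in X\mid T_nx=g\,\})$ are asymptotically conjugation-invariant and then invokes the hypothesis, you explicitly verify that every weak-$*$ limit mean is diffuse, via the inclusion $u_n(C_n^g\cap B_n)\subset gB_n\setminus B_n$ and its mirror image; this verification is genuinely needed (the hypothesis concerns only diffuse means, and a general conjugation-invariant mean can have an atomic part not supported on $A$), and the paper leaves it implicit, relying on exactly the same flip argument you wrote out. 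Second, the paper applies relative property (T) in its spatial form: the asymptotically invariant sets $A_n$ are $\mu(A_n\bigtriangleup B_n)\to 0$ close to genuinely $A$-invariant Borel sets $B_n$, after which the contradiction is a one-line set inclusion, $T_n^\circ B_n\setminus B_n\subset T_n^\circ(X\setminus D_n)$, because an $A$-valued full-group element preserves every $A$-invariant set. You instead work in the Koopman representation: the almost-invariant vectors $\eta_n=\sqrt{2}(\mathbf{1}_{B_n}-1/2)$ are close to $A$-fixed vectors $\zeta_n$, which, being functions constant on $A$-orbits, are essentially fixed by $U_{u_n}$ on the set where the cocycle lies in $A$, contradicting $U_{u_n}\eta_n=-\eta_n$. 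These are two realizations of the same idea; the paper's set-level route avoids your $L^2$/$L^\infty$ bookkeeping, whereas your route is self-contained at the representation level -- and your $L^\infty$ bound on $\zeta_n$ does hold, since the projection onto the $A$-fixed subspace is the conditional expectation onto the $\sigma$-algebra of $A$-invariant sets and is therefore an $L^\infty$-contraction.
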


\begin{proof}
Suppose that $G$ admits a free ergodic p.m.p.\ action $G\c (X, \mu)$ which is stable.
Then we have a central sequence $(T_n)$ in the full group $[G\ltimes (X, \mu)]$ and an asymptotically invariant sequence $(A_n)$ for $G\ltimes (X, \mu)$ such that $T_n^\circ A_n\cap A_n=\emptyset$ (and hence $\mu(A_n)=1/2$) for all $n$ (see Remark \ref{rem-stab} below).
Property (T) of the pair $(G, A)$ implies that there exists an $A$-invariant Borel subset $B_n\subset X$ such that $\mu(A_n\bigtriangleup B_n)\to 0$.
Since the functions on $G$ defined by $g\mapsto \mu(\{ \, x\in X\mid T_nx=g\, \})$ are asymptotically $G$-conjugation invariant, the assumption on $G$-conjugation invariant means on $G$ implies that there exists a Borel subset $D_n\subset X$ such that $T_nx\in A$ for all $x\in D_n$ and $\mu(D_n)\to 1$.
Then
\[T_n^\circ B_n\setminus B_n\subset (T_n^\circ(D_n\cap B_n)\setminus B_n)\cup T_n^\circ(X\setminus D_n)=T_n^\circ (X\setminus D_n),\]
where the last equation holds since $B_n$ is $A$-invariant and $T_nx\in A$ for all $x\in D_n$.
Thus $\mu(T_n^\circ B_n\bigtriangleup B_n)\leq 2\mu(X\setminus D_n)\to 0$ and $\mu(T_n^\circ A_n\bigtriangleup A_n)\to 0$, a contradiction.
\end{proof}

\begin{rem}\label{rem-stab}
Let the group $\bigoplus_\N \Z /2\Z$ act on the compact group $X_0=\prod_\N \Z /2\Z$ by translation, equip $X_0$ with the Haar measure, and let $\calr_0$ denote the associated orbit equivalence relation.
For each $n\in \N$, let $\bar{T}_n\in [\calr_0]$ be the element of $\bigoplus_\N \Z /2\Z$ such that its coordinate indexed by $n$ is $1$ and the other coordinates are $0$, and let $\bar{A}_n\subset X_0$ be the subset consisting of points whose coordinate indexed by $n$ is $0$.
Then $(\bar{T}_n)$ is central in $[\calr_0]$, $(\bar{A}_n)$ is asymptotically invariant for $\calr_0$, and $\bar{T}_n \bar{A}_n\cap \bar{A}_n=\emptyset$ for all $n$.

If a discrete p.m.p.\ equivalence relation $\calr$ is stable, then we obtain similar sequences as follows:
By stability, we have a decomposition $\calr =\calr_0\times \calr_1$, where $\calr_1$ is some discrete p.m.p.\ equivalence relation on a standard probability space $(X_1, \mu_1)$.
Define $T_n\in [\calr]$ by $T_n(x, y)=(\bar{T}_n(x), y)$ for $x\in X_0$ and $y\in X_1$, and set $A_n=\bar{A}_n\times X_1$.
Then $(T_n)$ is central in $[\calr]$, $(A_n)$ is asymptotically invariant for $\calr$, and $T_n A_n\cap A_n=\emptyset$ for all $n$.
\end{rem}

\begin{cor}\label{cor-not-stable}
None of the groups $G$ in Examples \ref{ex-ershov}--\ref{ex-caln} is stable.
\end{cor}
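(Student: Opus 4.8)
The plan is to derive all three non-stability statements from Proposition \ref{prop-not-stable}, applied with the subgroup there taken to be the FC-center of $G$: this is $R$ in Example \ref{ex-ershov} and $A$ in Examples \ref{ex-prufer} and \ref{ex-caln}. Write $C$ for this subgroup. In each case $G$ is an amalgamated free product $G=G_1\ast_CG_2$ with $C$ normal in $G$ (it is characteristic in the factor $\Gamma$ and a direct factor of $H\times R$ in the first example, and normal in $\Gamma\ltimes A$ and central in $E$ in the other two, so it is normalized by a generating set of $G$). Hence $gCg^{-1}=C$ for all $g\in G$, and the quotient $Q\coloneqq G/C$ is the free product $(\Gamma/R)\ast H$ in the first case and $\Gamma\ast(E/A)$ in the other two. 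In every case $Q$ is a free product of two nontrivial groups one of which is infinite, so $Q$ is ICC and its FC-center is trivial. It then remains to check the two hypotheses of Proposition \ref{prop-not-stable}.

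For relative property (T) of the pair $(G,C)$, I would use that $C$ sits inside a subgroup of $G$ with property (T): the free factor $\Gamma$ in Example \ref{ex-ershov}, and the free factor $E$ in Examples \ref{ex-prufer} and \ref{ex-caln}. Any unitary representation of $G$ with almost invariant vectors restricts to a representation of that subgroup with almost invariant vectors, which by property (T) has a nonzero invariant vector; such a vector is in particular $C$-invariant. Thus $(G,C)$ has property (T).

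The main step is the claim that every diffuse, $G$-conjugation invariant mean $\mathfrak{m}$ on $G$ is supported on $C$. I would push $\mathfrak{m}$ forward along the quotient map $\pi\colon G\to Q$ to the mean $\bar{\mathfrak{m}}$ on $Q$ given by $\bar{\mathfrak{m}}(S)=\mathfrak{m}(\pi^{-1}(S))$; since $\pi^{-1}(qSq^{-1})=g\pi^{-1}(S)g^{-1}$ whenever $q=\pi(g)$, the mean $\bar{\mathfrak{m}}$ is conjugation invariant. Because $Q$ is ICC, every $q\neq e$ has infinite conjugacy class, so conjugation invariance forces $\bar{\mathfrak{m}}(\{q\})=0$ and hence $\bar{\mathfrak{m}}$ vanishes on every finite subset of $Q\setminus\{e\}$. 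If we had $\bar{\mathfrak{m}}(\{e\})=c<1$, then $S\mapsto(1-c)^{-1}\bar{\mathfrak{m}}(S\setminus\{e\})$ would be a diffuse, conjugation-invariant mean on $Q$, contradicting that a countable group carries such a mean precisely when its FC-center is infinite. Therefore $\bar{\mathfrak{m}}(\{e\})=1$, i.e.\ $\mathfrak{m}(C)=\mathfrak{m}(\pi^{-1}(\{e\}))=1$, so $\mathfrak{m}$ is supported on $C$.

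With both hypotheses in hand, Proposition \ref{prop-not-stable} yields that none of the groups $G$ is stable. The point I expect to require the most care is the final equivalence between the existence of a diffuse conjugation-invariant mean and having infinite FC-center, which is the mechanism recorded in the introduction relating such means to the conjugation-invariant approximating sequences $(\xi_n)$; everything else is a routine transfer of property (T) and a pushforward of means.
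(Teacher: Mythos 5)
Your overall strategy is the same as the paper's: push a conjugation-invariant mean forward along $G\to Q=G/C$ to the free product quotient, get relative property (T) of $(G,C)$ from the property (T) free factor containing $C$, and invoke Proposition \ref{prop-not-stable}. The relative property (T) argument and the pushforward are fine. The gap is in the last step, where you rule out $\bar{\mathfrak{m}}(\{e\})<1$ by appealing to the claim that a countable group carries a diffuse conjugation-invariant mean \emph{precisely when} its FC-center is infinite. That equivalence is false in the direction you need. Any infinite amenable ICC group --- for instance the group of finitely supported permutations of $\N$, or $\Z\wr\Z$ --- admits a bi-invariant mean, which is conjugation-invariant and diffuse (left-invariance kills all singletons), yet its FC-center is trivial. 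The statement recorded in the paper's introduction concerns sequences $(\xi_n)$ of \emph{exactly} conjugation-invariant $\ell^1$ functions; such functions are automatically supported on the FC-center, which is why that equivalence holds, but a conjugation-invariant \emph{mean} need not be a weak-$*$ limit of exactly conjugation-invariant $\ell^1$ functions, so no contradiction follows from $Q$ being ICC.

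Concretely: after you know that $\bar{\mathfrak{m}}$ vanishes on singletons of $Q\setminus\{e\}$ (which your ICC argument does give), what remains to be excluded is precisely that $Q$ is inner amenable, i.e., that $Q$ carries a conjugation-invariant mean giving no mass to $\{e\}$. ICC-ness alone cannot exclude this, by the amenable examples above. The input the paper uses here is a genuine theorem about free products, namely B\'edos--de la Harpe \cite[Th\'eor\`eme 5 (c)]{bh}: on a free product of two nontrivial groups, not both of order two, every conjugation-invariant mean is the Dirac mass at the identity. Applied to $Q=(\Gamma/R)\ast H$, resp.\ $Q=\Gamma\ast(E/A)$ (in each case one factor is infinite, so the degenerate case is excluded), this immediately gives $\bar{\mathfrak{m}}(\{e\})=1$ and hence that $\mathfrak{m}$ is supported on $C$, after which your application of Proposition \ref{prop-not-stable} goes through. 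So your proof becomes correct, and essentially identical to the paper's, once the false FC-center equivalence is replaced by this citation (or by a proof of the free product fact, e.g.\ via the action on the Bass--Serre tree); as written, the key step is unjustified.
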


\begin{proof}
Let $G=\Gamma \ast_R(H\times R)$ be the group in Example \ref{ex-ershov}.
Then $G$ surjects onto the free product $(\Gamma /R)\ast H$ with kernel $R$.
Since each conjugation-invariant mean on $(\Gamma /R)\ast H$ is supported on the trivial element (\cite[Th\'eor\`eme 5 (c)]{bh}), each $G$-conjugation invariant mean on $G$ is supported on $R$.
Since $\Gamma$ has property (T), so does the pair $(G, R)$.
Thus Proposition \ref{prop-not-stable} applies.

Let $G=(\Gamma \ltimes A)\ast_AE$ be the group in Example \ref{ex-prufer} or \ref{ex-caln}.
It similarly turns out that each $G$-conjugation invariant mean on $G$ is supported on $A$.
Since $E$ has property (T), so does the pair $(G, A)$.
Thus Proposition \ref{prop-not-stable} applies.
\end{proof}

\begin{rem}
Let $\Gamma$ be a countable group acting on a countably infinite amenable group $A$ by automorphisms.
The semi-direct product $G\coloneqq \Gamma \ltimes A$ then acts on $A$ by affine transformations, i.e., $\Gamma$ acts on $A$ by automorphisms, and $A$ acts on $A$ by left multiplication.
If the action of $G$ on $A$ admits an invariant mean, then the pair $(G, A)$ does not have property (T).
Indeed, the associated unitary representation of $G$ on $\ell^2(A)$ weakly contains the trivial representation, but has no $A$-invariant unit vector.

If each $\Gamma$-orbit in $A$ is finite, then the action of $G$ on $A$ admits an invariant mean (see the proof of \cite[Theorem 13, ii]{td}).
Therefore for the stable group $G=\Gamma \ltimes A$ reviewed in the beginning of this subsection, the pair $(G, A)$ does not have property (T).
We refer to \cite[Proposition 3.1]{dv}, \cite[Theorem 1.1]{kida-srt} and \cite[0.H]{td} for other relationships between stability and relative property (T).
\end{rem}


\section{Groups with non-commutative FC-center}\label{sec-noncom}

Let $G$ be a countable group with infinite FC-center $R$.
Suppose that the center of $R$ is finite.
In this section, we aim to prove that $G$ has the Schmidt property.

We set $N=\bigcap_{r\in R}C_G(r)$.
Then $R$ and $N$ commute and $N\cap R$ is exactly the center of $R$.
We may assume without loss of generality that $N\cap R$ is central in $G$ after passing to some finite index subgroup of $G$.
Indeed the subgroup $G_0\coloneqq \bigcap_{r\in N\cap R}C_G(r)$ is of finite index in $G$ since $N\cap R$ is finite, and $G_0$ commutes with $N\cap R$.
Since $N\cap R$ is central in $R$, we have $R<G_0$ and hence the FC-center of $G_0$ is equal to $R$.
If we set $N_0=\bigcap_{r\in R}C_{G_0}(r)$, then $N_0=N\cap G_0$ and hence $N_0\cap R$ is finite and central in $G_0$.
In general for a finite index inclusion $\Lambda <\Gamma$ of countable groups, if $\Lambda$ admits a free ergodic p.m.p.\ action which is Schmidt, then the action of $\Gamma$ induced (not co-induced) from it is also Schmidt.
Therefore after replacing $G$ with $G_0$, we may assume that $N\cap R$ is central in $G$.

Let $G=H_0>H_1>H_2>\cdots$ be a decreasing sequence of finite index subgroups of $G$ such that $\bigcap_n H_n=N$.
We can choose a sequence $(r_n)_{n\in \N}$ of elements of $R\setminus N$ such that
\begin{enumerate}
\item[(i)] if $n\neq m$, then $r_n$ and $r_m$ are distinct in the quotient group $R/(N\cap R)$, and
\item[(ii)] for each $n\in \N$, $r_n$ belongs to $C_G(r_1,\ldots, r_{n-1})\cap H_{n-1}$.
\end{enumerate}
Indeed we first note that $R/(N\cap R)$ is infinite since $R$ is infinite and $N\cap R$ is finite.
Let $r_1$ be an arbitrary element of $R\setminus N$.
If $r_1,\ldots, r_{n-1}$ are chosen, then $C_G(r_1,\ldots, r_{n-1})\cap H_{n-1}$ is of finite index in $G$ and hence its image in $G/(N\cap R)$ is of finite index.
The intersection of that image with $R/(N\cap R)$ is of finite index in $R/(N\cap R)$ and hence infinite.
If we let $r_n$ be an element of $R\setminus N$ whose image in $R/(N\cap R)$ belongs to that intersection and is distinct from the images of $r_1,\ldots, r_{n-1}$, then conditions (i) and (ii) are fulfilled.
For an integer $n\geq 2$, we set
\[G_n=C_G(r_1,\ldots, r_{n-1})\cap H_{n-1}\cap C_G(r_n).\]
Let $G\c (X, \mu)$ be the ergodic p.m.p.\ action obtained as the inverse limit of the system of the p.m.p.\ actions $G\c G/G_n$ given by left multiplication.
Then $N$ acts on $X$ trivially, and the induced action $G/N\c (X, \mu)$ is free because $\bigcap_n H_n=N$.

We show that the translation groupoid $(\calg, \mu)\coloneqq G\ltimes (X, \mu)$ admits a central sequence $(T_n)$ in its full group such that $T_n^\circ x\neq x$ and $T_nx\in R$ for all $n$ and all $x\in X$.
Let $p_n\colon X\to G/G_n$ be the projection obtained from the inverse limit construction.
We define a map $T_n\colon X\to G$ by $T_nx=gr_ng^{-1}$ for $x\in p_n^{-1}(gG_n)$ and $g\in G$.
This does not depend on the choice of $g$ because $r_n$ commutes with every element of $G_n$ by the definition of $G_n$.
Since $r_n$ belongs to $G_n$ by condition (ii), $T_n^\circ$ preserves the subset $p_n^{-1}(gG_n)$ for each $g\in G$.
Therefore $T_n$ belongs to $[\calg]$ and we have $\mu(T_n^\circ A\bigtriangleup A)\to 0$ for every Borel subset $A\subset X$.
For each $h\in G$, $T_n$ commutes with the element $\phi_h\in [\calg]$ defined as the constant map with value $h$.
Indeed if $x\in p_n^{-1}(gG_n)$ with $g\in G$, then $(T_n\circ \phi_h)x=T_n(hx)h=hgr_ng^{-1}$, which is equal to $(\phi_h\circ T_n)x$.
Therefore $(T_n)$ is a central sequence in $[\calg]$, and we have $T_n^\circ x\neq x$ for every $x\in X$ because $r_n$ does not belong to $N$.

We thus obtained the ergodic p.m.p.\ action $G\c (X, \mu)$ such that $N$ acts on $X$ trivially, the induced action of $G/N$ on $X$ is free, and there exists a central sequence $(T_n)$ in the full group $[G\ltimes (X, \mu)]$ such that $T_nx\neq x$ and $T_nx\in R$ for all $n$ and all $x\in X$.
Recall also that $R$ is contained in the centralizer $C_G(N)$ and that $N\cap R$ is finite and central in $G$.
In order to apply Theorem \ref{thm-sch-ape} or \ref{thm-sch-per}, we check that at least one of the assumptions in those two theorems is fulfilled.
For $p\in \N$, let $A_n^p\subset X$ be the set of $p$-periodic points of $T_n^\circ$.
If every $p\in \N$ satisfies $\mu(A_n^p)\to 0$ as $n\to \infty$, then letting $\Omega$ be a singleton and $M_\omega =N$ in Theorem \ref{thm-sch-ape}, we apply it and conclude the Schmidt property for $G$.
Suppose otherwise, i.e., suppose that for some integer $p\geq 2$, the measure $\mu(A_n^p)$ does not converge to $0$ as $n\to \infty$.
After passing to a subsequence, we may assume that $\mu(A_n^p)$ is uniformly positive.
If $x\in A_n^p$, then $(T_n^\circ)^p x=x$ and hence $(T_n)^px\in N$ and $(T_n)^px\in N\cap R$.
Letting $M=N$ and $L=N\cap R$ in Theorem \ref{thm-sch-per}, we apply it and conclude the Schmidt property of $G$.



\section{Groups with commutative FC-center}\label{sec-com}

\subsection{Groupoid extensions}\label{subsec-ext}

Let $G$ be a countable group and let $A$ be an abelian normal subgroup of $G$.
We set $\Gamma =G/A$ and choose a section $\sfs \colon \Gamma \to G$ of the quotient map, with $\sfs(e)=e$.
We then have the 2-cocycle $\sigma \colon \Gamma \times \Gamma \to A$ defined by $\sigma(g, h)\sfs(gh)=\sfs(g)\sfs(h)$ for $g, h\in \Gamma$.
The map $\sigma$ satisfies the 2-cocycle identity
\[\sigma(g, h)\sigma(gh, k)={}^g\sigma(h, k)\sigma(g, hk)\]
for all $g, h, k\in \Gamma$, where we set ${}^ga=\sfs(g)a\sfs(g)^{-1}$ for $g\in \Gamma$ and $a\in A$.
Note that ${}^ga$ does not depend on the choice of the section $\sfs$.

Fix a compact abelian metrizable group $L$.
We define $X$ as the group of homomorphisms from $A$ into $L$, identified with the closed subgroup of the product group $\prod_AL$.
Let $\mu$ denote the normalized Haar measure on $X$.
The group $G$ acts on $X$ by $(g\tau)(a)=\tau(g^{-1}ag)$ for $g\in G$, $a\in A$ and $\tau \in X$, and this gives rise to the action of $\Gamma$ on $X$.
We set $\calu =X\times L$ and regard it as the bundle over $X$ with respect to the projection onto the first coordinate.
We also regard $\calu$ as the groupoid with unit space $X$ such that the range and source maps are the projection onto $X$, and the product is given by $(\tau, l)(\tau, m)=(\tau, lm)$ for $\tau \in X$ and $l, m\in L$.
The translation groupoid $X\rtimes \Gamma$ acts on $\calu$ by $(\tau, g)(g^{-1}\tau, l)=(\tau, l)$ for $\tau \in X$, $g\in \Gamma$ and $l\in L$.

Let $(X\rtimes \Gamma)^{(2)}$ be the set of composable pairs of the groupoid $X\rtimes \Gamma$, i.e., the set of all pairs of the form $((\tau, g), (g^{-1}\tau, h))$ for some $\tau \in X$ and $g, h\in \Gamma$.
The pair of that form is also denoted by $(\tau, g, h)$ for brevity.
We define the 2-cocycle $\tsigma \colon (X\rtimes \Gamma)^{(2)}\to \calu$ by
\begin{equation}\label{tsigma}
\tsigma(\tau, g, h)=(\tau, \langle \tau, \sigma(g, h)\rangle),
\end{equation}
where $\langle \tau , a\rangle$ stands for $\tau(a)$ for $\tau \in X$ and $a\in A$.
Indeed the map $\tsigma$ satisfies the 2-cocycle identity:
\begin{equation}\label{sigma-tilde}
\tsigma(\tau, g, h)\tsigma(\tau, gh, k)={}^{(\tau, g)}\tsigma(g^{-1}\tau, h, k)\tsigma(\tau, g, hk),
\end{equation}
where we set ${}^{(\tau, g)}(g^{-1}\tau, l)=(\tau, l)$ for $(\tau, g) \in X\rtimes \Gamma$ and $l\in L$, which is the result of the action of $(\tau, g)$ on $(g^{-1}\tau, l)\in \calu$.
Let us check equation (\ref{sigma-tilde}):
For the first coordinate in $X$, both sides are $\tau$.
For the second coordinate in $L$, the left hand side is
\begin{align*}
&\langle \tau, \sigma(g, h)\rangle \langle \tau, \sigma(gh, k)\rangle =\langle \tau, \sigma(g, h)\sigma(gh, k)\rangle =\langle \tau, {}^g \sigma(h, k)\sigma(g, hk)\rangle \\
&=\langle \tau, {}^g \sigma(h, k)\rangle \langle \tau, \sigma(g, hk)\rangle =\langle g^{-1}\tau, \sigma(h, k)\rangle \langle \tau, \sigma(g, hk)\rangle,
\end{align*}
which is equal to the second coordinate of the right hand side.

We now construct the groupoid extension
\begin{equation}\label{ext}
1\to \calu \to \calg_{\tsigma}\to X\rtimes \Gamma \to 1
\end{equation}
associated with the 2-cocycle $\tsigma$ (see \cite{series} for the extension associated with a 2-cocycle of an equivalence relation with coefficient in a bundle of abelian Polish groups).
As a set, we define $\calg_{\tsigma}$ as the fibered product $\calu \times_X(X\rtimes \Gamma)$ with respect to the range map of $X\rtimes \Gamma$.
The range and source of $(u, g)\in \calg_{\tsigma}$ with $u\in \calu$ and $g\in X\rtimes \Gamma$ are defined as the range and source of $g$, respectively.
The product of $\calg_{\tsigma}$ is given by
\begin{equation}\label{product}
(u, g)(v, h)=(u{}^gv\tsigma(g, h), gh)
\end{equation}
for $(u, g), (v, h)\in \calg_{\tsigma}$ with $(g, h)$ composable.
This product is associative.
Indeed for three elements $(u, g), (v, h), (w, k)\in \calg_{\tsigma}$ with $(g, h)$ and $(h, k)$ composable, we have
\begin{align*}
&(u{}^gv\tsigma(g, h), gh)(w, k)=(u{}^gv\tsigma(g, h){}^{gh}w\tsigma(gh, k), ghk)\\
&=(u{}^gv{}^{gh}w{}^g\tsigma(h, k)\tsigma(g, hk), ghk)=(u, g)(v{}^hw\tsigma(h, k), hk).
\end{align*}
The inverse of an element $(u, g)\in \calg_{\tsigma}$ is given by
\begin{equation}\label{inverse}
(({}^{g^{-1}}\! \!  u)^{-1}\tsigma(g^{-1}, g)^{-1}, g^{-1})=(({}^{g^{-1}}\!  (u^{-1})) ( {}^{g^{-1}}\! (\tsigma(g, g^{-1})^{-1})), g^{-1}),
\end{equation}
where the left hand side is a left inverse of $(u, g)$, the right hand side is a right inverse of $(u, g)$, and these two coincide because it follows from $\sfs(e)=e$ that $\sigma(g, e)=e=\sigma(e, g)$ for every $g\in \Gamma$, and $\sigma(g, g^{-1})={}^g\sigma(g^{-1}, g)$ by the 2-cocycle identity.
All these groupoid operations are Borel, and we thus obtain a Borel groupoid $\calg_{\tsigma}$.
We have the projection from $\calg_{\tsigma}=\calu \times_X(X\rtimes \Gamma)$ onto $X\rtimes \Gamma$, whose kernel is identified with $\calu$ via the inclusion of $\calu$ into $\calg_{\tsigma}$, $(\tau, l)\mapsto ((\tau, l), (\tau, e))$ for $\tau \in X$ and $l\in L$.
Consequently the groupoid extension (\ref{ext}) is obtained.

An element $((\tau, l), (\tau, \gamma))\in \calg_{\tsigma}=\calu \times_X(X\rtimes \Gamma)$ is also denoted by $(\tau, l, \gamma)$ for brevity.
We define a homomorphism $\eta \colon X\rtimes G\to \calg_{\tsigma}$ by
\[\eta(\tau, (a, \gamma))=(\tau, \tau(a), \gamma)\]
for $\tau \in X$, $a\in A$ and $\gamma \in \Gamma$, where $G$ is identified with $A\times \Gamma$ via the map $(a, \gamma )\mapsto a\sfs(\gamma )$.
To check that $\eta$ is indeed a homomorphism, let us recall the product of two elements of $A\times \Gamma$ inherited from $G$:
\[(a, \gamma )(b, \delta)=(a {}^\gamma b\sigma(\gamma, \delta), \gamma \delta)\]
for $a, b\in A$ and $\gamma, \delta \in \Gamma$.
If we put $g=(a, \gamma)$ and $h=(b, \delta)$ and regard them as elements of $G$, then for each $\tau \in X$, we have
\begin{align*}
&\eta(\tau, gh)=(\tau, \tau(a{}^\gamma b\sigma(\gamma, \delta)), \gamma \delta )=(\tau, \tau(a)(\gamma^{-1}\tau)(b)\tau(\sigma(\gamma, \delta)), \gamma \delta)\\
&=\bigl( (\tau, \tau(a)){}^{(\tau, \gamma)}(\gamma^{-1}\tau, (\gamma^{-1}\tau)(b))\tsigma(\tau, \gamma, \delta), (\tau, \gamma \delta)\bigr)\\
&=(\tau, \tau(a), \gamma)(\gamma^{-1}\tau, (\gamma^{-1}\tau)(b), \delta)=\eta(\tau, g)\eta(\gamma^{-1}\tau, h),
\end{align*}
where in the fourth term, the element of $\calg_{\tsigma}$ is written as a pair of an element of $\calu$ and an element of $X\rtimes \Gamma$.
Therefore $\eta$ is a homomorphism.
The kernel of $\eta$ is given by
\[\ker \eta =\{ \, (\tau, a)\in X\rtimes A\mid a \in \ker \tau \, \}.\]
The image of $X\rtimes A$ under $\eta$ is given by
\[\eta(X\rtimes A)=\{ \, (\tau, \tau(a))\in \calu \mid a\in A\, \}.\]


\subsection{A free action from co-induction}\label{subsec-shift-coind}

We keep the notation in the previous subsection, where we constructed the groupoid $\calg_{\tsigma}$.
In this subsection, we construct a free p.m.p.\ action of $\calg_{\tsigma}$, which will be obtained as the action co-induced from the shift action of $\calu$ onto itself.
This action was not treated in Subsection \ref{subsec-coind} since $\calg_{\tsigma}$ is not necessarily discrete.
We do not aim to discuss co-induced actions for non-discrete Borel groupoids in full generality.

We set $\calg =\calg_{\tsigma}$ and $\calq =X\rtimes \Gamma$ for brevity.
We have the groupoid extension
\[1\to \calu \to \calg \to \calq \to 1.\]
Recall that $\calu = X\times L$ is the bundle of a compact abelian metrizable group $L$, and denote by $\calu_x$ the fiber of $\calu$ at $x$, i.e., $\{ x\} \times L$.
Each fiber $\calu_x$ is often identified with $L$ naturally if there is no cause of confusion.
The bundle $\calu$ is a groupoid on $X$ and acts on itself by left multiplication.
We co-induce this action to the action of $\calg$ in the same manner as in Subsection \ref{subsec-coind} as follows:
For $x\in X$, we set
\[Z_x=\{ \, f\colon \calg^x\to L\mid f(gu^{-1})=uf(g) \ \text{for all} \ g\in \calg^x\ \text{and all}\ u\in \calu_{s(g)}\ \}\]
and define $Z$ as the disjoint union $Z=\bigsqcup_{x\in X}Z_x$.
For each $f\in Z_x$, it is natural to regard the value $f(g)\in L$ at $g\in \calg^x$ as an element of $\calu_{s(g)}$.
The set $Z$ is fibered with respect to the projection $p\colon Z\to X$ sending each element of $Z_x$ to $x$.
Then $\calg$ acts on $Z$ by
\[(gf)(h)=f(g^{-1}h)\]
for $g\in \calg_x$, $h\in \calg^{r(g)}$ and $f\in Z_x$ with $x\in X$.

We define a measure-space structure on $Z$.
Recall that as a set, $\calg$ is the fibered product $\calu \times_X\calq$ with respect to the range map of $\calq$.
For $\gamma \in \Gamma$, we define a map $\psi_\gamma \colon X\to \calg$ by $\psi_\gamma(x)=((x, e), (x, \gamma))$ for $x\in X$.
Then for each $x\in X$, we have $\psi_\gamma(x)\in \calg^x$ and the family $\{ \psi_\gamma(x)\}_{\gamma \in \Gamma}$ is a complete set of representatives of all the equivalence classes in $\calg^x$, where the equivalence relation on $\calg^x$ is defined as follows:
two elements $g, h\in \calg^x$ are equivalent if and only if $g^{-1}h\in \calu$.
Then $Z$ is identified with the product space $X\times \prod_\Gamma L$ under the map sending each $f\in Z_x$ with $x\in X$ to $(x, (f(\psi_\gamma(x)))_\gamma)$.
The measure-space structure on $Z$ is induced by this identification, where the space $X\times \prod_\Gamma L$ is equipped with the product measure of $\mu$ and the normalized Haar measure on $L$.
The action of $\calg$ on $Z$ is Borel and preserves the probability measure on $Z$ in the following sense:

\begin{prop}\label{prop-borel}
With the above notation,
\begin{enumerate}
\item[(i)] for all $\gamma \in \Gamma$, $x\in X$ and $l\in L$, we have
\[\psi_\gamma(x)^{-1}(x, l)\psi_\gamma(x)=(\gamma^{-1}x, l),\]
where we identify $\calu$ with a subset of $\calg$ under the injection of $\calu$ into $\calg$.
\item[(ii)] We define an action of the group $L$ on $Z$ by $lf=(x, l)f$ for $l\in L$ and $f\in Z_x$ with $x\in X$.
Then this action is Borel, p.m.p.\ and free.
\item[(iii)] For each $\gamma \in \Gamma$, the action of $\psi_\gamma$ on $Z$ is Borel and p.m.p., that is, the map from $Z$ into itself sending each $f\in Z_x$ with $x\in X$ to $\psi_\gamma(\gamma x)f\in Z_{\gamma x}$ is Borel and p.m.p.
\item[(iv)] Suppose that either $L$ is infinite and $|\Gamma|\geq 3$ or $L$ is non-trivial and $\Gamma$ is infinite.
Then the action of $\calg$ on $Z$ is essentially free, i.e., for almost every $f\in Z$, letting $x\in X$ be the point with $f\in Z_x$, we have $gf\neq f$ for each $g\in \calg_x$ except for the unit at $x$.
\end{enumerate}
\end{prop}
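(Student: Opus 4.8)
The plan is to make all four statements computational by passing to the explicit coordinates $Z_x\cong \prod_\Gamma L$, $f\mapsto (f(\psi_\gamma(x)))_{\gamma \in \Gamma}$, introduced just before the proposition, and reading off how $\calu$ and each $\psi_\gamma$ act on these coordinates. For (i) I would simply expand $\psi_\gamma(x)^{-1}(x,l)\psi_\gamma(x)$ using the product (\ref{product}), the inverse (\ref{inverse}) and the formula (\ref{tsigma}) for $\tsigma$; the point is that the $L$-coordinate of an element of $\calu$ is carried along unchanged by the action ${}^{(\tau,\gamma)}$ of $X\rtimes \Gamma$ on $\calu$, so the cocycle contributions in the conjugation cancel and only the base point is moved from $x$ to $\gamma^{-1}x$. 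Granting (i), the constraint defining $Z_x$ gives $f(\psi_\gamma(x)u)=u^{-1}f(\psi_\gamma(x))$, which is the tool for computing the two actions in coordinates. For (ii), a short calculation with (\ref{product}) yields $\psi_\gamma(x)(\gamma^{-1}x,m)=(x,m,\gamma)$, whence $((x,l)f)(\psi_\gamma(x))=l\cdot f(\psi_\gamma(x))$; thus $L$ acts by the diagonal translation $(f_\gamma)_\gamma \mapsto (lf_\gamma)_\gamma$ on $\prod_\Gamma L$, which is manifestly Borel, preserves the product Haar measure, and is free because translation of $L$ on itself is free. For (iii), the same bookkeeping shows that $\psi_\gamma$ covers the measure-preserving action $x\mapsto \gamma x$ on $X$ and, on the fibre coordinates, is the coordinate permutation $\delta \mapsto \gamma^{-1}\delta$ followed by a translation in each coordinate by a Borel $L$-valued function of $x$ built from $\tsigma$; since permutations of coordinates and translations both preserve product Haar measure, $\psi_\gamma$ is p.m.p.

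The substance is (iv). The first reduction is that $gf=f$ for $f\in Z_x$ forces $r(g)=s(g)=x$, so essential freeness at $f$ is equivalent to freeness of the isotropy group $\calg_x^x=\{(x,m,\delta)\mid \delta x=x\}$ acting on $f$; this group is an extension of the countable group $\Gamma_x=\{\delta \in \Gamma \mid \delta x=x\}$ by $L$. The case $\delta=e$, $m\neq e$ is exactly the free $L$-action of (ii). For $\delta \neq e$ in $\Gamma_x$ I would compute, again from (\ref{product}) and (\ref{inverse}), the twisted-shift formula
\[((x,m,\delta)f)_\gamma=t_\gamma^{(m)}\, f_{\delta^{-1}\gamma}, \qquad t_\gamma^{(m)}=m\cdot s_\gamma,\]
where $s_\gamma \in L$ depends on $x,\delta,\gamma$ through $\tsigma$ but not on $m$, and the map $\gamma \mapsto \delta^{-1}\gamma$ permutes $\Gamma$ with orbits the right cosets $\langle \delta \rangle \gamma$. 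Thus $(x,m,\delta)f=f$ says $f_\gamma=m s_\gamma f_{\delta^{-1}\gamma}$ for all $\gamma$.

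The key manoeuvre is to eliminate the unknown $m$. Writing $m=f_\gamma (s_\gamma f_{\delta^{-1}\gamma})^{-1}$ and equating this across two indices in different orbits, or comparing consecutive ratios inside one orbit, produces $m$-free constraints on $f$; hence the set $B_{x,\delta}$ of $f$ fixed by \emph{some} $(x,m,\delta)$ is contained in a set cut out by these $m$-independent equations, which circumvents the uncountable union over $m\in L$. Each such equation isolates one coordinate as a Borel function of finitely many others, so its solution set has conditional measure $0$ when $L$ is infinite and conditional measure $1/|L|\ls 1/2$ when $L$ is finite. I would then play the two hypotheses against the orbit structure. If $L$ is infinite and $|\Gamma|\ge 3$, then for each $\delta \neq e$ either $\langle \delta \rangle \neq \Gamma$, giving at least two orbits on which a single cross-orbit equation already forces $B_{x,\delta}$ to be null, or $\langle \delta \rangle=\Gamma$ is finite cyclic of order $\ge 3$, where the three-term relation $f_{\delta^{k+1}}f_{\delta^{k-1}}=s_{\delta^{k+1}}s_{\delta^k}^{-1}f_{\delta^k}^2$ inside the single orbit does the same. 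If instead $\Gamma$ is infinite and $L$ is nontrivial, then any $\delta \neq e$ produces infinitely many orbits (or an infinite orbit), so I obtain infinitely many constraints on pairwise disjoint blocks of coordinates, each satisfied with probability $\ls 1/2$; by independence under the product measure their intersection is null. In either case $B_{x,\delta}$ is null, and since $\Gamma_x$ is countable, the whole bad set $B_x=\bigcup_{\delta}B_{x,\delta}$ together with the $L$-part is null; Fubini over $x\in X$ then gives essential freeness. The main obstacle, and the reason both hypotheses appear, is precisely this tension: a single $m$-free equation only kills positive measure when $L$ is infinite, so for finite $L$ one must manufacture infinitely many independent constraints, which requires $\Gamma$ to be infinite, whereas for infinite $L$ one instead needs $|\Gamma|\ge 3$ to guarantee enough orbits (or a long enough orbit) to isolate a coordinate at all.
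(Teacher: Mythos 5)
Your proposal is correct and takes essentially the same route as the paper: parts (i)--(iii) are the same coordinate computations on $X\times\prod_\Gamma L$, and in (iv) your key manoeuvre of eliminating the unknown $m\in L$ to obtain $m$-free equations among the coordinates (thereby avoiding the uncountable union over $L$) and then invoking Fubini/independence for the product measure is exactly the paper's argument, which simply picks suitable indices directly ($\gamma_1\neq e,\gamma^{-1}$ in the first case, and disjoint pairs from an infinite set $S$ with $S\cap\gamma^{-1}S=\emptyset$ in the second) instead of your $\langle\delta\rangle$-coset bookkeeping. One cosmetic slip: your first-case dichotomy omits the possibility $\langle\delta\rangle=\Gamma$ infinite cyclic, but your own three-term relation (or, alternatively, your second-case argument, whose hypotheses then hold) handles it verbatim, so there is no real gap.
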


\begin{proof}
To prove assertion (i), we pick $\gamma \in \Gamma$, $x\in X$ and $l\in L$ and set $g=(x, \gamma)\in \calq$.
It follows from formula (\ref{inverse}) that $\psi_\gamma(x)^{-1}=(\tsigma(g^{-1}, g)^{-1}, g^{-1})$ and therefore
\begin{align*}
&\psi_\gamma(x)^{-1}(x, l)\psi_\gamma(x)=(\tsigma(g^{-1}, g)^{-1}, g^{-1})((x, l), g)\\
&=(\tsigma(g^{-1}, g)^{-1}{}^{g^{-1}}\! (x, l)\tsigma(g^{-1}, g), (\gamma^{-1}x, e))=((\gamma^{-1}x, l), (\gamma^{-1}x, e)),
\end{align*}
where the first and second equations are derived from formula (\ref{product}).
Assertion (i) follows.

We prove assertion (ii).
Pick $l\in L$ and $f\in Z_x$ with $x\in X$.
The element $f$ is identified with the element of $X\times \prod_\Gamma L$ given by the pair of $x$ and the function $\gamma \mapsto  f(\psi_\gamma(x))$.
Let us describe the element of $X\times \prod_\Gamma L$ corresponding to $lf$, which is the pair of $x$ and the function $\gamma \mapsto (lf)(\psi_\gamma(x))$.
For each $\gamma \in \Gamma$, we have
\begin{align*}
&(lf)(\psi_\gamma(x))=f((x, l)^{-1}\psi_\gamma(x))=f(\psi_\gamma(x)\psi_\gamma(x)^{-1}(x, l^{-1})\psi_\gamma(x))\\
&=f(\psi_\gamma(x)(\gamma^{-1}x, l^{-1}))=l(f(\psi_\gamma(x))),
\end{align*}
where we apply assertion (i) in the third equation.
Therefore the action of $l$ on $X\times \prod_\Gamma L$ is given by $(x, (l_\gamma)_\gamma)\mapsto (x, (ll_\gamma)_\gamma)$, and the action of $L$ on $Z$ is Borel, p.m.p.\ and free.

We prove assertion (iii).
Pick $\gamma \in \Gamma$ and $f\in Z_x$ with $x\in X$.
The element $f$ is identified with the element of $X\times \prod_\Gamma L$ given by the pair of $x$ and the function $\delta \mapsto f(\psi_\delta(x))$.
The element $\psi_\gamma(\gamma x)f$ corresponds to the pair of $\gamma x$ and the function
\[\delta \mapsto (\psi_\gamma(\gamma x)f)(\psi_\delta(\gamma x))=f(\psi_\gamma(\gamma x)^{-1}\psi_\delta(\gamma x)).\]
We set $g=(\gamma x, \gamma)$ and $h=(\gamma x, \delta)$.
By formula (\ref{inverse}), $\psi_\gamma(\gamma x)^{-1}=(\tsigma(g^{-1}, g)^{-1}, g^{-1})$.
For each $\delta \in \Gamma$, if we define $k\in L$ by
\begin{equation}\label{kgh}
(x, k)=\tsigma(g^{-1}, g)^{-1}\tsigma(g^{-1}, h),
\end{equation}
then we have
\begin{align*}
&\psi_\gamma(\gamma x)^{-1}\psi_\delta(\gamma x)=(\tsigma(g^{-1}, g)^{-1}, g^{-1})((\gamma x, e), h)\\
&=(\tsigma(g^{-1}, g)^{-1}\tsigma(g^{-1}, h), g^{-1}h)=(x, k)\psi_{\gamma^{-1}\delta}(x)\\
&=\psi_{\gamma^{-1}\delta}(x)\psi_{\gamma^{-1}\delta}(x)^{-1}(x, k)\psi_{\gamma^{-1}\delta}(x)=\psi_{\gamma^{-1}\delta}(x)((\gamma^{-1}\delta)^{-1}x, k),
\end{align*}
where the second equation follows from formula (\ref{product}) and the fifth equation follows from assertion (i).
Therefore
\[f(\psi_\gamma(\gamma x)^{-1}\psi_\delta(\gamma x))=k^{-1}(f(\psi_{\gamma^{-1}\delta}(x))),\]
and the action of $\psi_\gamma$ on $X\times \prod_\Gamma L$ is given by
\[(x, (l_\delta)_\delta)\mapsto (\gamma x, (k_{\gamma, \delta, x}^{-1}l_{\gamma^{-1}\delta})_\delta),\]
where the element $k=k_{\gamma, \delta, x}\in L$ is determined by equation (\ref{kgh}).
By the definition of $\tsigma$ in (\ref{tsigma}), the function $x\mapsto k_{\gamma, \delta, x}$ is Borel.
Hence the action of $\psi_\gamma$ is Borel and also p.m.p.\ by the above description of the action.
Assertion (iii) follows.

We prove assertion (iv).
Recall that each $g\in \calg_x$ with $x\in X$ is written as $(\gamma x, l, \gamma)$ for some $\gamma \in \Gamma$ and $l\in L$.
By assertion (ii), it suffices to show that for each non-trivial $\gamma \in \Gamma$, there exists a conull subset $\bar{Z}\subset Z$ such that for all $f\in \bar{Z}$ and all $l\in L$, letting $x\in X$ be the point with $f\in Z_x$, we have $(\gamma x, l, \gamma)f\neq f$.
We fix a non-trivial $\gamma \in \Gamma$.
The action of $g=(\gamma x, l, \gamma)$ on $X\times \prod_\Gamma L$ is described as
\[(x, (l_\delta)_\delta)\mapsto (\gamma x, (lk_{\gamma, \delta, x}^{-1}l_{\gamma^{-1}\delta})_\delta).\]
Thus if $g$ fixes the point $(x, (l_\delta)_\delta)$, then $lk_{\gamma, \delta, x}^{-1}l_{\gamma^{-1}\delta}=l_\delta$ for all $\delta \in \Gamma$.

Suppose that $L$ is infinite and $|\Gamma |\geq 3$.
Pick a non-trivial element $\gamma_1\in \Gamma$ with $\gamma_1\neq \gamma^{-1}$.
We fix $x\in X$.
If a point $(l_\delta)_\delta$ is such that for some $l\in L$, we have $lk_{\gamma, \delta, x}^{-1}l_{\gamma^{-1}\delta}=l_\delta$ for all $\delta \in \Gamma$, then $lk_{\gamma, e, x}^{-1}l_{\gamma^{-1}}=l_e$ and $lk_{\gamma, \gamma_1, x}^{-1}l_{\gamma^{-1}\gamma_1}=l_{\gamma_1}$.
Deleting $l$, we thus obtain
\begin{equation}\label{lgamma1}
l_{\gamma_1}=l_el_{\gamma^{-1}}^{-1}l_{\gamma^{-1}\gamma_1}k_{\gamma, e, x}k_{\gamma, \gamma_1, x}^{-1},
\end{equation}
which says that $l_{\gamma_1}$ is determined if $l_e$, $l_{\gamma^{-1}}$ and $l_{\gamma^{-1}\gamma_1}$ are determined.
The element $\gamma_1$ is distinct from all of $e$, $\gamma^{-1}$ and $\gamma^{-1}\gamma_1$.
Hence by Fubini's theorem, the set of points $(l_\delta)_\delta$ satisfying equation (\ref{lgamma1}) is null, where we use the assumption that $L$ is infinite and thus each singleton subset of $L$ is null.
Since $x$ is an arbitrary point of $X$, by Fubini's theorem again, the set of points $(x, (l_\delta)_\delta)\in X\times \prod_\Gamma L$ satisfying equation (\ref{lgamma1}) is null.
Thus it suffices to let $\bar{Z}$ be the complement of that null set.

Suppose next that $L$ is non-trivial and $\Gamma$ is infinite.
Then there exists an infinite subset $S\subset \Gamma$ such that $S$ and $\gamma^{-1}S$ are disjoint.
We fix $x\in X$.
Let $(l_\delta)_\delta$ be a point such that for some $l\in L$, we have $lk_{\gamma, \delta, x}^{-1}l_{\gamma^{-1}\delta}=l_\delta$ for all $\delta \in \Gamma$.
As in the previous paragraph, for all distinct $\gamma_0, \gamma_1\in S$, we then have
\begin{equation}\label{lgamma10}
l_{\gamma_1}=l_{\gamma_0}l_{\gamma^{-1}\gamma_0}^{-1}l_{\gamma^{-1}\gamma_1}k_{\gamma, \gamma_0, x}k_{\gamma, \gamma_1, x}^{-1}.
\end{equation}
The element $\gamma_1$ is distinct from all of $\gamma_0$, $\gamma^{-1}\gamma_0$ and $\gamma^{-1}\gamma_1$.
Hence by Fubini's theorem, for all distinct $\gamma_0, \gamma_1\in S$, the set of points $(l_\delta)_\delta$ satisfying equation (\ref{lgamma10}) has measure less than $1$, where we use the assumption that $L$ is non-trivial and thus each singleton subset of $L$ has measure less than $1$.
Since we have mutually disjoint, infinitely many pairs of distinct elements of $S$, the set of points $(l_\delta)_\delta$ satisfying equation (\ref{lgamma10}) for all distinct $\gamma_0, \gamma_1\in S$ is null.
We thus obtain $\bar{Z}$ as well as before, and assertion (iv) follows.
\end{proof}


\subsection{The case where condition $(\star)$ holds}\label{subsec-star}

Let $G$ be a countable group and let $A$ be an infinite abelian normal subgroup of $G$ contained in the FC-center of $G$.
Suppose that each finite subset of $A$ has finite normal closure in $G$ and let $A_1<A_2<\cdots$ be a strictly increasing sequence of finite subgroups of $A$ such that each $A_n$ is normalized by $G$.
Suppose further that condition $(\star)$ introduced in Subsection \ref{subsec-outline} holds, i.e., for all $N\in \N$, we have $\lim_n|F_{n, N}|/|A_n|=1$, where $F_{n, N}$ is the set of elements of $A_n$ whose order is more than $N$.
Under these assumptions, we aim to construct a free p.m.p.\ Schmidt action of $G$.
We may assume that $G/A$ is infinite because otherwise $G$ is amenable.
This assumption will be used in applying Proposition \ref{prop-borel} (iv) later, and not used for other purposes.

We set $\Gamma =G/A$ and choose a section $\sfs \colon \Gamma \to G$ of the quotient map with $\sfs(e)=e$.
We then obtain the 2-cocycle $\sigma \colon \Gamma \times \Gamma \to A$.
We define $X$ as the dual group $\wh{A}$ of $A$, i.e., the group of homomorphisms from $A$ into the torus $\T =\{ \, z\in \C \mid |z|=1\, \}$.
Let $\mu$ be the normalized Haar measure on $X$.
We recall the construction in Subsection \ref{subsec-ext}.
Define the action of $G$ on $X$ by $(g\tau)(a)=\tau(g^{-1}ag)$ for $g\in G$, $a\in A$ and $\tau \in X$, which induces the action of $\Gamma$ on $X$.
Let $\calu \coloneqq X\times \T$ be the bundle over $X$, which is a groupoid with unit space $X$.
Then we obtain the 2-cocycle $\tsigma \colon (X\rtimes \Gamma)^{(2)}\to \calu$ by formula (\ref{tsigma}) and obtain the groupoid extension
\[1\to \calu \to \calg_{\tsigma}\to X\rtimes \Gamma \to 1\]
together with the homomorphism $\eta \colon X\rtimes G\to \calg_{\tsigma}$ such that
\[\ker \eta = \{ \, (\tau, a)\in X\rtimes A\mid a\in \ker \tau \, \}\]
and $\eta(\tau, a)=(\tau, \tau(a))\in \calu$ for all $a\in A$ and $\tau \in X$.

Let $\calg_{\tsigma}\c (Z, \zeta)$ be the free p.m.p.\ action constructed in Subsection \ref{subsec-shift-coind}, i.e., the action co-induced from the shift action of $\calu$ on itself.
The space $Z$ is fibered over $X$.
The fiber at $\tau \in X$ is denoted by $Z_\tau$.
For $n\in \N$, let $\Gamma_n$ be the group of elements of $\Gamma$ acting on $A_n$ trivially.
Let $\Gamma \c (Y, \nu)$ be the profinite p.m.p.\ action associated with the system of the p.m.p.\ action $\Gamma \c \Gamma /\Gamma_n$ given by left multiplication.
Through the quotient map from $\calg_{\tsigma}$ onto $\Gamma$ factoring through $X\rtimes \Gamma$, we obtain the p.m.p.\ action $\calg_{\tsigma}\c (Y, \nu)$.
Then $\calg_{\tsigma}$ acts on $Y\times Z$ diagonally, where $Y\times Z$ is fibered over $X$ with respect to the map sending each element of $Y\times Z_\tau$ to $\tau$ for each $\tau \in X$.

Through the homomorphism $\eta \colon X\rtimes G\to \calg_{\tsigma}$, we obtain the p.m.p.\ action of $X\rtimes G$ on the product space $(W, \omega)\coloneqq (Y\times Z, \nu \times \zeta)$.
We then obtain the p.m.p.\ action $G\c (W, \omega)$ given by $g(y, z)=(g\tau, g)(y, z)$ for $g\in G$, $y\in Y$ and $z\in Z_\tau$ with $\tau \in X$.
The action of $A$ on $W$ is given by $a(y, z)=(y, (\tau, \tau(a))z)$ for each $a\in A$.
Recall that we defined the action of $\T$ on $Z$ by $tz =(\tau, t)z$ for $t\in \T$ and $z\in Z_\tau$ with $\tau \in X$ in Proposition \ref{prop-borel} (ii).
Thus, with respect to this action, the element $(y, (\tau, \tau(a))z)$ is written as $(y, \tau(a)z)$.

We now construct a central sequence $(T_N)$ in the full group of the translation groupoid $G\ltimes (W, \omega)$.
Pick $N\in \N$.
By condition $(\star)$, for some $n=n_N\in \N$, we have $|F_n|/|A_n|\geq 1-1/N$, where $F_n$ is the set of elements of $A_n$ whose order is more than $N$.
Since the dual $\wh{A}_n$ of $A_n$ is isomorphic to $A_n$ (\cite[Corollary 4.8]{f}),
if $E_n$ denotes the set of elements of $\wh{A}_n$ whose order is more than $N$, then $|E_n|/|\wh{A}_n|\geq 1-1/N$.
The set $E_n$ is further $\Gamma$-invariant.
The map $p_n\colon X=\wh{A} \to \wh{A}_n$ induced by the inclusion of $A_n$ into $A$ is surjective (\cite[Corollary 4.42]{f}).
For each $\tau \in E_n$, since its order is more than $N$, there exists $a_\tau \in A_n$ such that
\begin{equation}\label{atau}
0<|\tau(a_\tau)-1|< |\exp(2\pi i/N)-1|.
\end{equation}
We define a map $T_N\colon W\to A$ as follows:
Let $Y_n$ denote the inverse image of the coset $e\Gamma_n$ under the projection from $Y$ onto $\Gamma /\Gamma_n$.
For $y\in gY_n$ with $g\in \Gamma$ and $z\in Z_\tau$ with $\tau \in X$, if $\tau$ belongs to $p_n^{-1}(E_n)$, then we set
\[T_N(y, z)={}^ga_{g^{-1}p_n(\tau)},\]
and otherwise we set $T_N(y, z)=e$.
This is well-defined because $\Gamma_n$ acts on $A_n$ and $\wh{A}_n$ trivially.
The map from $W$ into itself, $w\mapsto (T_Nw)w$, is an automorphism of $W$ because $A$ acts on $Y$ trivially and preserves each fiber $Z_\tau$ with $\tau \in X$.
Thus $T_N$ is an element of the full group $[G\ltimes (W, \omega)]$.

\begin{lem}\label{lem-tn-ape}
With the above notation,
\begin{enumerate}
\item[(i)] for each $N\in \N$ and $g\in G$, we have $\phi_g\circ T_N=T_N\circ \phi_g$, where $\phi_g\colon X\to G$ is the element of the full group $[G\ltimes (W, \omega)]$ given by the constant map with value $g$.
\item[(ii)] For each Borel subset $B\subset W$, we have $\omega(T_N^\circ B\bigtriangleup B)\to 0$ as $N\to \infty$.
\item[(iii)] We define $B_N \subset W$ as the set of periodic points of $T_N^\circ$ whose period is more than $N$.
Then $\omega(B_N)\geq 1-1/N$ for all $N\in \N$.
\end{enumerate}
\end{lem}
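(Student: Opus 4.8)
The plan is to first pin down an explicit description of the transformation $T_N^\circ$ and then read all three assertions off from it. Using the identification $Z\cong X\times\prod_\Gamma\T$ of Subsection \ref{subsec-shift-coind} together with Proposition \ref{prop-borel}(ii), under which the action of $t\in\T$ sends $(\tau,(l_\gamma)_\gamma)$ to $(\tau,(t\,l_\gamma)_\gamma)$, and using that $T_N$ is $A$-valued while $A$ acts inside each fiber through the character $\tau$, one obtains for $w=(y,z)$ with $z\leftrightarrow(\tau,(l_\gamma)_\gamma)$ the formula
\[T_N^\circ(y,\tau,(l_\gamma)_\gamma)=(y,\tau,(t_N(y,\tau)\,l_\gamma)_\gamma),\qquad t_N(y,\tau):=\langle\tau,T_N(w)\rangle\in\T.\]
The structural point on which everything rests is that $t_N(y,\tau)$ depends only on $(y,\tau)$ and \emph{not} on the torus coordinates $(l_\gamma)_\gamma$, so that $T_N^\circ$ is a measurable field of \emph{diagonal} torus rotations. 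Writing the $Y$-coordinate of $w$ as lying in $\gamma Y_n$ and setting $\tau'=\gamma^{-1}p_n(\tau)$, one computes $t_N(y,\tau)=\langle\tau,{}^{\gamma}a_{\tau'}\rangle=\langle\tau',a_{\tau'}\rangle$ when $\tau\in p_n^{-1}(E_n)$ and $t_N(y,\tau)=1$ otherwise; in either case inequality (\ref{atau}) gives $|t_N(y,\tau)-1|<|\exp(2\pi i/N)-1|$.

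For (i), since local sections of $G\ltimes(W,\omega)$ multiply in $G$ by $(\psi\circ\phi)(w)=\psi(\phi^\circ w)\phi(w)$, the identity $\phi_g\circ T_N=T_N\circ\phi_g$ is equivalent to $T_N(g\cdot w)=g\,T_N(w)\,g^{-1}$ for almost every $w$, and because $T_N$ takes values in the abelian normal subgroup $A$ this right-hand side equals ${}^{\gamma_0}T_N(w)$ with $\gamma_0=gA$. I would verify this directly: the base point of $g\cdot w$ is $\gamma_0\tau$, its $Y$-coordinate lies in $\gamma_0\gamma Y_n$, and $\Gamma$-equivariance of $p_n$ together with $\Gamma$-invariance of $E_n$ shows $\tau\in p_n^{-1}(E_n)$ iff $\gamma_0\tau\in p_n^{-1}(E_n)$. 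A one-line computation using $p_n(\gamma_0\tau)=\gamma_0\,p_n(\tau)$ and the well-definedness of the conjugation action $a\mapsto{}^{\gamma}a$ then gives $T_N(g\cdot w)={}^{\gamma_0\gamma}a_{\gamma^{-1}p_n(\tau)}={}^{\gamma_0}T_N(w)$ in both cases.

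For (ii) I would pass to $L^2(W,\omega)$; since $T_N^\circ$ is measure-preserving it suffices to prove $\|\psi\circ T_N^\circ-\psi\|_{L^2}\to0$ on the dense linear span of functions $\psi(y,\tau,(l_\gamma))=F(y,\tau)\chi_m((l_\gamma))$, where $F\in L^2(Y\times X)$ and $\chi_m((l_\gamma))=\prod_\gamma l_\gamma^{m(\gamma)}$ ranges over the characters of $\prod_\Gamma\T$ indexed by finitely supported $m\colon\Gamma\to\Z$. As $T_N^\circ$ multiplies every torus coordinate by $t_N(y,\tau)$, one gets $(\psi\circ T_N^\circ)(w)=F(y,\tau)\,t_N(y,\tau)^{d(m)}\chi_m((l_\gamma))$ with $d(m)=\sum_\gamma m(\gamma)$, whence
\[\|\psi\circ T_N^\circ-\psi\|_{L^2}^2=\int_{Y\times X}|F(y,\tau)|^2\,\bigl|t_N(y,\tau)^{d(m)}-1\bigr|^2\,d(\nu\times\mu).\]
Now $|t_N^{d(m)}-1|\ls|d(m)|\,|t_N-1|<|d(m)|\,|\exp(2\pi i/N)-1|\to0$ for each fixed $m$, giving the claim; taking $\psi=\mathbf{1}_B$ and using $\omega(T_N^\circ B\bigtriangleup B)=\|\mathbf{1}_B\circ T_N^\circ-\mathbf{1}_B\|_{L^1}\ls\|\mathbf{1}_B\circ T_N^\circ-\mathbf{1}_B\|_{L^2}$ yields (ii).

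For (iii), iterating gives $(T_N^\circ)^k(y,\tau,(l_\gamma))=(y,\tau,(t_N(y,\tau)^k l_\gamma))$, so $w$ is periodic exactly when $t_N(y,\tau)$ is a root of unity, and then the period equals the order of $t_N(y,\tau)$. When $\tau\in p_n^{-1}(E_n)$ the value $t_N(y,\tau)=\langle\tau',a_{\tau'}\rangle$ is a \emph{nontrivial} root of unity with $|t_N-1|<|\exp(2\pi i/N)-1|$; a nontrivial root of unity of order $q$ satisfies $|t_N-1|\ge|\exp(2\pi i/q)-1|$, and since $q\mapsto|\exp(2\pi i/q)-1|$ is decreasing this forces $q>N$. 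Hence every point with $\tau\in p_n^{-1}(E_n)$ lies in $B_N$, and as $p_n$ pushes the Haar measure $\mu$ forward to the uniform measure on the finite group $\wh{A}_n$, we conclude $\omega(B_N)\ge\mu(p_n^{-1}(E_n))=|E_n|/|\wh{A}_n|\ge1-1/N$. The main obstacle is the first paragraph: carefully unwinding the co-induction identification to establish that $T_N^\circ$ is genuinely a fiberwise diagonal torus rotation whose angle $t_N(y,\tau)$ is independent of the coordinates $(l_\gamma)$. Once this is in hand, (i)–(iii) reduce to the routine equivariance, Fourier, and root-of-unity computations indicated above.
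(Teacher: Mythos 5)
Your proposal is correct and takes essentially the same approach as the paper: the structural fact you isolate—that $T_N^\circ$ is a fiberwise diagonal rotation by $t_N(y,\tau)=\langle \tau, T_Nw\rangle \in \T$, independent of the torus coordinates and controlled by inequality (\ref{atau})—is exactly the paper's equation (\ref{eq-tncirc}), and your equivariance computation for (i) and your root-of-unity/order argument for (iii) coincide with the paper's. The only difference is cosmetic, in (ii): the paper passes to a Borel fundamental domain for the free $\T$-action on $Z$ and concludes from the fact that rotations uniformly close to $1$ move Borel sets little, whereas you make the same point quantitatively via characters of $\prod_\Gamma\T$ and an $L^2$-density argument; both rest on the identical structural formula.
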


\begin{proof}
To prove assertion (i), we pick $N\in \N$ and $g\in G$.
Let $n=n_N\in \N$ be the integer chosen before to obtain the subset $E_n\subset \wh{A}_n$.
We also pick $y\in hY_n$ with $h\in \Gamma$ and $z\in Z_\tau$ with $\tau \in X$, and set $w=(y, z)$.
If $\tau \in p_n^{-1}(E_n)$, then $(\phi_g\circ T_N)w=g({}^ha_{h^{-1}p_n(\tau)})$ and
\[(T_N\circ \phi_g)w=T_N(\bar{g}y, gz)g=({}^{\bar{g}h}a_{(\bar{g}h)^{-1}p_n(g\tau)}) g=g({}^h a_{h^{-1}p_n(\tau)}),\]
where $\bar{g}$ denotes the image of $g$ in $\Gamma$.
Thus $\phi_g\circ T_N=T_N\circ \phi_g$ at $w$.
If $\tau \not\in p_n^{-1}(E_n)$, then $(\phi_g\circ T_N)w=g$, and $(T_N\circ \phi_g)w=g$ because $g\tau \not\in p_n^{-1}(E_n)$.
Assertion (i) follows.

We prove assertion (ii).
Let the group $\T$ act on $W$ by $t(y, z)=(y, tz)$ for $t\in \T$, $y\in Y$ and $z\in Z$.
Since $\T$ is compact, the action $\T \c W$ is isomorphic to the action $\T \c D\times \T$ given by $t(w, s)=(w, ts)$ for $t, s\in \T$ and $w\in D$, where $D$ is a Borel subset of $W$ which is the product of $Y$ with a Borel fundamental domain for the action $\T \c Z$.

We pick $N\in \N$ and let $n=n_N$.
For $y\in gY_n$ with $g\in \Gamma$ and $z\in Z_\tau$ with $\tau \in X$, if $\tau$ belongs to $p_n^{-1}(E_n)$, then
\begin{equation}\label{eq-tncirc}
T_N^\circ (y, z)=(y, \tau({}^ga_{g^{-1}p_n(\tau)})z)=(y, \langle g^{-1}\tau, a_{g^{-1}p_n(\tau)}\rangle z),
\end{equation}
and otherwise $T_N^\circ (y, z)=(y, z)$.
This shows that for each $y\in Y$ and $\tau \in X$, the map $T_N^\circ$ preserves the set $\{ y\} \times Z_\tau$, and on that set, the map $T_N^\circ$ is equal to the transformation given by some single element of $\T$.
Moreover $\{ y\} \times Z_\tau$ is $\T$-invariant.
Therefore if $T_N^\circ$ is regarded as a automorphism of $D\times \T$ under the isomorphism between $W$ and $D\times \T$, then $T_N^\circ$ preserves each orbit $\{ w\} \times \T$ with $w\in D$, and on that orbit, the map $T_N^\circ$ is equal to the transformation given by some single element of $\T$.
By inequality (\ref{atau}), those elements of $\T$, i.e., the value $\langle g^{-1}\tau, a_{g^{-1}p_n(\tau)}\rangle$ in equation (\ref{eq-tncirc}), are uniformly close to $1$ if $N$ is so large that $\exp(2\pi i/N)$ is close to $1$.
Thus assertion (ii) follows.

We pick $N\in \N$ and let $n=n_N$.
If $y\in gY_n$ with $g\in \Gamma$ and $z\in Z_\tau$ with $\tau \in p_n^{-1}(E_n)$, then the value $\langle g^{-1}\tau, a_{g^{-1}p_n(\tau)}\rangle \in \T$ has order more than $N$ by inequality (\ref{atau}).
Moreover freeness of the action $\T\c Z$, shown in Proposition \ref{prop-borel} (ii), and equation (\ref{eq-tncirc}) imply that $(y, z)$ is a periodic point of $T_N^\circ$ whose period is more than $N$.
Assertion (iii) follows from this together with the inequality $|E_n|/|\wh{A}_n|\geq 1-1/N$.
\end{proof}

We are going to apply Theorem \ref{thm-sch-ape}.
Let us check that the assumption in it is fulfilled for the p.m.p.\ action $G\c (W, \omega)$, the $G$-equivariant measure-preserving map $\pi \colon (W, \omega)\to (X, \mu)$ and the central sequence $(T_N)$ in the full group $[G\ltimes (W, \omega)]$, where we define the map $\pi$ by $\pi(y, z)=\tau$ for $y\in Y$ and $z\in Z_\tau$ with $\tau \in X$.
We first note that $(T_N)$ is indeed central by Lemma \ref{lem-tn-ape} (i) and (ii).
The stabilizer of a point of $W$ in $G$ depends only on its image under $\pi$.
Indeed the action $\calg_{\tsigma}\c (Z, \zeta)$ is essentially free by Proposition \ref{prop-borel} (iv) and thus the stabilizer of almost every $w\in W$ is equal to the kernel of $\pi(w)\in X=\wh{A}$.
As pointed out in the proof of Lemma \ref{lem-tn-ape} (ii), $T_N^\circ$ preserves the set of the form $\{ y\} \times Z_\tau$ with $y\in Y$ and $\tau \in X$ and thus preserves each fiber of $\pi$.
For each $w\in W$, since $A$ is abelian and the kernel of $\pi(w)$ is a subgroup of $A$, the element $T_Nw\in A$ belongs to the centralizer of the stabilizer of $w$ in $G$.
The inequality $\omega(B_N)\geq 1-1/N$ shown in Lemma \ref{lem-tn-ape} (iii) implies that $\omega(\{ \, w\in W\mid T_N^\circ w\neq w\, \})\to 1$ as $N\to \infty$.
By Lemma \ref{lem-tn-ape} (iii) again, for each $p\in \N$, if $B_N^p\subset W$ denotes the set of $p$-periodic points of $T_N^\circ$, then $\omega(B_N^p)\to 0$ as $N\to \infty$.
Thus the assumption in Theorem \ref{thm-sch-ape} is fulfilled, and by the theorem, $G$ has the Schmidt property.


\subsection{The other case}\label{subsec-not-star}

Let $G$ be a countable group and let $A$ be an infinite abelian normal subgroup of $G$ contained in the FC-center of $G$.
Suppose that each finite subset of $A$ has finite normal closure in $G$ and let $A_1<A_2<\cdots$ be a strictly increasing sequence of finite subgroups of $A$ such that each $A_n$ is normalized by $G$.
In this subsection, we suppose that condition $(\star)$ in Subsection \ref{subsec-outline} does not hold for this sequence and then construct a free p.m.p.\ Schmidt action of $G$.
By Lemma \ref{lem-not-star-bm}, we may assume without loss of generality that there exists a prime number $p$ such that each $A_n$ is isomorphic to the direct sum of finitely many copies of $\Z /p \Z$.
We may also assume that $A=\bigcup_n A_n$ and that $G/A$ is infinite as in the previous subsection.

We set $\Gamma =G/A$ and choose a section $\sfs \colon \Gamma \to G$ of the quotient map with $\sfs(e)=e$.
We then obtain the 2-cocycle $\sigma \colon \Gamma \times \Gamma \to A$.
We define $X$ as the group of homomorphisms from $A$ into the direct product $L\coloneqq \prod_\N \Z /p\Z$, while $X$ denoted the dual group $\wh{A}$ of $A$ in the previous subsection.
Let $\mu$ be the normalized Haar measure on $X$.
Note that if we fix an embedding of $\Z /p\Z$ into the torus $\T$, then the dual $\wh{A}$ is identified with the group of homomorphisms from $A$ into $\Z /p\Z$ since all elements of $A=\bigcup_n A_n$ except for the trivial one have order $p$.
Under this identification, we often identify $X$ with the product group $\prod_\N \wh{A}$ unless there is cause of confusion.

We recall the construction in Subsection \ref{subsec-ext}.
Define the action of $G$ on $X$ by $(g\tau)(a)=\tau(g^{-1}ag)$ for $g\in G$, $a\in A$ and $\tau \in X$, which induces the action of $\Gamma$ on $X$.
Let $\calu =X\times L$ be the bundle over $X$, which is a groupoid with unit space $X$.
Then we obtain the 2-cocycle $\tsigma \colon (X\rtimes \Gamma)^{(2)}\to \calu$ by formula (\ref{tsigma}) and obtain the groupoid extension
\[1\to \calu \to \calg_{\tsigma}\to X\rtimes \Gamma \to 1\]
together with the homomorphism $\eta \colon X\rtimes G\to \calg_{\tsigma}$ such that
\[\ker \eta = \{ \, (\tau, a)\in X\rtimes A\mid a\in \ker \tau \, \}\]
and $\eta(\tau, a)=(\tau, \tau(a))\in \calu$ for all $a\in A$ and $\tau \in X$.

\begin{lem}\label{lem-ker-tau}
With the above notation,
\begin{enumerate}
\item[(i)] for each $N\in \N$, the set of points $\tau =(\tau_i)_{i\in \N}\in X$ such that $\bigcap_{i=1}^N\ker \tau_i=\ker \tau$ is $\mu$-null.
\item[(ii)] For $\mu$-almost every $\tau \in X$, we have $\ker \tau =\{ e\}$.
Therefore the groupoid $X\rtimes A$ embeds into $\calu$ via $\eta$ if it is restricted to some $\mu$-conull subset of $X$.
\end{enumerate}
\end{lem}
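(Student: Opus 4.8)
The plan is to reduce both assertions to an elementary independence computation coming from the product structure of $(X,\mu)$. First I would record that, since every non-trivial element of $A=\bigcup_n A_n$ has order $p$, a homomorphism from $A$ into $L=\prod_\N \Z/p\Z$ is precisely a sequence of homomorphisms from $A$ into $\Z/p\Z$; writing $\tau=(\tau_i)_{i\in \N}$ accordingly identifies $X$ with $\prod_\N \wh{A}$ and, what matters most, identifies $\mu$ with the product of countably many copies of the normalized Haar measure on $\wh{A}$. Under this identification the coordinate maps $\tau \mapsto \tau_i$ are mutually independent and Haar-distributed, and $\ker \tau =\bigcap_{i\in \N}\ker \tau_i$. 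The single fact underlying everything is that for a fixed $a\in A\setminus \{ e\}$ the evaluation $\wh{A}\to \Z/p\Z$, $\chi \mapsto \chi(a)$, is a surjective homomorphism, hence pushes Haar measure forward to the uniform measure on $\Z/p\Z$; thus $\mu(\{ \, \tau \mid \tau_i(a)=0\, \})=1/p$, independently over $i$.

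For assertion (ii) this already suffices. I would write $\{ \, \tau \mid \ker \tau \neq \{ e\} \, \}=\bigcup_{a\in A\setminus \{ e\}}\{ \, \tau \mid \tau_i(a)=0\ \text{for all}\ i\, \}$, note that by independence each set in this countable union has measure $\prod_{i\in \N}p^{-1}=0$, and conclude by countable subadditivity. The embedding claim then follows at once from the already-established identity $\ker \eta =\{ \, (\tau, a)\in X\rtimes A\mid a\in \ker \tau \, \}$: over the conull set where $\ker \tau =\{ e\}$ the homomorphism $\eta$ is injective on $X\rtimes A$.

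Assertion (i) is where I expect the only genuine subtlety. The natural element to track is a non-trivial $a$ in $K_N\coloneqq \bigcap_{i=1}^N\ker \tau_i$, but $K_N$, and hence $a$, depends on $\tau_1,\ldots, \tau_N$, so I cannot fix $a$ in advance and must condition instead. The steps: observe that $K_N$ is the kernel of $(\tau_1,\ldots, \tau_N)\colon A\to (\Z/p\Z)^N$, hence of index at most $p^N$ in the infinite group $A$ and thus infinite; choose a non-trivial $a\in K_N$ Borel-measurably in $\tau_1,\ldots, \tau_N$; and note that on the event $\{ K_N=\ker \tau \}$ one has $a\in \ker \tau$, forcing $\tau_i(a)=0$ for every $i>N$. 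Conditioned on $\tau_1,\ldots, \tau_N$ the events $\{ \tau_i(a)=0\}_{i>N}$ are independent of probability $1/p$, so their simultaneous occurrence has conditional probability $\prod_{i>N}p^{-1}=0$; integrating over $\tau_1,\ldots, \tau_N$ yields that the set in (i) is $\mu$-null. I would stress that (i) is not a formal consequence of (ii): the decreasing sequence $(K_N)$ can fail to stabilise in finitely many steps while still having non-trivial intersection, so the two assertions really are proved separately, (i) by the conditioning argument and (ii) by the direct subadditivity estimate.
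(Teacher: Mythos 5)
Your proof is correct and follows essentially the same route as the paper's: both assertions are reduced, via the identification of $(X,\mu)$ with $\prod_\N \wh{A}$ carrying product Haar measure, to the fact that $\{\, \xi \in \wh{A}\mid \xi(a)=0\, \}$ has measure $1/p$ for non-trivial $a$, with (ii) proved by countable subadditivity over $a\in A\setminus \{ e\}$ and (i) by Fubini (your conditioning on $\tau_1,\ldots,\tau_N$) together with a choice of non-trivial element of the intersection $K_N=\bigcap_{i=1}^N\ker \tau_i$, which is infinite because it has index at most $p^N$ in the infinite group $A$. One correction to your closing remark: assertion (i) \emph{is} a formal consequence of (ii) combined with your own observation that $K_N$ is always infinite, since on the conull set where $\ker \tau =\{ e\}$ one can never have $K_N=\ker \tau$; this does not affect the validity of your argument, as you prove (i) directly anyway.
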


\begin{proof}
The set in assertion (i) is written as
\begin{equation}\label{tautau}
\bigsqcup_{\tau_1,\ldots, \tau_N\in \wh{A}}\{ \tau_1\} \times \cdots \times \{ \tau_N\} \times \prod_{i=N+1}^\infty \{ \, \xi \in \wh{A}\mid \textstyle{\bigcap_{j=1}^N\ker \tau_j}<\ker \xi \, \}.
\end{equation}
We note that if $a$ is a non-trivial element of $A$, then the subgroup $\{ \, \xi \in \wh{A}\mid a\in \ker \xi\, \}$ is of index $p$ in $\wh{A}$ and thus has measure $1/p$, where $\wh{A}$ is equipped with the normalized Haar measure.
Then for each $\tau_1,\ldots, \tau_N\in \wh{A}$, the set $\{ \, \xi \in \wh{A} \mid \bigcap_{i=1}^N\ker \tau_i<\ker \xi \, \}$ has measure at most $1/p$ because this is contained in the set $\{ \, \xi \in \wh{A}\mid a\in \ker \xi \, \}$ if $a$ is chosen to be a non-trivial element of $\bigcap_{i=1}^N\ker \tau_i$.
By Fubini's theorem, the set in (\ref{tautau}) is $\mu$-null.

For each non-trivial $a\in A$, the set $\{ \, \tau \in X\mid a \in \ker \tau \, \}$ is identified with the product set $\prod_\N \{ \, \xi \in \wh{A}\mid a\in \ker \xi \, \}$ and hence $\mu$-null.
Assertion (ii) follows.
\end{proof}

Let $\calg_{\tsigma}\c (Z, \zeta)$ be the free p.m.p.\ action constructed in Subsection \ref{subsec-shift-coind}, i.e., the action co-induced from the shift action of $\calu$ on itself.
The space $Z$ is fibered over $X$.
The fiber at $\tau \in X$ is denoted by $Z_\tau$.
For $n\in \N$, let $\Gamma_n$ be the group of elements of $\Gamma$ acting on $A_n$ trivially.
Let $\Gamma \c (Y, \nu)$ be the profinite p.m.p.\ action associated with the system of the p.m.p.\ action $\Gamma \c \Gamma /\Gamma_n$ given by left multiplication.
As with the previous subsection, let $\calg_{\tsigma}$ act on $Y\times Z$ diagonally, where $Y\times Z$ is fibered over $X$ with respect to the map sending each element of $Y\times Z_\tau$ to $\tau$ for each $\tau \in X$.
Through the homomorphism $\eta \colon X\rtimes G\to \calg_{\tsigma}$, we obtain the p.m.p.\ action of $G$ on the product space $(W, \omega)\coloneqq (Y\times Z, \nu \times \zeta)$.
We note that the action $G\c (W, \omega)$ is essentially free because the action $\calg_{\tsigma}\c (Z, \zeta)$ is essentially free by Proposition \ref{prop-borel} (iv) and $\ker \eta$ is trivial in the sense of Lemma \ref{lem-ker-tau} (ii).

We now construct a central sequence $(T_N)$ in the full group of the translation groupoid $G\ltimes (W, \omega)$.
Pick $N\in \N$.
For each $a\in A$, we set
\[X_a=\{ \, \tau=(\tau_i)_{i\in \N}\in X\mid \tau_1(a)=\cdots =\tau_N(a)=0,\, \tau(a)\neq 0\, \}.\]
By Lemma \ref{lem-ker-tau} (i), $X=\bigcup_{a\in A} X_a$ up to null sets.
Let $Y_n$ denote the inverse image of the coset $e\Gamma_n$ under the projection from $Y$ onto $\Gamma /\Gamma_n$.
Then
\[X\times Y=\bigcup_{n=1}^\infty \bigcup_{a\in A_n\setminus A_{n-1}}\bigcup_{g\Gamma_n \in \Gamma /\Gamma_n}X_a\times gY_n,\]
where we set $A_0=\{ e\}$.
If $a\in A_n\setminus A_{n-1}$ and $g, h\in \Gamma$, then $h(X_a\times gY_n)=X_{h\cdot a}\times hgY_n$ with respect to the diagonal action $\Gamma \c X\times Y$, where the dot stands for the action of $\Gamma$ on $A$.
Thus the saturation $\Gamma (X_a\times gY_n)$ is the disjoint union of the translates $h(X_a\times gY_n)$ with $h$ running through representatives of elements of $\Gamma /\Gamma_n$.
Let us call such a subset a $(\Gamma /\Gamma_n)$-base, that is, call a Borel subset $B\subset X\times Y$ a $(\Gamma /\Gamma_n)$-\textit{base} if $B$ is $\Gamma_n$-invariant and the saturation $\Gamma B$ is the disjoint union of the translates $hB$ with $h$ running through representatives of elements of $\Gamma /\Gamma_n$.

\begin{lem}\label{lem-bm}
With the above notation, there exist Borel subsets of $X$, $B_1, B_2,\ldots$, such that $X\times Y=\bigsqcup_{m=1}^\infty \Gamma B_m$ and each $B_m$ is a $(\Gamma /\Gamma_n)$-base  contained in $X_a\times gY_n$ for some $n\in \N$, $a\in A_n\setminus A_{n-1}$ and $g\in \Gamma$.
\end{lem}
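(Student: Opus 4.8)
The plan is to disjointify, at the level of $\Gamma$-saturations, the covering
$X\times Y=\bigcup_n\bigcup_{a\in A_n\setminus A_{n-1}}\bigcup_{g\Gamma_n\in \Gamma/\Gamma_n}X_a\times gY_n$
displayed just before the lemma, so that each resulting $\Gamma$-saturated piece is the saturation of a single base sitting inside one atom $X_a\times gY_n$.

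First I would record that each atom $X_a\times gY_n$ with $a\in A_n\setminus A_{n-1}$ is itself a $(\Gamma/\Gamma_n)$-base, and that there are only countably many atoms. For the base property, note that $\Gamma_n$ is normal in $\Gamma$, being the kernel of the action $\Gamma\to \aut(A_n)$, that $\Gamma/\Gamma_n$ is finite (it embeds in $\aut(A_n)$), and that $\Gamma_n$ fixes $A_n$ pointwise; hence for $h\in \Gamma_n$ the diagonal translate $h(X_a\times gY_n)=X_{h\cdot a}\times hgY_n$ equals $X_a\times gY_n$, since $h\cdot a=a$ and $hg\Gamma_n=g\Gamma_n$. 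For disjointness of the translates, the coordinate $hgY_n$ is the fiber of the $\Gamma$-equivariant map $Y\to \Gamma/\Gamma_n$ over $hg\Gamma_n=h\Gamma_n g$, and normality of $\Gamma_n$ makes distinct cosets $h\Gamma_n$ give distinct values $h\Gamma_n g$, so the translates are pairwise disjoint. Countability of the family of atoms follows since there are countably many $n$, finitely many $a\in A_n\setminus A_{n-1}$, and finitely many cosets $g\Gamma_n$. Finally, the displayed covering is valid up to null sets, using $X=\bigcup_{a\in A}X_a$ (Lemma \ref{lem-ker-tau}(i)), the partition $A=\bigsqcup_n(A_n\setminus A_{n-1})$, and $Y=\bigsqcup_{g\Gamma_n}gY_n$.

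Next I would enumerate the atoms as $P_1,P_2,\ldots$, writing $P_k=X_{a_k}\times g_kY_{n_k}$, and form the $\Gamma$-invariant Borel sets $T_k=\Gamma P_k\setminus \bigcup_{j<k}\Gamma P_j$. These are pairwise disjoint, each $\Gamma$-invariant, and satisfy $\bigsqcup_k T_k=\bigcup_k\Gamma P_k=X\times Y$ up to null sets. I would then set $B_k=T_k\cap P_k$ and verify $\Gamma B_k=T_k$ together with the base property. Since $P_k$ is a $(\Gamma/\Gamma_{n_k})$-base we have $\Gamma P_k=\bigsqcup_{h\Gamma_{n_k}}hP_k$, and since $T_k$ is $\Gamma$-invariant with $T_k\subset \Gamma P_k$, intersecting yields $T_k=\bigsqcup_{h\Gamma_{n_k}}(T_k\cap hP_k)=\bigsqcup_{h\Gamma_{n_k}}h(T_k\cap P_k)=\bigsqcup_{h\Gamma_{n_k}}hB_k$; thus $\Gamma B_k=T_k$ and the translates $hB_k$ are disjoint over cosets, inheriting disjointness from the $hP_k$. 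Moreover $B_k$ is $\Gamma_{n_k}$-invariant, being the intersection of the $\Gamma$-invariant set $T_k$ with the $\Gamma_{n_k}$-invariant base $P_k$, so $B_k$ is a $(\Gamma/\Gamma_{n_k})$-base contained in $X_{a_k}\times g_kY_{n_k}$. Discarding the empty $B_k$ and relabeling gives the desired sequence, with $X\times Y=\bigsqcup_k\Gamma B_k$.

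The only genuinely delicate point is the identity $\Gamma(T_k\cap P_k)=T_k$: it rests on the observation that intersecting a $\Gamma$-saturated set with a base selects exactly one representative per coset, so that $\Gamma$-invariance of $T_k$ permits spreading the intersection back out as a disjoint union of translates. Everything else is routine bookkeeping, and Borel measurability is automatic throughout since all the sets arise from countable Boolean operations and $\Gamma$-saturations of Borel bases.
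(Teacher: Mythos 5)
Your proof is correct and follows essentially the same route as the paper: enumerate the countably many atoms $X_a\times gY_n$ (each a $(\Gamma/\Gamma_n)$-base, by normality of $\Gamma_n$ and $\Gamma_n$-invariance of $X_a$ and $gY_n$) and disjointify by removing the $\Gamma$-saturations of the earlier pieces; indeed your $B_k=P_k\setminus\bigcup_{j<k}\Gamma P_j$ coincides with the paper's inductively defined $B_m=E_m\setminus\bigcup_{i<m}\Gamma B_i$, since $\bigcup_{i<m}\Gamma B_i=\bigcup_{i<m}\Gamma E_i$. The extra verifications you supply (that atoms are bases, and that subtracting a $\Gamma$-invariant set preserves the base property) are exactly the facts the paper establishes before the lemma or leaves implicit.
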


\begin{proof}
For each $n\in \N$, let $D(n, 1), D(n, 2),\ldots, D(n, k_n)$ be an enumeration of the $(\Gamma /\Gamma_n)$-bases $X_a\times gY_n$ indexed by $a\in A_n\setminus A_{n-1}$ and a representative $g$ of an element of $\Gamma /\Gamma_n$, with $k_n=|A_n\setminus A_{n-1}|\, |\Gamma /\Gamma_n|$.
Let $(E_m)_{m\in \N}$ be the enumeration of the sets $D(n, k)$ with respect to the lexicographic order of the indices $(n, k)$.

We inductively define a Borel subset $B_m\subset X\times Y$.
We set $B_1=E_1$.
Suppose that $B_1,\ldots, B_{m-1}$ are defined.
We set $B_m=E_m\setminus \bigcup_{i=1}^{m-1}\Gamma B_i$.
Then $E_m=D(n, k)$ for some $n$ and $k$ and thus $B_m$ is a $(\Gamma /\Gamma_n)$-base.
By construction $\Gamma B_m$ and $\Gamma B_l$ are disjoint for all distinct $m$, $l$.
Since the sets $E_m$ cover $X\times Y$, the sets $\Gamma B_m$ cover $X\times Y$.
\end{proof}

We define a map $T_N\colon W\to A$ as follows:
Let $q\colon W\to X\times Y$ be the projection that sends a point $(y, z)\in W$ with $z\in Z_\tau$ and $\tau \in X$ to the point $(\tau, y)$.
By Lemma \ref{lem-bm}, the set $X\times Y$ is covered by the mutually disjoint sets $\Gamma B_m$ with $m\in \N$.
For each $m\in \N$, we have $n_m\in \N$, $a_m\in A_{n_m}\setminus A_{n_m-1}$ and $g_m\in \Gamma$ such that the set $B_m$ is a $(\Gamma /\Gamma_{n_m})$-base contained in $X_{a_m}\times g_mY_{n_m}$.
For $w\in q^{-1}(hB_m)$ with $h\in \Gamma$, we set
\[T_Nw=h\cdot a_m.\]
This is well-defined because $B_m$ is a $(\Gamma /\Gamma_{n_m})$-base and $a_m$ is fixed by $\Gamma_{n_m}$.
The map from $W$ into itself, $w\mapsto (T_Nw)w$, is an automorphism of $W$ because $A$ preserves each fiber of $q$.
Thus $T_N$ is an element of the full group $[G\ltimes (W, \omega)]$.

\begin{lem}\label{lem-tn-2}
With the above notation,
\begin{enumerate}
\item[(i)] for every $N\in \N$ and $g\in G$, we have $\phi_g\circ T_N=T_N\circ \phi_g$, where $\phi_g\colon X\to G$ is the element of the full group $[G\ltimes (W, \omega)]$ given by the constant map with value $g$.
\item[(ii)] For every Borel subset $B\subset W$, we have $\omega(T_N^\circ B\bigtriangleup B)\to 0$ as $N\to \infty$.
\item[(iii)] For every $N\in \N$ and every $w\in W$, we have $T_N^\circ w\neq w$.
\end{enumerate}
\end{lem}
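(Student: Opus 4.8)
The plan is to deduce all three statements from an explicit description of $T_N^\circ$ as a fibrewise translation coming from the free compact group action $L\c Z$ of Proposition~\ref{prop-borel}~(ii). First I would record the governing formula: writing $w=(y,z)\in W$ with $z\in Z_\tau$ and putting $q(w)=(\tau,y)$, if $q(w)\in hB_m$ for some $h\in\Gamma$ then $T_Nw=h\cdot a_m\in A$, and since the action of $A$ factors through $\eta(\tau,h\cdot a_m)=(\tau,\tau(h\cdot a_m))\in\calu$, we obtain
\[T_N^\circ w=\tau(h\cdot a_m)\cdot w,\]
the translate of $w$ by $\ell_N(w):=\tau(h\cdot a_m)\in L$ under the $L$-action. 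Because $q(w)\in hB_m\subset X_{h\cdot a_m}\times hg_mY_{n_m}$, the definition of $X_a$ forces $\tau_i(h\cdot a_m)=0$ for $1\le i\le N$ and $\tau(h\cdot a_m)\neq 0$; equivalently $\ell_N(w)\in L_N\setminus\{e\}$, where $L_N:=\{\,l\in L\mid l_1=\cdots=l_N=0\,\}$. Assertion (iii) is then immediate: $\ell_N(w)\neq e$ and the action $L\c Z$ is free, so $T_N^\circ w=\ell_N(w)\cdot w\neq w$.

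For (i), I would exploit that $q\colon W\to X\times Y$ is $G$-equivariant for the $\Gamma$-action on $X\times Y$, so $q(gw)=\bar g\,q(w)$ with $\bar g=gA$. Hence $q(w)\in hB_m$ gives $q(gw)\in \bar gh B_m$, whence $T_N(gw)=(\bar gh)\cdot a_m$. Since $A$ is abelian and normal, conjugation by $g$ implements the $\Gamma$-action on $A$, so
\[gT_N(w)g^{-1}=g(h\cdot a_m)g^{-1}=\bar g\cdot(h\cdot a_m)=(\bar gh)\cdot a_m=T_N(gw).\]
Unwinding the groupoid composition, $(\phi_g\circ T_N)(w)=g\,T_N(w)$ and $(T_N\circ\phi_g)(w)=T_N(gw)\,g$, so the identity $T_N(gw)=gT_N(w)g^{-1}$ is exactly $\phi_g\circ T_N=T_N\circ\phi_g$.

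For (ii) the point is to control the $w$-dependence of $\ell_N$ uniformly, which I expect to be the main obstacle. Since $L\c Z$ is free and p.m.p.\ (Proposition~\ref{prop-borel}~(ii)), I would identify $(W,\omega)$ with $(D\times L,\lambda\times\mathrm{Haar})$, where $D=W/L$ and $L$ acts by translation on the second factor. As $\ell_N(w)$ depends only on $q(w)$ and $q$ is $L$-invariant, $\ell_N$ descends to a Borel map $\ell_N\colon D\to L_N$, and $T_N^\circ(d,s)=(d,\ell_N(d)s)$. Given a Borel set $B\subset W$ and $\ve>0$, I would approximate $B$ within $\ve$ by a finite union $B'$ of rectangles $D_j\times C_j$ in which each $C_j\subset L$ depends only on the first $M$ coordinates of $L=\prod_\N\Z/p\Z$. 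For $N\ge M$ one has $\ell_N(d)\in L_N\subseteq L_M$, and translation by an element of $L_M$ leaves the first $M$ coordinates unchanged, hence fixes each cylinder $C_j$; therefore $T_N^\circ B'=B'$. As $T_N^\circ$ preserves $\omega$, this gives $\omega(T_N^\circ B\bigtriangleup B)\le 2\ve$ for all $N\ge M$, proving (ii). The discrete product structure of $L$ is exactly what converts the $w$-wise "small translation" into an honest fixing of a dense algebra of sets, sidestepping any need for a uniform modulus of continuity.
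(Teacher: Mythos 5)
Your proposal is correct and takes essentially the same route as the paper's proof: you recognize $T_N^\circ$ as a fibrewise translation by the element $\ell_N(w)=\tau(h\cdot a_m)\in L_N\setminus\{e\}$ under the free $L$-action, use the $\Gamma$-equivariance of $q$ and the conjugation formula $T_N(gw)=gT_N(w)g^{-1}$ for (i), the fundamental-domain identification $(W,\omega)\cong (D\times L,\lambda\times\mathrm{Haar})$ for (ii), and freeness of $L\curvearrowright Z$ for (iii), exactly as in the paper. The only difference is cosmetic: in (ii) the paper delegates the final step to the argument of Lemma \ref{lem-tn-ape} (ii) (elements uniformly close to the identity of a compact group act weakly close to the identity), whereas you spell it out via cylinder sets that are literally fixed by translation by $L_M$ — the same idea, written out in the profinite setting.
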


\begin{proof}
We prove assertion (i).
If $w\in q^{-1}(hB_m)$ with $h\in \Gamma$, then we have $(T_N\circ \phi_g)w=T_N(gw)g=((\bar{g}h)\cdot a_m)g$ with $\bar{g}$ the image of $g$ in $\Gamma$, and also have $(\phi_g\circ T_N)w=g(h\cdot a_m)$.
These two coincide.

We prove assertion (ii).
The proof is similar to that of Lemma \ref{lem-tn-ape} (ii).
Using the action of $\calu$ on $Z$, which restricts the action of $\calg_{\tsigma}$, we define an action of $L$ on $Z$ by $lf=(\tau, l)f$ for $l\in L$ and $f\in Z_\tau$ with $\tau \in X$.
This is the action defined in Proposition \ref{prop-borel} (ii).
Let $L$ act on $W$ by $l(y, z)=(y, lz)$ for $l\in L$, $y\in Y$ and $z\in Z$.

Fix $N\in \N$.
Recall that the group $A$ acts on $W$ via the homomorphism $\eta \colon X\rtimes G\to \calg_{\tsigma}$, which satisfies $\eta(\tau, a)=(\tau, \tau(a))$ for all $\tau \in X$ and $a\in A$.
Hence if $w=(y, z)\in q^{-1}(hB_m)$ with $z\in Z_\tau$, $\tau =(\tau_i)_{i\in \N}\in X$ and $h\in \Gamma$, then
\[T_N^\circ w=(y, \langle \tau, T_Nw\rangle z)=(y, \tau(h\cdot a_m) z).\]
Since $q(w)=(\tau, y)\in hB_m$, we have $\tau \in X_{h\cdot a_m}$ and thus $\tau_1(h\cdot a_m)=\cdots =\tau_N(h\cdot a_m)=0$ and $\tau(h\cdot a_m)\neq 0$.
This says that the element $\tau(h\cdot a_m)\in L=\prod_\N \Z /p\Z$ is non-trivial and is close to the identity if $N$ is large.
The definition of $T_Nw$ depends only on $q(w)$, and the action of $L$ on $W$ preserves each fiber of $q$.
Hence on each $L$-orbit in $W$, the map $T_N^\circ$ is equal to the transformation given by some single element of $L$.
Assertion (ii) then follows from the existence of a Borel fundamental domain for the action $L\c Z$ as well as in the proof of Lemma \ref{lem-tn-ape} (ii).

Assertion (iii) follows from the condition $\tau(h\cdot a_m)\neq 0$ shown above and freeness of the action of $L$ on $Z$ shown in Proposition \ref{prop-borel} (ii).
\end{proof}

Therefore the groupoid $G\ltimes (W, \omega)$ is Schmidt, and so is its almost every ergodic component by Lemma \ref{lem-erg-dec}.
We have already shown that the action $G\c (W, \omega)$ is essentially free, in the paragraph after Lemma \ref{lem-ker-tau}.
Thus $G$ has the Schmidt property.


\section{Another construction using ultraproducts}\label{sec-td}

Let $G$ be a countable group with infinite FC-center.
We construct a free p.m.p.\ Schmidt action of $G$ by way of ultraproducts.
This construction is self-contained and independent of the construction given so far.


\medskip


\noindent \textbf{Step 1. Setting up the sequence of actions:} Let $A$ denote the FC-center of $G$.
Then $A$ has an infinite abelian subgroup $B$, which is found as follows:
First, pick a non-trivial $a_1\in A$.
If $\langle a_1\rangle$ is infinite, let $B=\langle a_1\rangle$.
Otherwise pick an element $a_2$ of the set $C_A(a_1)\setminus \langle a_1\rangle$, which is non-empty because $C_A(a_1)$ is of finite index in $A$ and hence infinite.
If $\langle a_1, a_2\rangle$ is infinite, let $B=\langle a_1, a_2\rangle$.
Otherwise pick an element $a_3$ of the set $C_A(a_1, a_2)\setminus \langle a_1, a_2\rangle$, which is non-empty by the same reason.
Repeat this procedure.
Then either it stops in finite steps and the group $B=\langle a_1,\ldots, a_n\rangle$ for some $n$ is infinite and abelian, or it does not stop and the group $B=\langle a_1, a_2,\ldots \rangle$ is infinite and abelian.

We may write $B$ as an increasing union of finitely generated subgroups $B=\bigcup _{n\in \N} B_n$.
Let $G_n \coloneqq C_G(B_n)$, so that $G_n$ is a finite index subgroup of $G$ which contains $B$.
Since $B$ is abelian, we may find a free ergodic compact action $B\curvearrowright ^{\beta} (Y,\mu _Y )$ of $B$, where $Y$ is a compact abelian metrizable group and $\beta \colon B\rightarrow Y$ is an injective homomorphism with dense image, and $B$ is acting on $Y$ by left translation via $\beta$.
Let $G_n \curvearrowright ^{\beta _n} (Y, \mu _Y)^{G_n/B}$ be the p.m.p.\ action co-induced from the action $\beta$ of $B$.
Explicitly, this is defined as follows:
We pick a section $t_n \colon G_n/B \rightarrow G_n$ of the projection map $G_n\rightarrow G_n/B$ with $t_n(eB)=e$, and we let $w _n \colon G_n\times G_n/B\rightarrow B$ be the associated cocycle for the action $G_n\curvearrowright G_n/B$ given by $w _n(g,hB ) \coloneqq t_n(ghB)^{-1}gt_n(hB)$ for $g,h\in G_n$.
Then the action $G_n\curvearrowright ^{\beta _n} Y^{G_n/B}$ is given by
\[
(\beta _n (g)x) (hB ) \coloneqq \beta (w_n(g,g^{-1}hB))x(g^{-1}hB)
\]
for $g,h\in G_n$.
For each $n$, pick a section $s_n \colon G/G_n \rightarrow G$ of the projection map $G\rightarrow G/G_n$ with $s_n(eG_n)=e$, and let $v_n \colon G\times G/G_n \rightarrow G_n$ be the associated cocycle for the p.m.p.\ action $G\curvearrowright (G/G_n , \mu _{G/G_n})$ (where $\mu _{G/G_n}$ is the normalized counting measure), given by $v_n(g,hG_n) \coloneqq s_n(ghG_n)^{-1}gs_n(hG_n)$ for $g, h\in G$.
Then we equip $Z_n\coloneqq G/G_n \times Y^{G_n/B}$ with the product measure $\eta _n \coloneqq \mu _{G/G_n}\times \mu_Y^{G_n/B}$ and we let $G\curvearrowright ^{\alpha _n} (Z_n, \eta _n)$ be the skew product action, which is the p.m.p.\ action defined by
\[
\alpha _n (g)(kG_n , x ) \coloneqq (gkG_n,\beta _n (v_n(g,kG_n))x)
\]
for $g\in G$ and $(kG_n,x)\in Z_n$.

\medskip

\noindent \textbf{Step 2. The ultraproduct and its quotients:}
Fix a non-principal ultrafilter $\mathcal{V}$ on $\N$ and let $G\curvearrowright ^{\alpha} (Z_{\mathcal{V}},\eta _{\mathcal{V}} )$ be the ultraproduct of the sequence of actions $(G\curvearrowright ^{\alpha _n}(Z_n, \eta _n) )_{n\in \N}$ with respect to $\mathcal{V}$.
Thus $Z_{\mathcal{V}}= (\prod _n Z_n )/\! \sim _{\mathcal{V}}$, where $\sim _{\mathcal{V}}$ is the equivalence relation on $\prod _n Z_n$ such that $(y_n)\sim _{\mathcal{V}} (z_n)$ if and only if $\{ \, n\in \N \mid y_n= z_n \, \} \in \mathcal{V}$; we write $[(z_n)]_{\mathcal{V}}$ for the equivalence class of the sequence $(z_n)$.
For a sequence $(D_n)$ of Borel sets $D_n\subset Z_n$, let $[(D_n)]_{\mathcal{V}}$ be the associated basic measurable subset of $Z_{\mathcal{V}}$, i.e.,
\[
[(D_n)]_{\mathcal{V}} = \{ \, [(z_n)]_{\mathcal{V}} \mid \lim _{n\rightarrow \mathcal{V}}1_{D_n}(z_n) = 1 \, \},
\]
where $1_{D_n}$ is the indicator function of $D_n$.
The assignment $[(D_n)]_{\mathcal{V}} \mapsto \lim _{n\rightarrow \mathcal{V}} \eta _n (D_n)$ defines a premeasure on the algebra of all such basic measurable sets, and hence this assignment extends uniquely to a countably additive measure $\eta _{\mathcal{V}}$ on the completion $\mathcal{B}_{\mathcal{V}}$ of the sigma algebra generated by the basic measurable sets.
This is how the measure $\eta _{\mathcal{V}}$ is defined.
The action $\alpha$, of $G$ on $Z_{\mathcal{V}}$, is given by $\alpha (g)[(z_n)]_{\mathcal{V}} \coloneqq [(gz_n)]_{\mathcal{V}}$.

Likewise, let $G\curvearrowright (X_{\mathcal{V}} , \mu _{\mathcal{V}})$ denote the ultraproduct, with respect to $\mathcal{V}$, of the sequence of actions $(G\curvearrowright (G/G_n , \mu _{G/G_n}))_{n\in \N}$. Then the projection map $p\colon (Z_{\mathcal{V}},\eta _{\mathcal{V}} )\rightarrow (X_{\mathcal{V}} , \mu _{\mathcal{V}})$, $[(k_nG_n,x_n)]_{\mathcal{V}}\mapsto [(k_nG_n)]_{\mathcal{V}}$, is measure-preserving and $G$-equivariant.

Let $G\curvearrowright (P,\mu _P)$ denote the profinite action that is the inverse limit of the finite actions $G\curvearrowright G/G_n$.
Elements of $P$ consist of sequences $(g_mG_m)$ with $g_mG_m\supset g_{m+1}G_{m+1}$ for all $m$.
For each $[(k_nG_n)]_{\mathcal{V}}\in X_{\mathcal{V}}$ and each $m\in \N$, let $\Phi _m [(k_nG_n)]_{\mathcal{V}}$ be the unique left coset $gG_m$ of $G_m$ for which the set $\{ \, n \in \N  \mid  k_nG_n\subset gG_m \, \}$ belongs to $\mathcal{V}$.
Then each $\Phi _m \colon X_{\mathcal{V}}\rightarrow G/G_m$ is $G$-equivariant and measure-preserving, and $\Phi _m[(k_nG_n)]_{\mathcal{V}}\supset \Phi _{m+1}[(k_nG_n)]_{\mathcal{V}}$, so we obtain the measure-preserving $G$-equivariant map $\Phi \colon (X_{\mathcal{V}},\mu _{\mathcal{V}} )\rightarrow (P,\mu _P)$ given by $\Phi [(k_nG_n)]_{\mathcal{V}} = (\Phi _m[(k_nG_n )]_{\mathcal{V}})_{m}$.

For each $n$, let $\pi _n \colon Z_n\rightarrow Y$ be the map $\pi _n (kG_n, x)\coloneqq x(eB )$ projecting to the identity-coset coordinate of $x\in Y^{G_n/B}$. Let $\pi \colon Z_{\mathcal{V}}\rightarrow Y$ be defined by
\[
\pi [(k_nG_n,x_n)]_{\mathcal{V}}\coloneqq \lim _{n\rightarrow \mathcal{V}}\pi _n (k_nG_n,x_n ) = \lim _{n\rightarrow \mathcal{V}} x_n(eB)
\]
(note that this limit exists since $Y$ is compact). By \cite[Proposition 8.4]{bowen2018space}, this map is measurable and measure-preserving, with $\eta _{\mathcal{V}} (\pi ^{-1}(E)\bigtriangleup [(\pi _n^{-1}(E))]_{\mathcal{V}}) = 0$ for every Borel subset $E$ of $Y$.  
Let $\mathcal{Y}$ denote the subalgebra of $\mathcal{B}_{\mathcal{V}}$ consisting of all sets of the form $\pi ^{-1}(E)$ with $E\subset Y$ Borel, and let $\mathcal{P}$ denote the subalgebra of $\mathcal{B}_{\mathcal{V}}$ consisting of all sets of the form $(\Phi \circ p)^{-1}(C)$ with $C\subset P$ Borel.

\medskip

\noindent \textbf{Step 3. The central sequence:} For each $b\in B$, the conjugacy class $b^G$ of $b$ in $G$ is finite, and the map $T_b\colon Z_{\mathcal{V}}\rightarrow b^G$ given by
\[
T_b[(k_nG_n, x_n)]_{\mathcal{V}} \coloneqq \lim _{n\rightarrow \mathcal{V}} k_nbk_n^{-1}
\]
is well-defined, since if $m(b)\in \N$ is the least such that $G_{m(b)}< C_G(b)$ then for all $n\geq m(b)$ the conjugate $k_nbk_n^{-1}$ depends only on the coset $k_nG_n$ of $G_n$.
Letting $(g_mG_m)_{m\in \N} \coloneqq \Phi [(k_nG_n)]_{\mathcal{V}}$, we have $\{ \, n\in \N \mid k_nG_n\subset g_{m(b)}G_{m(b)} \, \} \in \mathcal{V}$ and hence $T_b[(k_nG_n, x_n)]_{\mathcal{V}} = g_{m(b)}bg_{m(b)}^{-1} = \lim _{m\rightarrow\infty}g_mbg_m^{-1}$.
In particular, the map $T_b$ is $\mathcal{P}$-measurable.
We have $T_b (gz)=gT_b(z)g^{-1}$ for all $g\in G$ and $z\in Z_{\mathcal{V}}$.
The map $T_b^\circ \colon Z_{\mathcal{V}}\rightarrow Z_{\mathcal{V}}$ given by $T_b^\circ (z) = \alpha (T_b(z))z$ is an automorphism of $(Z_{\mathcal{V}},\eta _{\mathcal{V}})$ which commutes with $\alpha (g)$ for all $g\in G$.
Then the map $p$ is $T_b^\circ$-invariant, and in particular every set in $\mathcal{P}$ is $T_b^\circ$-invariant.

For each $b\in B$ and $[(k_nG_n, x_n )]_{\mathcal{V}}\in Z_{\mathcal{V}}$, since the set $\{ \, n \in \N \mid T_b[(k_nG_n,x_n)]_{\mathcal{V}} = k_nbk_n^{-1} \, \}$ belongs to $\mathcal{V}$, the transformation $T_{b}^\circ$ is given by
\[
T_{b}^\circ[(k_nG_n, x_n)]_{\mathcal{V}} = [(k_nG_n, \beta _n (v_n(k_nbk_n^{-1}, k_nG_n ))x_n )]_{\mathcal{V}} .
\]
For all large enough $n$, we have $G_n< C_G(b)$, and for such $n$, since $B< G_n$, we have $v_n(k_nbk_n^{-1}, k_nG_n) = v_n(k_n,eG_n)v_n(b,eG_n)v_n(k_n,eG_n)^{-1} = b$.
Since this holds for all large $n$, we obtain
\[
T_{b}^\circ[(k_nG_n, x_n)]_{\mathcal{V}} = [(k_nG_n, \beta _n (b)x_n )]_{\mathcal{V}}.
\]
Also, for all $n$ with $G_n< C_G(b)$, for each $hB\in G_n/B$ we have $b^{-1}hB=hB$ and $w_n(b,b^{-1}hB) = b$, so that $(\beta _n (b)x_n )(hB ) = \beta (b) x_n(hB)$, and therefore
\begin{equation}\label{eqn:equivariant}
\begin{split}
\pi \big( T_b^\circ[(k_nG_n, x_n)]_{\mathcal{V}} \big) & = \lim _{n\rightarrow \mathcal{V}} (\beta _n(b)x_n)(eB) =\lim _{n\rightarrow \mathcal{V}} \beta (b)x_n(eB)\\
&= \beta (b) \pi \big( [(k_nG_n, x_n)]_{\mathcal{V}} \big).
\end{split}
\end{equation}
Let $(b_i)_{i\in \N}$ be a sequence of distinct elements in $B$ with $\beta (b_i)$ converging weakly to the identity element of $Y$.
Then for each Borel subset $E$ of $Y$, we have $\mu _Y (\beta (b_i)E\bigtriangleup E )\rightarrow 0$ as $i\rightarrow \infty$, so it follows from \eqref{eqn:equivariant} that $\eta _{\mathcal{V}} (T_{b_i}^\circ(\pi ^{-1}(E))\bigtriangleup \pi ^{-1}(E)) \rightarrow 0$ as $i\rightarrow \infty$.

Thus both $\mathcal{P}$ and $\mathcal{Y}$ belong to the sigma subalgebra $\mathcal{D}$ of $\mathcal{B}_{\mathcal{V}}$ consisting of all $D\in \mathcal{B}_{\mathcal{V}}$ such that $\lim _{i\rightarrow\infty}\eta _{\mathcal{V}} (T_{b_i}^\circ D\bigtriangleup D ) = 0$.
Since each $T_{b_i}$ commutes with $\alpha (G)$, the sigma algebra $\mathcal{D}$ is $\alpha (G)$-invariant.

\medskip

\noindent \textbf{Step 4. Ensuring essential freeness for the action of $A$ on the upcoming separable quotient:}
We pick $a\in A\setminus \{ e \}$ and let $F_a\subset X_{\mathcal{V}}$ be the fixed point set of $a$ in $X_{\mathcal{V}}$.
Then we have $X_{\mathcal{V}}\setminus F_a = [(C_{a,n})_n]_{\mathcal{V}}$, where $C_{a,n} \coloneqq \{ \, kG_n \in G/G_n \mid akG_n\neq kG_n \, \}$.
We can write the set $C_{a,n}$ as a union of three pairwise disjoint sets $C_{a,n,0}$, $C_{a,n,1}$, $C_{a,n,2}$ such that $aC_{a,n,i}\cap C_{a,n,i} =\emptyset$ (indeed let $C_{a, n, 0}$ be a maximal subset of $C_{a, n}$ such that $aC_{a, n, 0}\cap C_{a, n, 0}=\emptyset$ and set $C_{a, n,1}\coloneqq aC_{a, n, 0}\cap C_{a, n}$ and $C_{a, n, 2}\coloneqq C_{a, n}\setminus (C_{a, n, 0}\cup C_{a, n, 1})$).
Each of the sets $D_{a,i} \coloneqq (\Phi \circ p)^{-1}([(C_{a,n,i})_n]_{\mathcal{V}})$ is $T_b^\circ$-invariant for all $b\in B$ and hence belongs to $\mathcal{D}$.
For $c\in a^G$, we define $F_{a,c}$ as the set of all $[(k_nG_n)]_{\mathcal{V}}\in F_a$ for which $\lim _{n\rightarrow \mathcal{V}}s_n(k_nG_n)^{-1}as_n(k_nG_n) = c$, so that $F_{a,c}$ is a basic measurable subset of $X_{\mathcal{V}}$ corresponding to the sequence of sets $\{ \, kG_n \in G/G_n \mid s_n(kG_n)as_n(kG_n)^{-1} = c \, \}$ with $n\in \N$.
The sets $F_{a,c}$ with $c\in a^G$ partition $F_a$.
Each of the sets $p^{-1}(F_{a,c})$ is $T_b^\circ$-invariant for all $b\in B$ and hence belongs to $\mathcal{D}$.

\medskip

\noindent \textbf{Step 5. Defining the separable quotient of the ultraproduct:}
Since $\mathcal{D}$ is $G$-invariant and both the algebras $\mathcal{P}$ and $\mathcal{Y}$ are countably generated and $G$ is countable, we can find a countably generated $G$-invariant sigma subalgebra $\mathcal{D}_0$ of $\mathcal{D}$ which contains both $\mathcal{P}$ and $\mathcal{Y}$ as well as all of the sets $D_{a,i}$ and $p^{-1}(F_{a,c})$ for $a\in A \setminus \{ e \}$, $c\in a^G$ and $i\in \{ 0,1,2 \}$.
Then we may find a point realization $G\curvearrowright (W_0,\mu _0 )$ for the action of $G$ on the measure algebra $\mathcal{D}_0$, along with a $G$-equivariant measure-preserving map $\varphi \colon (Z_{\mathcal{V}},\eta _{\mathcal{V}} )\rightarrow (W_0,\mu _0)$ which is a point realization of the measure algebra inclusion $\mathcal{D}_0\hookrightarrow \mathcal{B}_{\mathcal{V}}$.
For each $b\in B$, since the map $T_b$ is $\mathcal{P}$-measurable and $\mathcal{P}\subset \mathcal{D}_0$, $T_b$ descends via $\varphi$ to a map $S_b \colon W_0\rightarrow b^G$, which satisfies $S_b(gw)=gS_b(w)g^{-1}$ for all $g\in G$ and $w\in W_0$.
The map $S_b^\circ \colon W_0\rightarrow W_0$ given by $S_b^\circ(w)=S_b(w) w$ is an automorphism of $(W_0,\mu _0)$ with $\varphi \circ T_b^\circ= S_b^\circ \circ \varphi$.
Since $\mathcal{Y}\subset \mathcal{D}_0$ is invariant under the group $\{ \, T_b^\circ \mid b\in B \, \}$, the map $\pi$ descends to a measure-preserving map $\pi _0 \colon (W_0, \mu _0)\rightarrow (Y, \mu _Y)$ with $\pi _0 (S_b^\circ w)=\beta (b)\pi _0 (w)$ for all $b\in B$.
It follows that the group $\{ \, S_b^\circ  \mid b\in B \, \}$ acts essentially freely on $W_0$ since $\beta (B)$ acts freely on $Y$.

Since $\mathcal{D}_0\subset \mathcal{D}$, it follows that $(S_{b_i} )_{i\in \N}$ is a central sequence in the full group of the action $G\curvearrowright (W_0,\mu _0 )$ with $S_{b_i}^\circ w\neq w$ for almost every $w\in W_0$.
However, it is not clear whether this action of $G$ is essentially free, so we take an essentially free action $G/A \curvearrowright (W_1,\mu _1)$ and let $G\curvearrowright (W_0\times W_1, \mu _0 \times \mu _1 )$ be the product action, where $G$ acts on $W_1$ via the projection onto $G/A$.
Then each $S_b\colon W_0\to b^G$ lifts to the map $\tilde{S}_b\colon W_0\times W_1\to b^G$ via the projection from $W_0\times W_1$ onto $W_0$, and it satisfies $\tilde{S}_b(gw)=g\tilde{S}_b(w)g^{-1}$ for all $g\in G$ and $w\in W_0\times W_1$.
The map $\tilde{S}_b^\circ$ is given by $\tilde{S}_b^\circ (w_0,w_1) = S_b(w_0) (w_0,w_1)=(S_b^\circ(w_0), w_1)$ and hence an automorphism of $W_0\times W_1$, and the group $\{ \, \tilde{S}_b^\circ  \mid b\in B \, \}$ acts essentially freely on $W_0\times W_1$. 
Since $A$ acts trivially on $W_1$, it follows that $(\tilde{S}_{b_i})_{i\in \N}$ is a  central sequence in the full group of the action $G\curvearrowright (W_0\times W_1, \mu _0 \times \mu _1 )$, and it satisfies $\tilde{S}_{b_i}^\circ w\neq w$ for almost every $w\in W_0\times W_1$. 

Thus we will be done once we show that the action $G\c (W_0\times W_1, \mu _0 \times \mu _1 )$ is essentially free.
For this, it is enough to show that the action $A\curvearrowright (W_0,\mu _0 )$ is essentially free. 

\medskip

\noindent \textbf{Step 6. Verifying that the action $A\curvearrowright (W_0, \mu _0 )$ is essentially free:}
Fix $a\in A\setminus \{ e\}$.
Suppose that there is some $c\in a^G$ for which the set $F_{a,c}$ has positive measure.
We first show that for almost every $z\in p^{-1}(F_{a,c})$, $\pi (\alpha (a)z)$ and $\pi (z)$ are distinct.
Since $F_{a,c}$ is a subset of $F_a$, if $[(k_nG_n)]_{\mathcal{V}}\in F_{a,c}$ then for $\mathcal{V}$-almost every $n\in \N$, we have $v_n(a,k_nG_n)=s_n(k_nG_n)^{-1}as_n(k_nG_n)= c$ and hence $c\in G_n$.
Since the sequence $(G_n)$ is decreasing, this implies $c\in G_n$ for all $n\in \N$, and hence the element $\beta (w_n(c,c^{-1}B))\in Y$ is well-defined for all $n$.
Let $y_c$ denote the limit along $\mathcal{V}$ of this sequence, $y_c \coloneqq \lim _{n\rightarrow \mathcal{V}}\beta (w_n(c,c^{-1}B)) \in Y$.
For each $z=[(k_nG_n,x_n )]_{\mathcal{V}}\in p ^{-1}(F_{a,c})$, we have $\alpha (a)z = [(k_nG_n, \beta _n (c)x_n )]_{\mathcal{V}}$, and hence
\begin{equation}\label{eqn:yc}
\begin{split}
\pi (\alpha (a)z) &= \lim _{n\rightarrow \mathcal{V}} \beta (w_n(c,c^{-1}B)) x_n(c^{-1}B) = y_c \lim _{n\rightarrow \mathcal{V}} x_n (c^{-1}B) \ \text{ and } \\
 \pi (z) &= \lim _{n\rightarrow \mathcal{V}}x_n(eB).
\end{split}
\end{equation}
To see these are almost surely distinct, we consider the two possibilities of whether $c\in B$ or $c\not\in B$.
If $c\in B$ then $x_n(c^{-1}B)=x_n(eB)$ and $y_c = \lim _{n\rightarrow \mathcal{V}}\beta (w_n(c,B)) = \beta (c) \neq e$, and hence $\pi (\alpha (a)z) = \beta (c) \pi (z)\neq \pi (z)$, as was to be shown.
Suppose now that $c\not\in B$.
By \cite[Proposition 8.4]{bowen2018space}, the map $\pi _c \colon (Z_{\mathcal{V}},\eta _{\mathcal{V}})\rightarrow (Y,\mu _Y)$ defined by $\pi _c [(k_nG_n, x_n )]_{\mathcal{V}}\coloneqq y_c \lim _{n\rightarrow \mathcal{V}} x_n (c^{-1}B)$ is measurable and measure-preserving, and for each Borel subset $E$ of $Y$, we have $\eta _{\mathcal{V}} ( \pi _c^{-1}(E) \bigtriangleup [(\pi _{c,n}^{-1}(E))_n]_{\mathcal{V}} ) = 0$, where the map $\pi _{c,n} \colon (Z_n,\eta _n) \rightarrow (Y, \mu _Y )$ is defined by $\pi _{c,n}(kG_n, x) \coloneqq y_cx(c^{-1}B)$. 
Since $c\not\in B$, the random variables $\pi_n$, $\pi_{c, n}$ are independent for every $n$.
Therefore the random variables $\pi$, $\pi _c$ are also independent.
Since $\mu _Y$ is atomless, it follows that $\pi (z)\neq \pi _c (z)$ for almost every $z\in Z_{\mathcal{V}}$. 
By \eqref{eqn:yc}, for almost every $z\in p^{-1}(F_{a,c})$, we thus have $\pi (\alpha (a)z)= \pi _c (z) \neq \pi (z)$, as was to be shown.

It now follows that $\pi (\alpha (a) z ) \neq \pi (z)$ for almost every $z\in p^{-1}(F_{a})$.
Since $\pi = \pi _0\circ \varphi$ and since each of the sets $p^{-1}(F_a)$ belongs to $\mathcal{D}_0$, it follows that $\pi _0(a w) \neq \pi _0(w)$ and hence $a w\neq w$ for almost every $w\in \varphi (p^{-1}(F_a))$.
In addition, since each of the sets $D_{a,i}$ for $i\in \{ 0,1,2\}$ belongs to $\mathcal{D}_0$, it follows that $a w \neq w$ for almost every $w\in W_0\setminus \varphi (p^{-1}(F_a))$.
This shows that the action of $A$ on $W_0$ is essentially free.



\appendix

\section{A Kazhdan group with prescribed center}\label{sec-app}

Given a countable abelian group $A$, we construct a countable group $G$ with property (T) such that the center of $G$ is isomorphic to $A$.
We rely on the construction of Cornulier \cite{cor} as well as in Examples \ref{ex-prufer} and \ref{ex-caln}.
Let $R\coloneqq \Z[t]$ be the ring of polynomials over $\Z$ in one indeterminate $t$.
In the course of the construction, we will use property (T) of the group $\mathit{SL}_3(R)$ (e.g., \cite[Theorem 1.1]{ejz} and \cite[Theorem 1.8]{mimura}) and property (T) of the pair $(\mathit{SL}_3(R)\ltimes R^3, R^3)$ (\cite[Theorem 1.9, a)]{kas}).
Note that the statements in those papers are given in terms of the group generated by elementary matrices in $\mathit{SL}_3(R)$, which is in fact equal to $\mathit{SL}_3(R)$ by \cite[Corollary 6.6]{suslin}.

Let $H$ be the subgroup of $\mathit{SL}_5(R)$ consisting of matrices of the form
\begin{equation}\label{app-matrix}
g=\begin{pmatrix}
1 & u & c \\
0 & h & v \\
0 & 0 & 1
\end{pmatrix},
\end{equation}
where $h\in \mathit{SL}_3(R)$, $c\in R$, and $u$ and $v$ are row and column vectors of $R^3$, respectively.
Let $C$ be the center of $H$, which consists of matrices $g$ such that $h=I$, $u=0$ and $v=0$.
Then $H/C$ is isomorphic to the semi-direct product $\Gamma \coloneqq \mathit{SL}_3(R)\ltimes (R^3\times R^3)$, where $\mathit{SL}_3(R)$ acts on $R^3\times R^3$ by $h(u, v)=(uh^{-1}, hv)$ for $h\in \mathit{SL}_3(R)$, a row vector $u\in R^3$, and a column vector $v\in R^3$.
In fact, the map sending a matrix $g\in H$ of the form (\ref{app-matrix}) to the element $(h, (u, h^{-1}v))$ of $\Gamma$ induces an isomorphism.

The group $\Gamma$ has property (T).
To see this, recall the following fact:
If $G$ is a countable group and $N$ is a normal subgroup of $G$ such that the group $G/N$ and the pair $(G, N)$ have property (T), then $G$ has property (T) (\cite[Remark 1.7.7]{bhv}).
Property (T) of the group $\mathit{SL}_3(R)$ and the pair $(\mathit{SL}_3(R)\ltimes R^3, R^3)$ thus implies that $\mathit{SL}_3(R)\ltimes R^3$ has property (T).
The group $\Gamma$ is written as the semi-direct product $(\mathit{SL}_3(R)\ltimes R^3)\ltimes R^3$, and the above fact again implies that $\Gamma$ has property (T).

Hence the group $H/C$ has property (T).
The commutator subgroup $[H, H]$ contains $C$, and thus the abelianization $H/[H, H]$ is finite.
It follows from \cite[Theorem 1.7.11]{bhv} that $H$ has property (T).

We obtained the group $H$ with property (T) whose center $C$ is isomorphic to $R$ and to the direct sum $\bigoplus_\N \Z$.
Let $A$ be an arbitrary countable abelian group.
There exists a surjection from $C$ onto $A$.
Let $C_1$ be the kernel of this surjection and set $H_1=H/C_1$.
The group $H_1$ has property (T) and has the central subgroup $C/C_1$ isomorphic to $A$.
In fact, the center of $H_1$ is exactly $C/C_1$ because $H/C\simeq \Gamma$ has trivial center.



\end{document}